\def\ds{\displaystyle}
\def\a{\alpha}
\def\b{\beta}
\newcommand{\bC}{{\bf C}}
\newcommand{\bH}{{\bf H}}
\newcommand{\bS}{{\bf S}}
\newcommand{\bT}{{\bf T}}
\def\R{\mathbb R}
\def\P{\mathcal{P}}
\def\mU{\mathcal{U}}
\def\curl{\mathop{\mathbf{curl}}\nolimits}
\def\H{\mathrm H}
\def\L{\mathrm L}
\def\BK{\textbf{K}}
\def\BM{\textbf{M}}
\def\C{\mathbb C}
\def\div{\mathop{\mathrm{div}}\nolimits}
\def\ba{\boldsymbol{a}}
\def\bu{\boldsymbol{u}}
\def\bv{\boldsymbol{v}}
\def\obv{\overline{\boldsymbol{v}}}
\def\obw{\overline{\boldsymbol{w}}}
\def\bn{\boldsymbol{n}}
\def\bx{\boldsymbol{x}}
\def\bz{\widehat{\bu}}
\def\bf{\boldsymbol{f}}
\def\bg{\boldsymbol{g}}
\def\bT{\boldsymbol{T}}
\def\bB{\boldsymbol{B}}
\def\bC{\boldsymbol{C}}
\def\bId{\boldsymbol{I}}
\def\bS{\boldsymbol{S}}
\def\bH{\boldsymbol{H}}
\def\bU{\boldsymbol{U}}
\def\b0{\boldsymbol{0}}
\def\bphi{\boldsymbol{\phi}}
\def\bpsi{\boldsymbol{\psi}}
\def\hp{\widehat{p}}
\def\bw{\boldsymbol{w}}
\def\mE{\mathcal{E}}
\def\mG{\mathcal{G}}
\def\mH{\mathcal{H}}
\def\mK{\mathcal{K}}
\def\mV{\mathcal{V}}
\renewcommand\sp{\mathop{\mathrm{Sp}}\nolimits}
\newcommand\spd{\mathop{\mathrm{Sp}_\mathrm{disc}}\nolimits}
\newcommand\spe{\mathop{\mathrm{Sp}_\mathrm{ess}}\nolimits}
\def\div{\mathop{\mathrm{div}}\nolimits}
\newtheorem{lmm}{Lemma}[section]
\newtheorem{thrm}{Theorem}[section]
\newtheorem{remark}{Remark}[section]
\newtheorem{problem}{Problem}
\def\LO{\mathrm L^2(\Omega)}
\def\HdivO{{\mathrm H(\div;\Omega)}}
\begin{document}
\title[Acoustic Vibration Problem for dissipative fluids]
{Acoustic Vibration Problem for dissipative fluids}

\author{Felipe Lepe}
\address{CI$^{\mathrm{2}}$MA, Departamento de Ingenier\'{\i}a Matem\'atica,
Universidad de Concepci\'on, Casilla 160-C, Concepci\'on, Chile.}
\email{flepe@ing-mat.udec.cl}
\thanks{The first author was supported by a CONICYT fellowship (Chile).}
\author{Salim Meddahi}
\address{Departamento de Matem\'aticas, Facultad de Ciencias, 
Universidad de Oviedo, Calvo Sotelo s/n, Oviedo, Spain.}
\email{salim@uniovi.es}
\thanks{The second author was supported by
Spain's Ministry of Economy Project MTM2013-43671-P}
\author{David Mora}
\address{Departamento de Matem\'atica, Universidad del B\'io-B\'io,
Casilla 5-C, Concepci\'on, Chile and Centro de Investigaci\'on en
Ingenier\'ia Matem\'atica (CI$^{\mathrm{2}}$MA),
Universidad de Concepci\'on, Concepci\'on, Chile.}
\email{dmora@ubiobio.cl}
\thanks{The third author was partially supported by CONICYT-Chile
through FONDECYT project 1140791 (Chile) and by DIUBB through project 151408 GI/VC,
Universidad del B\'io-B\'io, (Chile)}
\author{Rodolfo Rodr\'{\i}guez}
\address{CI$^{\mathrm{2}}$MA, Departamento de Ingenier\'{\i}a Matem\'atica,  Universidad de
Concepci\'on, Casilla 160-C, Concepci\'on, Chile.}
\email{rodolfo@ing-mat.udec.cl}
\thanks{The fourth author was partially supported by BASAL project CMM,
Universidad de Chile (Chile).}

\subjclass[2000]{Primary 65N25, 65N30, 76M10}

\keywords{Spectral problems, dissipative fluid, finite elements, error estimates.}

\begin{abstract}
In this paper we analyze a finite element method
for solving a quadratic eigenvalue problem
derived from the acoustic vibration problem for a
heterogeneous dissipative fluid. The problem
is shown to be equivalent to the spectral
problem for a noncompact operator and a
thorough spectral characterization is given.
The numerical discretization of the problem is
based on Raviart-Thomas finite elements.
The method is proved to be free of spurious
modes and to converge with optimal order.
Finally, we report numerical tests
which allow us to assess the performance of the method.
\end{abstract}

\maketitle

\section{Introduction}

This paper deals with the numerical approximation of an acoustic
dissipative fluid system. This kind of problem has attracted much
interest, since it is frequently encountered in engineering applications
(\cite{BGHRS,Kinsler,OS}). One typical example is to achieve optimal designs that
reduce noise and vibrations in fluid-structure systems like cars,
aircraft or dams.

Although dissipation is usually neglected in standard acoustics,
modeling this phenomenon is essential to study the effect of noise
reduction techniques. Indeed, in most real situations, damping
mechanisms that transform mechanical energy into heat do exist.
Sometimes these mechanisms are based on surface damping arising from
viscoelastic materials placed on the boundary of the propagation domain.
In these cases, the dissipative effects are typically included in the
model by means of a surface impedance in the boundary conditions (see,
for instance, \cite{BDRS,BR2,BR3}). The present paper addresses damping when it
arises in the propagation media itself due to friction and heat
conduction. A general approach to this topic can be found in the books
by Landau and Lifshitz \cite{LL}, Morse \cite{Morse}, and Pierce \cite{Pierce}, all of
which include extensive bibliographic references on the subject.

This paper focus on computing the (complex) vibration frequencies and
modes of an acoustic dissipative fluid system within a rigid cavity. One
motivation for considering this problem is that it constitutes a
stepping stone towards the more challenging goal of devising numerical
approximations for coupled systems involving fluid-structure interaction
between viscous fluids and solid structures. The natural model for the
fluid system should be based on the Stokes equations for compressible
fluids. However, since in real applications the viscosity is typically
very small, the resulting problem turns out a singular perturbation of
that for an inviscid fluid. This fact leads to a kind of dilemma, since
appropriate finite elements for the Stokes equations introduce spurious
modes in the limit case of a vanishing viscosity, whereas the finite
elements that avoid such spectral pollution fail when applied to the
Stokes equation. 

To circumvent this drawback, we resort to an alternative model based on
a curl-free displacement formulation (see \cite{BDR} for the derivation of a
similar model in the time domain from basic mechanical laws). Let us
remark that in principle the fluid displacement does not need to be
curl-free. However, since the viscosity term due to vorticity is
typically very small, except perhaps near the walls of the enclosure, it
may be neglected in the interior of the enclosure and eventually modeled
as a wall impedance on its boundary (see \cite{OS} for a similar model).

The numerical solution of the vibration problem for
an inviscid acoustic homogeneous fluid is nowadays a well known subject
(see, for instance, \cite{BGHRS}). In its turn, as is shown in Remark 2.1 of the
present paper, the vibration frequencies of a viscous homogeneous
irrotational fluid within a rigid cavity can be algebraically computed
from those of the analogous inviscid fluid and the corresponding
vibration modes coincide. However, this is not the case for a
heterogeneous fluid and this is the reason why we choose this as our
model problem. In particular, we consider the acoustic vibration problem
for a dissipative fluid system that consists of two homogeneous
viscous immiscible fluids contained in a rigid cavity.

We begin with a variational formulation of the
spectral problem relying only on the fluid displacement,
which leads to  a quadratic eigenvalue problem.
For the theoretical analysis, this  is
transformed into an equivalent double-size linear eigenvalue problem.
We introduce a convenient functional framework to
analyze it and prove that the nonlinear eigenvalue
problem is equivalent to the spectral
problem for a nonselfadjoint, noncompact bounded operator.
Thus, the essential spectrum not necessarily reduces to zero
(as is the case for compact operators).
This means that the spectrum may now contain nonzero
eigenvalues of infinite-multiplicity, nonzero accumulation points,
continuous spectrum, etc. Thus, following \cite{KL},
our first task is
to prove that the relevant eigenvalues can be isolated
from the essential spectrum, at least for
sufficiently small values of the viscosity that are realistic in practice.
Then, we propose a conforming discretization
based on Raviart-Thomas finite elements.
By appropriately adapting the abstract spectral
approximation theory for noncompact operators developed
in \cite{DNR1,DNR2}, we establish that the resulting
scheme provides a correct approximation of the spectrum
and prove error estimates for the eigenfunctions and a double
order for the eigenvalues. Moreover,
the discrete quadratic eigenvalue problem is shown
to be equivalent to a well posed generalized eigenvalue
problem which can be solved by
standard eigensolvers like {$\tt{eigs}$} from {\rm MATLAB},
which is based on Arnoldi iterations.

The paper is organized as follows: in Section~\ref{MAIN_PROBLEM},
we introduce the spectral problem and the corresponding variational
formulation, which leads to a quadratic
eigenvalue problem. We introduce an auxiliary unknown
to transform the quadratic eigenvalue problem into a linear one.
Moreover, we introduce the corresponding solution operator for the
spectral problem. In Section~\ref{SPEC_CHAR}, we provide a thorough
spectral characterization of the solution operator, based on the theory
developed in \cite{KL}. We also consider the limit problem
(i.e.,  the case when the viscosity vanishes) and the relation
between the solutions of the dissipative and non-dissipative problems.
In Section~\ref{SPEC_APP}, we introduce a finite element discretization
using Raviart-Thomas elements for both fluids and imposing the continuity
of the corresponding normal components on the interface.
We analyze the discrete spectral problem analogously as in
the continuous case and introduce the corresponding discrete
solution operator. We use the abstract theory from \cite{DNR1} to
prove the convergence. We also prove error estimates for our problem
by adapting the arguments from \cite{BDRS}. Finally,
in Section~\ref{NUMERICOS}, we report some numerical
tests which allow us to asses the performance of the proposed method.

Throughout the paper, $\Omega$ is a generic Lipschitz bounded domain of
$\R^d$ ($d=2,3$), with outer unit normal vector $\bn$. We denote by
$\mathcal{D}(\Omega)$ the space of infinitely smooth function compactly
supported in $\Omega$. For $r\geq 0$, $\left\|\cdot\right\|_{r,\Omega}$ stands indistinctly
for the norm of the Hilbertian Sobolev spaces $\H^r(\Omega)$ or $\H^r(\Omega)^d$ with
the convention $\H^0(\Omega):=\LO$. We also define the Hilbert space
$\HdivO:=\{\bv\in\LO^d:\ \div\bv\in\LO\}$, whose norm is
given by $\left\|\bv\right\|^2_{\div,\Omega}
:=\left\|\bv\right\|_{0,\Omega}^2+\left\|\div\bv\right\|^2_{0,\Omega}$,
and its subspace $\H_0(\div;\Omega):=\left\{\bv\in\HdivO:\ \bv\cdot\bn=0\,\,\textrm{on}\,\,\partial\Omega\right\}$. 
%

Finally, $C$ represents a
generic constant independent of the discretization parameters,
which may take different values at different places.
\setcounter{equation}{0}
\section{The model problem}
\label{MAIN_PROBLEM}
 We take as our model problem the case of two immiscible fluids within a rigid cavity. 
 Let $\Omega_i$ with $i=1,2$ be the polygonal (in the 2D case) or polyhedral (in the 3D case)
  Lipschitz domains
 occupied by each of the fluids.  Let $\rho_i$ be the
 corresponding densities, $\nu_i$ the fluid viscosities, and
 $c_i$ the acoustic speeds, which we consider all
 constant, $\rho_i$ and $c_i$ strictly positive and $\nu_i$ non negative. We denote by $\boldsymbol{n}_i$ the outward unit
 normal vectors corresponding to each subdomain. We define $\Omega:=(\overline{\Omega}_1\cup\overline{\Omega}_2)^{\circ}$, $\Gamma:=\partial\Omega_1\cap\partial\Omega_2$, and 
$\Gamma_i:=\partial\Omega_i\cap\partial \Omega$,  $i=1,2$. 
We assume that each domain $\Omega_i$ as well as $\Omega$ are simply connected  (see Figure~\ref{FIG:dominio}).
\begin{figure}[H]
\begin{center}
\includegraphics[height=5.7cm, width=7cm, angle=0]{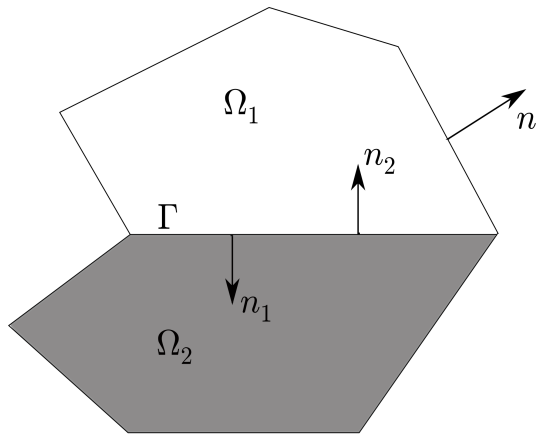}
\caption{2D sketch of the polygonal domains for the fluids.}
\label{FIG:dominio}
\end{center}
\end{figure}
We consider small displacements
of a compressible viscous fluid at rest neglecting convective terms.
The equation of motion derived from the Stokes equation reads
\begin{equation*}
\rho_i\ddot{\bU}_i=2\nu_i\Delta\dot{\bU}_i-\nabla P_i\quad\mbox{in}\ \Omega_i,
\end{equation*} 
where $\bU_i$ denotes the fluid displacement and $P_i$ the pressure
fluctuation in the domain $\Omega_i$,  $i=1,2$. The dot represents
derivation with respect to time. Moreover, since the fluid is
compressible, we consider the isentropic relation                   
 \begin{equation*}                      
P_i+\rho_ic_i^2\div\bU_i=0\qquad\mbox{in}\ \Omega_i.
\end{equation*}

Since we are considering irrotational fluids, we assume that
$\curl\bU_i=\b0$. Hence, considering the identity
$\Delta\dot{\bU_i}=\nabla(\div\dot{\bU_i})
-\curl(\curl\dot{\bU_i})$, we conclude that $\Delta\dot{\bU_i}=\nabla(\div\dot{\bU_i})$. Then, the equations of our model problem are the following:
\begin{align}
\rho_1\ddot{\boldsymbol{U}}_1 - 2\nu_1\nabla(\div\dot{\boldsymbol{U}}_1)+\nabla P_1&=\b0\quad \mbox{in}\ \Omega_1\times (0,T),\label{CONST1}\\
                        P_1 + \rho_1 c_1^2\div\boldsymbol{U}_1&=0\quad  \mbox{in}\ \Omega_1\times [0,T],\label{PRESION1}\\
\rho_2\ddot{\boldsymbol{U}}_2- 2\nu_2\nabla(\div\dot{\boldsymbol{U}}_2)+\nabla P_2&=\b0\quad \mbox{in}\ \Omega_2\times(0,T),\label{CONST2}\\
                        P_2 + \rho_2 c_2^2\div\boldsymbol{U}_2&=0\quad \mbox{in}\ \Omega_2,\times [0,T],\label{PRESION2}\\
                        \boldsymbol{U}_1\cdot\boldsymbol{n}_1+\boldsymbol{U}_2\cdot\boldsymbol{n}_2&=0\quad \mbox{on}\ \Gamma\times [0,T],\\
                        (2\nu_1\div\dot{\bU}_1+P_1)-(2\nu_2\div\dot{\bU}_2+P_2)&=0\quad \mbox{on}\ \Gamma\times (0,T),\\
                        \boldsymbol{U}_1\cdot\boldsymbol{n}_1&=0\quad \mbox{on}\hspace{0.1cm}\Gamma_1\times (0,T),\\
                        \boldsymbol{U}_2\cdot\boldsymbol{n}_2&=0\quad \mbox{on}\ \Gamma_2\times (0,T)\label{boundary2}.
       \end{align}
Let us remark that a similar argument leads exactly
to the same equations in 2D.

Multiplying  equations \eqref{CONST1} and \eqref{CONST2}
by a test function
$\bv\in\H_0(\div;\Omega)$, 
integrating by parts,
and using the boundary conditions and the transmission
conditions on $\Gamma$, we obtain
\begin{equation}\label{problem1_1}
\ds\int_{\Omega} \rho\ddot{\boldsymbol{U}}\cdot \bv+2\int_{\Omega}\nu\div\dot{\boldsymbol{U}}\div \bv-\int_{\Omega}P\div \bv=0\quad\forall \bv\in \H_0(\div,\Omega),
\end{equation}
where
\begin{equation*}
\bU := \left\{
\begin{array}{c l}
 \bU_1 & \mbox{in}\ \Omega_1,\\
 \bU_2 & \mbox{in}\ \Omega_2,
 \end{array}
\right.
\hspace{0.2cm}
P := \left\{
\begin{array}{c l}
 P_1 & \mbox{in}\ \Omega_1,\\
 P_2 & \mbox{in}\ \Omega_2,
 \end{array}
\right.
\hspace{0.2cm}
\nu := \left\{
\begin{array}{c l}
 \nu_1 & \mbox{in}\ \Omega_1,\\
 \nu_2 & \mbox{in}\ \Omega_2,
 \end{array}
\right.
\end{equation*}
\begin{equation*}
\rho := \left\{
\begin{array}{c l}
 \rho_1 & \mbox{in}\ \Omega_1,\\
 \rho_2 & \mbox{in}\ \Omega_2,
 \end{array}
\right.
\hspace{0.2cm}\mbox{and}\hspace{0.2cm}
c:= \left\{
\begin{array}{c l}
 c_1 & \mbox{in}\ \Omega_1,\\
 c_2 & \mbox{in}\ \Omega_2.
 \end{array}
\right.
\end{equation*}

Using \eqref{PRESION1} and \eqref{PRESION2} we  eliminate
$P$ in \eqref{problem1_1} and write
\begin{equation}\label{problem_rewr}
\ds\int_{\Omega} \rho\ddot{\boldsymbol{U}}\cdot \bv+2\int_{\Omega}\nu\div\dot{\boldsymbol{U}}\div \bv+\int_{\Omega}\rho c^2\div \boldsymbol{U}\div \bv=0\qquad\forall \bv\in \H_0(\div,\Omega).
\end{equation}

The \emph{vibration modes} of this problem are complex solutions
of the form $\boldsymbol{U}(\boldsymbol{x},t)=e^{\lambda t}\bu(\boldsymbol{x})$ with
$\lambda\in\mathbb{C}$. Looking for this kind of solutions leads to the following quadratic
eigenvalue problem:
\begin{problem}\label{PROBLEM1}
Find $\lambda\in\mathbb{C}$ and $\b0\ne\bu\in \H_0(\div;\Omega)$ such that
\begin{equation*}
\ds\lambda^2\int_{\Omega} \rho\bu\cdot\overline{\bv}+2\lambda\int_{\Omega}\nu\div\bu\div\overline{\bv}+\int_{\Omega}\rho c^2\div\bu\div\overline{\bv}=0\qquad\forall \bv\in \H_0(\div;\Omega).
\end{equation*}
\end{problem}

Let us remark that in absence of viscosity (i.e., $\nu=0$)
we are left with the free vibration problem of two inviscid fluids in contact (whose numerical approximation has not been analyzed either).
The eigenvalues $\lambda^2$ of such a problem are negative real numbers
(as will be proved below), so that $\lambda$ are purely imaginary,
namely, $\lambda=\pm i\omega$ with $\omega$ being the so called \textit{natural vibration
frequencies}, which correspond to periodic in time solutions
$\bU(\boldsymbol{x},t)=e^{-i\omega t}\bu({\boldsymbol{x}})$ of the time domain problem.
This is the reason why, for $\nu=0$, Problem~\ref{PROBLEM1}
is usually written as follows: Find $\omega>0$ and
$\b0\ne\bu\in\H_0(\div; \Omega)$ such that 
\begin{equation}\label{PROB_LIMITE}
\ds\int_{\Omega}\rho c^2\div\bu\div\overline{\bv}=
\omega^2\int_{\Omega}\rho\bu\cdot\overline{\bv}\qquad\forall\bv\in\H_0(\div;\Omega).
\end{equation}

In the applications, $\nu$ is typically very small.
As we will show below, in such a case there are eigenvalues
$\lambda$ of Problem~\ref{PROBLEM1} that lie close to
$\pm i\omega$ with $\omega$ being a natural vibration frequency
(i.e.,  a solution of \eqref{PROB_LIMITE}). Actually,
we will prove below that those $\lambda$ converge to
$\pm i\omega$ as $\|\nu\|_{\infty,\Omega}$ goes to zero.
On solving Problem \ref{PROBLEM1}, the aim is to
compute the eigenvalues $\lambda$ close to the smallest natural
vibration frequencies $\omega>0$, which are the most relevant in the applications.
\begin{remark}\label{ec_algebraica}
In the case of a homogeneous viscous fluid, $\rho$,
$c$ and $\nu$ are constant in the whole $\Omega$.
Then, Problem~\ref{PROBLEM1} can be written as
\begin{equation*}
\ds\lambda^2\int_{\Omega}\rho\bu\cdot\obv+\frac{2\lambda\nu+\rho c^2}{\rho c^2}\int_{\Omega}\rho c^2\div\bu\div\obv=0\qquad\forall\bv\in\H_0(\div,\Omega).
\end{equation*}
Hence, in such a case, $(\lambda,\bu)$ is an eigenpair of Problem~\ref{PROBLEM1} if and only if $-\frac{\lambda^2\rho c^2}{2\lambda\nu+\rho c^2}=\omega^2$
with $(\omega,\bu)$ being a solution to problem~\eqref{PROB_LIMITE}. Therefore,
for a homogeneous viscous fluid, $\lambda$ can be algebraically computed from the solution of \eqref{PROB_LIMITE} as follows: 
\begin{equation*}
\ds\lambda=\frac{-\nu\omega^2\pm\sqrt{\nu^2\omega^4-\rho^2c^4\omega^2}}{\rho c^2}.
\end{equation*}
\end{remark}

We denote $\mH:=\L^2(\Omega)^d$ endowed with the
weighted inner product
\begin{equation*}
\ds (\bv,\bw)_{\mH}:=\int_{\Omega}\rho\bv\cdot\obw
\end{equation*}
and $\mV:=\H_0(\div;\Omega)$ with the inner product
\begin{equation*}
\ds(\bv,\bw)_{\mV}
:=\int_{\Omega}\rho\bv\cdot\obw+\int_{\Omega}\rho c^2\div\bv\div\obw.
\end{equation*}
Notice that the inner products in $\mH$ and $\mV$ induce
norms $\|\cdot\|_{\mH}$ and $\|\cdot\|_{\mV}$ on each of these spaces equivalent to the classical
$\L^2(\Omega)^d$ and $\H(\div;\Omega)$ norms, respectively.
Therefore, when it might be convenient, we will use these classical norms.

Clearly $\lambda=0$ is an eigenvalue of Problem \ref{PROBLEM1}
with associated eigenspace
\begin{equation*}\label{eigenspace1}
\mathcal{K}=\H_0(\div^0,\Omega):=
\left\{\bv\in\H_0(\div;\Omega):\ \div\bv=0\,\,\textrm{in}\,\,\Omega\right\}.
\end{equation*}
We define:
$$\mG:=\mK^{\bot_{\mV}}=\{\bv\in \mV: (\bv,\bw)_{\mV}=0\quad\forall\bw\in\mK\}.$$
Since $\mK$ is a closed subspace of $\mV$, clearly $\mV=\mG\oplus\mK.$
Notice that $\mG$ and $\mK$ are also orthogonal in the $\mH$ inner product.
Hence,
\begin{equation*}
\mG=\{\bv\in\mV: (\bv,\bw)_{\mH}=0\hspace{0.2cm}\forall\bw\in\mK\}.
\end{equation*}

The following result brings a characterization of the space $\mG$.
\begin{lmm}\label{G}
There holds
$$\mG=\ds\frac{1}{\rho}\nabla(\H^1(\Omega))\cap\mV.$$
\end{lmm}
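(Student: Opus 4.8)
The plan is to prove the two set inclusions separately, starting from the reformulation already recorded above: $\bv\in\mG$ precisely when $\bv\in\mV$ and $(\bv,\bw)_{\mH}=\int_{\Omega}\rho\,\bv\cdot\obw=0$ for every $\bw\in\mK=\H_0(\div^0,\Omega)$. So it suffices to identify the elements of $\mV$ that are $\mH$-orthogonal to every divergence-free field of $\mV$.

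For the inclusion $\frac1\rho\nabla(\H^1(\Omega))\cap\mV\subseteq\mG$ I would take $\bv=\frac1\rho\nabla\phi\in\mV$ with $\phi\in\H^1(\Omega)$ and test against an arbitrary $\bw\in\mK$. Green's formula in $\H(\div;\Omega)$ gives
\[
\int_{\Omega}\rho\,\bv\cdot\obw=\int_{\Omega}\nabla\phi\cdot\obw=-\int_{\Omega}\phi\,\div\obw+\langle\obw\cdot\bn,\phi\rangle_{\partial\Omega}=0,
\]
since $\div\bw=0$ in $\Omega$ and $\bw\cdot\bn=0$ on $\partial\Omega$; hence $\bv\in\mG$. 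For the reverse inclusion I would start from $\bv\in\mG$ and note that every $\bphi\in\mathcal{D}(\Omega)^d$ with $\div\bphi=0$ lies in $\mK$, so $\rho\,\bv\in\L^2(\Omega)^d$ annihilates all divergence-free test fields. By the classical consequence of de Rham's theorem characterizing the polar set of the solenoidal test fields, this forces $\rho\,\bv=\nabla\phi$ in $\mathcal{D}'(\Omega)$ for some $\phi\in\L^2_{\mathrm{loc}}(\Omega)$; since $\Omega$ is bounded, connected and Lipschitz, $\nabla\phi=\rho\,\bv\in\L^2(\Omega)^d$ then upgrades $\phi$ to $\H^1(\Omega)$ (up to an additive constant) via Ne\v{c}as' inequality. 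Thus $\bv=\frac1\rho\nabla\phi$ with $\phi\in\H^1(\Omega)$, and because $\bv\in\mV$ by hypothesis, $\bv\in\frac1\rho\nabla(\H^1(\Omega))\cap\mV$, which closes the argument.

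The hard part will be the de Rham step: the identification of an $\L^2(\Omega)^d$ field orthogonal to the solenoidal elements of $\mathcal{D}(\Omega)^d$ with a distributional gradient, together with the regularity upgrade from $\L^2_{\mathrm{loc}}(\Omega)$ to $\H^1(\Omega)$ on the bounded connected Lipschitz domain $\Omega$. Everything else is routine integration by parts and bookkeeping; in particular, only the smooth, compactly supported, divergence-free members of $\mK$ are actually used, and the normal-trace condition enters merely through membership in $\mV=\H_0(\div;\Omega)$.
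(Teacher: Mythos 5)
Your proof is correct, and the reverse inclusion ($\frac{1}{\rho}\nabla(\H^1(\Omega))\cap\mV\subseteq\mG$) is exactly the paper's argument: Green's formula in $\HdivO$ plus $\div\bw=0$ and $\bw\cdot\bn=0$. For the forward inclusion, however, you take a genuinely different route. The paper tests the orthogonality relation only against fields of the form $\curl\bpsi$ with $\bpsi\in\mathcal{D}(\Omega)^d$ (which lie in $\mK$), integrates by parts to conclude $\curl(\rho\bv)=\b0$ in $\mathcal{D}'(\Omega)$, and then invokes the simple connectedness of $\Omega$ --- an assumption made explicitly in Section 2 --- to produce a potential $\varphi\in\H^1(\Omega)$ with $\rho\bv=\nabla\varphi$. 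You instead test against all solenoidal members of $\mathcal{D}(\Omega)^d$ and appeal to the de~Rham characterization of the polar set of divergence-free test fields, followed by the Ne\v{c}as-type upgrade from $\L^2_{\mathrm{loc}}$ to $\H^1(\Omega)$ on a bounded connected Lipschitz domain. Both arguments are sound. The paper's version is shorter and uses only the elementary ``curl-free on a simply connected domain implies gradient'' fact, at the price of leaning on the topological hypothesis; yours invokes heavier machinery (de~Rham plus the regularity upgrade) but works on any bounded connected Lipschitz domain, so it would survive if the simple-connectedness assumption were dropped. Since the paper does assume $\Omega$ simply connected, neither approach gains anything decisive here, but your observation that only the compactly supported solenoidal elements of $\mK$ are actually needed is accurate and mirrors the paper's own economy (it likewise uses only the curls of test fields).
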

\begin{proof}
We will prove this result by checking the double inclusion.
Let $\bv\in \mG.$ Then, for all $\bpsi\in\mathcal{D}(\Omega)^d$,
since $\curl\bpsi\in\mK$, we have
\begin{equation*}
\ds 0=\int_{\Omega}\rho \curl\bpsi\cdot \overline{\bv}=
\int_{\Omega}\bpsi\cdot\curl\left(\rho \overline{\bv}\right).
\end{equation*}
Thus, $\curl\left(\rho \bv\right)=\b0$ in $\Omega$. Since $\Omega$ is simply connected, this implies that
there exists $\varphi\in\H^1(\Omega)$ such that
$\rho\bv=\nabla\varphi$. Hence, $\bv\in\frac{1}{\rho}\nabla(\H^1(\Omega))\cap\mV$.
Conversely, let $\bv\in\frac{1}{\rho}\nabla (\H^1(\Omega))\cap\mV$
and $\bw\in\mK$. Let $\varphi\in \H^1(\Omega)$ be such that
$\bv=\frac{1}{\rho}\nabla\varphi$. Then,
\begin{eqnarray*}
\ds(\bv,\bw)_{\mH}=\ds\int_{\Omega}\rho\left(\frac{1}{\rho}\nabla\varphi\right)
\cdot\overline{\bw}=-\ds\int_{\Omega}\varphi\div\overline{\bw}+\int_{\partial\Omega}\varphi(\overline{\bw}\cdot\boldsymbol{n})=0.
\end{eqnarray*} 
Therefore, $\bv\in\mG$. The proof is complete. 
\end{proof}

In what follows we prove additional regularity for the functions in $\mG$. {}From now and on, $s$ will denote a positive number such that the following lemma holds true.
\begin{lmm}\label{REGULARIDAD_DE_U}
There exists $s>0$ (with $s$ depending on $\rho$ and $\Omega$)
such that $\bv\in\H^s(\Omega)^d$ for all  $\bv\in\mG$ and 
\begin{equation}\label{estima}
\|\bv\|_{s,\Omega}\leq C\|\div\bv\|_{0,\Omega},
\end{equation}
where $C$ is a positive constant independent of $\bv$.
\end{lmm}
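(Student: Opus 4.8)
The plan is to exploit the characterization from Lemma~\ref{G}, namely $\mG=\frac{1}{\rho}\nabla(\H^1(\Omega))\cap\mV$, so that every $\bv\in\mG$ can be written as $\bv=\frac{1}{\rho}\nabla\varphi$ with $\varphi\in\H^1(\Omega)$, and then recast the regularity question as an elliptic regularity statement for $\varphi$. First I would observe that, since $\bv\in\mV=\H_0(\div;\Omega)$, we have $\div\bv=\div\!\left(\frac{1}{\rho}\nabla\varphi\right)=:g\in\LO$ together with the boundary condition $\bv\cdot\bn=\frac{1}{\rho}\partial_{\bn}\varphi=0$ on $\partial\Omega$. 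In other words, $\varphi$ is (up to an additive constant, fixed by requiring zero mean) the unique solution of the Neumann problem
\begin{equation*}
-\div\!\left(\tfrac{1}{\rho}\nabla\varphi\right)=-g\quad\text{in }\Omega,\qquad \tfrac{1}{\rho}\partial_{\bn}\varphi=0\quad\text{on }\partial\Omega,
\end{equation*}
with right-hand side in $\LO$. Note $\rho$ is piecewise constant with a jump across $\Gamma$, so this is a transmission (interface) problem; the natural transmission conditions — continuity of $\varphi$ and of the conormal derivative $\frac{1}{\rho}\partial_{\bn}\varphi$ across $\Gamma$ — are exactly encoded in $\varphi\in\H^1(\Omega)$ and $\frac{1}{\rho}\nabla\varphi\in\H(\div;\Omega)$.

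Next I would invoke the known $\H^{1+s}$ regularity for such scalar elliptic transmission problems on Lipschitz polygonal/polyhedral domains with piecewise-constant coefficients: there exists $s>0$, depending only on $\Omega$, on the interface geometry, and on the jump of $\rho$, such that the solution satisfies $\varphi\in\H^{1+s}(\Omega)$ with
\begin{equation*}
\|\varphi\|_{1+s,\Omega}\leq C\,\|g\|_{0,\Omega}.
\end{equation*}
(For $d=2$ this is classical — Grisvard-type results and the corner/interface analysis of Kellogg, Nicaise, Petzoltd, etc.; for $d=3$ one likewise has a positive Sobolev exponent for the Laplace–Neumann problem on Lipschitz domains together with interface regularity results.) Since the paper only needs the \emph{existence} of some such $s>0$ and does not quantify it, citing this body of results suffices. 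Then $\bv=\frac{1}{\rho}\nabla\varphi$ inherits $\H^s(\Omega)^d$ regularity, with $\rho$ piecewise constant being harmless, and
\begin{equation*}
\|\bv\|_{s,\Omega}\leq C\,\|\varphi\|_{1+s,\Omega}\leq C\,\|g\|_{0,\Omega}=C\,\|\div\bv\|_{0,\Omega},
\end{equation*}
which is precisely \eqref{estima}.

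The main obstacle, and the only genuinely non-routine point, is pinning down the correct regularity statement for the scalar transmission problem: one must be careful that $s$ may be limited both by the re-entrant corners/edges of $\partial\Omega$ and by the geometry of the interface $\Gamma$ where it meets $\partial\Omega$ or itself, and that the admissible exponent depends on the coefficient jump $\rho_1/\rho_2$. It is therefore essential that the statement of the lemma only asserts the existence of some $s>0$ and that $s$ is allowed to depend on $\rho$ and $\Omega$; with that in place, the remaining steps (the Neumann reformulation, reading off the transmission conditions from membership in $\H(\div;\Omega)$, and transferring the estimate from $\varphi$ to $\bv$) are straightforward. A minor technical point to handle is the mean-value normalization of $\varphi$: the gradient $\nabla\varphi$ is unaffected by the additive constant, so one simply picks the zero-mean representative to make the Neumann solution operator bounded from $\LO$ into $\H^{1+s}(\Omega)/\mathbb{R}$.
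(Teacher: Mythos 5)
Your proposal is correct and follows essentially the same route as the paper: write $\bv=\frac{1}{\rho}\nabla\varphi$ via Lemma~\ref{G}, identify $\varphi$ as the solution of the Neumann transmission problem with piecewise-constant coefficient $\frac{1}{\rho}$, and invoke Petzoldt-type $\H^{1+s}$ interface regularity to conclude. The paper does exactly this, citing specific lemmas from Petzoldt's thesis (and even giving an explicit admissible value of $s$ in 3D), so no further comparison is needed.
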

\begin{proof}
According to Lemma~\ref{G}, there exists $\varphi \in\H^1(\Omega)$
such that $\bv=\frac{1}{\rho}\nabla\varphi$. Consequently,
$\varphi \in\H^1(\Omega)/ \mathbb{C}$ is the unique solution
of the following well-posed Neumann problem:
\begin{align*}
 \ds\div\left(\frac{1}{\rho}\nabla\varphi\right)&=\div\bv\quad\mbox{in}\ \Omega,\\
   \ds\frac{1}{\rho}\frac{\partial\varphi}{\partial\bn}&=0\hspace{0.2cm}\qquad\mbox{on}\ \partial\Omega.
 \end{align*}
Hence, in the 3D case, the theorem follows from  \cite[Lemma~2.20]{PETZ2} with 
$$
s:=\frac{1}{2\pi} \min\left\{\min_{\Omega}\{\rho\},
\frac{1}{\ds\max_{\Omega}\{\rho\}}\right\}>0.
$$
For the 2D case, the theorem follows by applying \cite[Lemma~4.3]{PETZ2}. (See \cite{PETZ1} for more details.)
\end{proof}

{}From the physical point of view, the time domain problem~\eqref{problem_rewr}
is dissipative in the sense that its solution should decay as $t$ increases.
The latter happens if and only if the so called \textit{decay rate}, $\mbox{Re}(\lambda)$,
is negative. The following result shows that this is the case in our formulation.
\begin{lmm}
\label{NEGATIVERATE}
Let $(\lambda,\bu)\in\mathbb{C}\times\mV$ be a solution of
Problem~\ref{PROBLEM1}. If $\lambda\ne 0$, then $\mbox{Re}(\lambda)<0.$
\end{lmm}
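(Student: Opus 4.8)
The natural approach is to test Problem~\ref{PROBLEM1} with $\bv = \bu$ itself and extract information from the real part of the resulting scalar identity. Setting $A := \int_\Omega \rho\,|\bu|^2 > 0$, $B := \int_\Omega \nu\,|\div\bu|^2 \ge 0$, and $D := \int_\Omega \rho c^2\,|\div\bu|^2 \ge 0$, the variational equation becomes the scalar quadratic relation $\lambda^2 A + 2\lambda B + D = 0$. Note $A > 0$ since $\bu \ne \b0$, while $B$ and $D$ are nonnegative and, because $\nu$ and $\rho c^2$ are both bounded below away from zero where they matter (in fact $\rho c^2 > 0$ uniformly), one has $B = 0 \iff D = 0 \iff \div\bu = 0$ in $\Omega$.

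First I would dispose of the degenerate case: if $\div\bu = 0$, then $B = D = 0$ and the relation reads $\lambda^2 A = 0$, forcing $\lambda = 0$, contrary to hypothesis. Hence we may assume $\div\bu \not\equiv 0$, so that $B > 0$ and $D > 0$. Solving the quadratic gives
\begin{equation*}
\lambda = \frac{-B \pm \sqrt{B^2 - AD}}{A}.
\end{equation*}
Now I would split into two cases according to the sign of the discriminant $B^2 - AD$. If $B^2 - AD \le 0$, then $\sqrt{B^2-AD}$ is purely imaginary (or zero), so $\mathrm{Re}(\lambda) = -B/A < 0$ immediately. If $B^2 - AD > 0$, then both roots are real; the root with the $+$ sign satisfies $\lambda = (-B + \sqrt{B^2-AD})/A$, and since $\sqrt{B^2 - AD} < \sqrt{B^2} = B$ (strictly, because $AD > 0$), we get $\lambda < 0$; the root with the $-$ sign is even more negative. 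In all cases $\mathrm{Re}(\lambda) < 0$.

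The only genuinely delicate point is justifying that $B > 0$ and $D > 0$ whenever $\div\bu \not\equiv 0$, i.e.\ that neither $\nu$ nor $\rho c^2$ vanishes on the support of $\div\bu$ in a way that would kill the integral. For $D$ this is immediate since $\rho c^2 \ge \min_i \rho_i c_i^2 > 0$ throughout $\Omega$. For $B$ one must use that $\nu_i \ge 0$ with at least one $\nu_i > 0$ — or, more carefully, observe that the argument only needs $B > 0$ \emph{or} the discriminant to be nonpositive; if $\nu \equiv 0$ (both fluids inviscid) then $B = 0$ and the quadratic is $\lambda^2 A + D = 0$ with $A, D > 0$, giving $\lambda$ purely imaginary and nonzero, so $\mathrm{Re}(\lambda) = 0$ — which would contradict the statement. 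Thus the statement implicitly assumes $\nu \not\equiv 0$; I would either add that hypothesis or note that the case $B = 0$, $D \ne 0$ cannot arise under the paper's standing assumption that the fluids are genuinely viscous. Modulo this bookkeeping, the proof is the elementary estimate on the roots of the scalar quadratic above.
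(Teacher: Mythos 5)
Your proposal is correct and follows essentially the same route as the paper: test with $\bv=\bu$, reduce to the scalar quadratic $A\lambda^2+B\lambda+C=0$, and read off the sign of $\mathrm{Re}(\lambda)$ from the roots. Your caveat about the case $B=0$ with $\div\bu\not\equiv 0$ (e.g.\ $\nu\equiv 0$, where the roots are purely imaginary) is well taken --- the paper's own proof simply asserts $B>0$ whenever $\lambda\neq 0$, which implicitly uses that $\nu$ is positive where $\div\bu$ does not vanish, so you have in fact made explicit a hypothesis the paper leaves tacit.
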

\begin{proof}
 Testing Problem \ref{PROBLEM1} with $\bv=\bu$, we have that $A\lambda^2+B\lambda+C=0$, with
 $$A:=\ds\int_{\Omega}\rho|\bu|^2,\quad B:=2\int_{\Omega}\nu|\div\bu|^2,\quad\mbox{and}
 \quad C:=\int_{\Omega}\rho c^2|\div\bu|^2.$$
We observe that $A>0$,  $B\geq 0$, and $C \geq 0$. Moreover,
since $\lambda=0$ if and only if $\bu\in\mK=\H_0(\div^0,\Omega)$,
for $\lambda\ne 0$ we have that $B,C>0$, too.
Therefore, since 
$\lambda=\frac{-B\pm\sqrt{B^2-4AC}}{2A}$,  it is immediate to check that $\mbox{Re}(\lambda)<0$.
\end{proof}
\begin{remark}\label{distintodecero}
 Any eigenpair $(\lambda,\bu)$ of Problem~\ref{PROBLEM1} satisfies
\begin{equation*}
\ds\lambda^2\int_{\Omega}\rho\bu\cdot\overline{\bv}
+\int_{\Omega}(2\lambda\nu+\rho c^2)\div\bu\div\overline{\bv}=0\qquad\forall \bv\in\mV.
\end{equation*}
Since the coefficients
are constant in each subdomain, if $2\lambda\nu+\rho c^2\ne 0$ in $\Omega_i$, by testing with
$\bv\in\mathcal{D}(\Omega_i)^d$ we obtain that
$\div\bu|_{\Omega_i}\in \H^1(\Omega_i)$,  $i=1,2$.
On the other hand, if $2\lambda\nu+\rho c^2=0$ in
$\Omega_i$ ($i=1$ or $2$), then, for $\lambda\ne0$, by testing
again with $\bv\in\mathcal{D}(\Omega_i)^d$, we obtain
that $\bu=\b0$ in $\Omega_i$. Thus, in any case,
$\div\bu|_{\Omega_i}\in\H^1(\Omega_i)$,  $i=1,2.$
\end{remark}

For the theoretical analysis it is convenient
to transform Problem \ref{PROBLEM1} into a
linear eigenvalue problem. With this aim
we introduce the new variable $\bz:=\lambda\bu$, as usual
in quadratic problems, and the space $\widetilde{\mV}:=\mV\times\mH$
endowed with the corresponding product norm,
which carry us to the following:
\begin{problem}\label{PROBLEM2}
 Find $\lambda\in\mathbb{C}$ and $\b0\ne(\bu,\bz)\in \widetilde{\mV}$ such that
\begin{align}
\ds&\int_{\Omega}\rho c^2\div\bu\div \overline{\bv}=
\lambda\left(-2\int_{\Omega}\nu\div\bu\div\overline{\bv}-
\int_{\Omega}\rho\bz\cdot\overline{\bv}\right)\qquad\forall \bv\in\mV\label{linear1}, \\
\ds&\int_{\Omega}\rho\bz\cdot\overline{\widehat{\bv}}=
\lambda\int_{\Omega}\rho\bu\cdot\overline{\widehat{\bv}}\qquad\forall\widehat{\bv}\in \mH.\label{linear2}
\end{align}
\end{problem}
We observe that $\lambda=0$ is an eigenvalue of Problem~\ref{PROBLEM2}
and its associated eigenspace is $\widetilde{\mK}:=\mK\times\{0\}$.
Let $\widetilde{\mG}$ be the orthogonal complement of $\widetilde{\mK}$
in $\mV\times\mH$. Notice that $\widetilde{\mG}=\mG\times\mH$.

We introduce the sesquilinear continuous form
$a:\mV\times\mV\rightarrow\mathbb{C}$ defined by
$$a(\bu,\bv):=\ds\int_{\Omega}\rho c^2\div\bu\div\overline{\bv},$$
and the sesquilinear continuous
forms $\widetilde{a}, \widetilde{b}:\widetilde{\mV}\rightarrow\widetilde{\mV}$
 defined as follows:
$$\widetilde{a}((\bu,\bz),(\bv, \widehat{\bv})):=\int_{\Omega}\rho c^2\div\bu\div\overline{\bv}+\int_{\Omega}\rho\bz\cdot\overline{\widehat{\bv}},$$
$$\widetilde{b}((\bu,\bz),(\bv, \widehat{\bv})):=\ds-2\int_{\Omega}\nu\div\bu\div\overline{\bv}-\int_{\Omega}\rho\bz\cdot\overline{\bv}+
\int_{\Omega}\rho\bu\cdot\overline{\widehat{\bv}}.$$
In what follows we  prove that $a(\cdot,\cdot)$ and $\widetilde{a}(\cdot,\cdot)$
are elliptic in  $\mG$ and $\widetilde{\mG}$, respectively.

\begin{lmm}\label{elipticidad_continua}
The sesquilinear form $a: \mG\times\mG\rightarrow\mathbb{C}$
is $\mG$-elliptic and, consequently,
$\widetilde{a}:\widetilde{\mG}\times\widetilde{\mG}\rightarrow\mathbb{C}$
is $\widetilde{\mG}$-elliptic.
\end{lmm}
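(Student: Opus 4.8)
The plan is to reduce the $\mG$-ellipticity of $a$ to the Poincaré-type inequality already established in Lemma~\ref{REGULARIDAD_DE_U}, and then to obtain the $\widetilde{\mG}$-ellipticity of $\widetilde a$ by a one-line block estimate.

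First I would observe that for $\bu\in\mG$ the quantity $a(\bu,\bu)=\int_{\Omega}\rho c^2|\div\bu|^2$ is real and nonnegative, and that
\[
\|\bu\|_{\mV}^2=\int_{\Omega}\rho|\bu|^2+\int_{\Omega}\rho c^2|\div\bu|^2
\le\Big(\max_{\Omega}\rho\Big)\|\bu\|_{0,\Omega}^2+\int_{\Omega}\rho c^2|\div\bu|^2 .
\]
Hence it suffices to control $\|\bu\|_{0,\Omega}$ by $\|\div\bu\|_{0,\Omega}$ on $\mG$, which is exactly the content of Lemma~\ref{REGULARIDAD_DE_U}: $\|\bu\|_{0,\Omega}\le\|\bu\|_{s,\Omega}\le C\|\div\bu\|_{0,\Omega}$. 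Combining this with the trivial bound $\|\div\bu\|_{0,\Omega}^2\le\big(\min_{\Omega}\rho c^2\big)^{-1}\int_{\Omega}\rho c^2|\div\bu|^2$ yields $\|\bu\|_{\mV}^2\le\widehat C\,a(\bu,\bu)$ for all $\bu\in\mG$, i.e.\ $a$ is $\mG$-elliptic with constant $\alpha:=1/\widehat C>0$. (Equivalently, one may argue by contradiction: a sequence $\bu_n\in\mG$ with $\|\bu_n\|_{\mV}=1$ and $a(\bu_n,\bu_n)\to0$ has $\div\bu_n\to0$ in $\LO$; the embedding $\mG\hookrightarrow\H^s(\Omega)^d$ of Lemma~\ref{REGULARIDAD_DE_U}, together with Rellich's theorem and the closedness of $\mG$ in $\mV$, forces a subsequence of $\bu_n$ to converge in $\mV$ to some $\bu\in\mG$ with $\div\bu=0$, hence $\bu\in\mG\cap\mK=\{\b0\}$, contradicting $\|\bu\|_{\mV}=1$.)

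For the second assertion, for $(\bu,\bz)\in\widetilde{\mG}=\mG\times\mH$ the definition of $\widetilde a$ gives
\[
\widetilde a\big((\bu,\bz),(\bu,\bz)\big)=\int_{\Omega}\rho c^2|\div\bu|^2+\int_{\Omega}\rho|\bz|^2=a(\bu,\bu)+\|\bz\|_{\mH}^2 ,
\]
so, using the $\mG$-ellipticity just proved, $\widetilde a\big((\bu,\bz),(\bu,\bz)\big)\ge\alpha\|\bu\|_{\mV}^2+\|\bz\|_{\mH}^2\ge\min\{\alpha,1\}\,\|(\bu,\bz)\|_{\widetilde{\mV}}^2$, which is the claimed $\widetilde{\mG}$-ellipticity.

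The only substantive point is the Poincaré-type estimate on $\mG$; everything else is elementary bookkeeping. I do not expect a genuine obstacle here, since that estimate has already been secured in Lemma~\ref{REGULARIDAD_DE_U} (where the simple-connectedness of $\Omega$, via the representation $\mG=\frac{1}{\rho}\nabla(\H^1(\Omega))\cap\mV$ from Lemma~\ref{G}, and the elliptic regularity of the associated Neumann problem enter).
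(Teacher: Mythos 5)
Your proof is correct and follows essentially the same route as the paper: bound $a(\bv,\bv)$ below by $\min_{\Omega}\{\rho c^2\}\|\div\bv\|_{0,\Omega}^2$ and invoke the Poincar\'e-type estimate $\|\bv\|_{0,\Omega}\le\|\bv\|_{s,\Omega}\le C\|\div\bv\|_{0,\Omega}$ of Lemma~\ref{REGULARIDAD_DE_U}, with the product-space ellipticity of $\widetilde a$ then immediate. The parenthetical compactness argument is an unnecessary detour (the quantitative estimate already gives the conclusion directly), but the main line of reasoning matches the paper's proof.
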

\begin{proof}
For $\bv\in\mG$ we have
\begin{equation}\label{elipticidad1}
a(\bv,\bv)=\ds\int_{\Omega}\rho c^2\div\bv\div\overline{\bv}\geq\min_{\Omega}\{\rho c^2\} \|\div\bv\|_{0,\Omega}^2.
\end{equation}
Then, the $\mG$-ellipticity of $a(\cdot,\cdot)$ follows from  Lemma~\ref{REGULARIDAD_DE_U}.  {}From this, the ellipticity of
$\widetilde{a}(\cdot,\cdot)$ in $\widetilde{\mG}=\mG\times\mH$ is immediate.
\end{proof}

Let $\bT:\widetilde{\mV}\rightarrow\widetilde{\mV}$ be the bounded
linear operator defined by $\bT(\bf,\bg):=(\bu,\bz)\in\widetilde{\mG}$,
where $(\bu,\bz)$ is the unique solution of the following problem:

$$\widetilde{a}((\bu,\bz),(\bv,\widehat{\bv}))=
\widetilde{b}((\bf,\bg),(\bv, \widehat{\bv}))
\qquad\forall(\bv,\widehat{\bv})\in\widetilde{\mG}.$$
It is easy to check that

\begin{equation}\label{z=f}
\bz=\bf\quad\text{in}\ \Omega,
\end{equation}
and
\begin{equation}\label{sol_ortogonal}
\ds \int_{\Omega}\rho c^2\div\bu\div\obv=-2\int_{\Omega}\nu\div\bf\div\overline{\bv}
-\int_{\Omega}\rho\bg\cdot\overline{\bv}\qquad\forall\bv\in\mG.
\end{equation}

As a consequence  of the above equalities, we have that $\mu=0$
is an eigenvalue of $\bT$ with associated eigenspace
$\{\b0\}\times \mG^{\bot_{\mH}}$, which is nontrivial since $\mG^{\bot_{\mH}}\supset\mK$. The following lemma shows that the nonzero eigenvalues
of $\bT$ are exactly the reciprocals of the nonzero
eigenvalues of Problem~\ref{PROBLEM2} with the same
corresponding eigenfunctions.
\begin{lmm}\label{INVERSOS}
There holds that $(\mu,(\bu,\bz))$ is an eigenpair of $\bT$ $(i.e.\,\,\bT(\bu,\bz)=\mu(\bu,\bz))$ with $\mu\ne 0$
if and only if $(\lambda,(\bu,\bz))$ is a solution of
Problem~\ref{PROBLEM2} with $\lambda=1/\mu\ne 0.$
\end{lmm}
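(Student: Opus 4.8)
The plan is to prove the equivalence by chasing the definition of $\bT$ in both directions, using the two structural identities \eqref{z=f} and \eqref{sol_ortogonal} together with the splitting $\widetilde{\mV}=\widetilde{\mG}\oplus\widetilde{\mK}$. First I would observe that if $(\mu,(\bu,\bz))$ is an eigenpair of $\bT$ with $\mu\neq 0$, then necessarily $(\bu,\bz)\in\widetilde{\mG}$, because $\bT$ maps into $\widetilde{\mG}$ and $(\bu,\bz)=\mu^{-1}\bT(\bu,\bz)$; in particular $\bu\in\mG$ and the pair is an admissible test function. Writing $\bT(\bu,\bz)=\mu(\bu,\bz)$ means that $(\mu\bu,\mu\bz)$ is the solution of $\widetilde{a}((\mu\bu,\mu\bz),(\bv,\widehat\bv))=\widetilde{b}((\bu,\bz),(\bv,\widehat\bv))$ for all $(\bv,\widehat\bv)\in\widetilde{\mG}$. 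From \eqref{z=f} applied with $(\bf,\bg)=(\bu,\bz)$ we get $\mu\bz=\bu$, i.e. $\bz=\mu^{-1}\bu=\lambda\bu$ with $\lambda:=1/\mu$, which is exactly \eqref{linear2} (note \eqref{linear2} holds for all $\widehat\bv\in\mH$ precisely because $\bz=\lambda\bu$ as $\mH$-functions). From \eqref{sol_ortogonal} with $(\bf,\bg)=(\bu,\bz)$ and $\bu$ replaced by $\mu\bu$ on the left, we get $\mu\int_\Omega\rho c^2\div\bu\div\overline\bv=-2\int_\Omega\nu\div\bu\div\overline\bv-\int_\Omega\rho\bz\cdot\overline\bv$ for all $\bv\in\mG$; dividing by $\mu$ yields \eqref{linear1} for test functions in $\mG$.

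The remaining point in the forward direction is to upgrade \eqref{linear1} from test functions in $\mG$ to all test functions in $\mV=\mG\oplus\mK$. For $\bw\in\mK$ we have $\div\bw=0$, so the two terms involving $\div\bu\div\overline{\bw}$ vanish; it remains to check $\int_\Omega\rho\bz\cdot\overline{\bw}=0$, which holds because $\bz=\lambda\bu$ with $\bu\in\mG$ and $\mG\perp\mK$ in the $\mH$ inner product (this orthogonality was recorded just after the definition of $\mG$). Hence \eqref{linear1} holds for all $\bv\in\mV$, and $(\lambda,(\bu,\bz))$ solves Problem~\ref{PROBLEM2} with $\lambda=1/\mu\neq 0$. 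One must also note $(\bu,\bz)\neq\b0$: if $\bu=\b0$ then $\bz=\lambda\bu=\b0$, contradicting that it is an eigenfunction.

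For the converse, suppose $(\lambda,(\bu,\bz))$ solves Problem~\ref{PROBLEM2} with $\lambda\neq 0$. Taking $\widehat\bv\in\mK$ in \eqref{linear2} gives $\int_\Omega\rho\bz\cdot\overline{\widehat\bv}=\lambda\int_\Omega\rho\bu\cdot\overline{\widehat\bv}$; taking $\bv\in\mK$ in \eqref{linear1} gives $0=-\lambda\int_\Omega\rho\bz\cdot\overline\bv$, so $\bz\perp_{\mH}\mK$, and then from the previous relation $\bu\perp_{\mH}\mK$ as well, so $\bu\in\mG$ and $(\bu,\bz)\in\widetilde{\mG}$. Next, \eqref{linear2} for all $\widehat\bv\in\mH$ forces $\bz=\lambda\bu$ pointwise, which is \eqref{z=f} with $(\bf,\bg)=(\lambda\bu,\lambda\bz)$ and $\mu=1/\lambda$ in the role of $\bz=\bf$. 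Finally, restricting \eqref{linear1} to $\bv\in\mG$ and using $\bz=\lambda\bu$, one checks that $(\mu^{-1}\bu,\mu^{-1}\bz)=(\lambda\bu,\lambda\bz)=:(\bf,\bg)$ satisfies the defining variational problem for $\bT(\bf,\bg)$, and comparing with \eqref{sol_ortogonal} gives that $\bT(\lambda\bu,\lambda\bz)=(\bu,\bz)$, i.e. $\bT(\bu,\bz)=\mu(\bu,\bz)$ with $\mu=1/\lambda$. I expect the only place requiring care — the ``main obstacle,'' though a mild one — is keeping straight the role of the two components and verifying that the orthogonality to $\mK$ (needed to land in $\widetilde{\mG}$ and to extend the test space) really follows from the equations rather than being assumed; everything else is bookkeeping with \eqref{z=f}–\eqref{sol_ortogonal} and the $\mH$-orthogonality of the decomposition $\mV=\mG\oplus\mK$.
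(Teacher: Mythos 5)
Your argument is correct and follows essentially the same route as the paper: both directions rest on the identities \eqref{z=f} and \eqref{sol_ortogonal}, the splitting $\widetilde{\mV}=\widetilde{\mG}\oplus\widetilde{\mK}$, and the $\mH$-orthogonality of $\mG$ and $\mK$ to pass between test functions in $\widetilde{\mG}$ and in all of $\widetilde{\mV}$. The only difference is presentational (you track the two equations \eqref{linear1}--\eqref{linear2} separately rather than the combined identity $\widetilde{a}=\tfrac{1}{\mu}\widetilde{b}$), and your implicit appeal to uniqueness of the solution of the problem defining $\bT$ in the converse is exactly what the paper's ``it is easy to check'' also relies on.
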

\begin{proof}
Let $(\mu, (\bu, \bz))$ be an eigenpair of $\bT$
with $\mu\neq 0$. Hence
\begin{equation}\label{spectral1}
\widetilde{a}((\bu,\bz),(\bv,\widehat{\bv}))=\frac{1}{\mu}\widetilde{b}((\bu,\bz),(\bv,\widehat{\bv}))\qquad\forall (\bv, \widehat{\bv})\in\widetilde{\mG}.
\end{equation}
Then, according to \eqref{z=f} we have that
$\bz=\frac{1}{\mu}\bu\in\mG$. Hence, for $(\bv, \widehat{\bv})\in\widetilde{\mK}=\mK\times\{\b0\}$,
clearly $\widetilde{b}((\bu,\bz),(\bv,\widehat{\bv}))=0$
and $\widetilde{a}((\bu,\bz),(\bv,\widehat{\bv}))=0$. So, \eqref{spectral1}
holds for all $(\bv,\widehat{\bv})\in\widetilde{\mV}=\widetilde{\mG}\oplus\widetilde{\mK}$;
namely, $(\lambda,(\bu,\bz))$ with $\lambda=1/\mu$ is a solution to Problem~\ref{PROBLEM2}.

Conversely, let $(\lambda, (\bu,\bz))$ be a solution of
Problem \ref{PROBLEM2} with $\lambda\neq 0$.
Taking $\bv\in\mK$ in \eqref{linear1}, we have that
$\int_{\Omega}\rho\bz\cdot\bv=0$, which implies that $\bz\in\mG$.
On the other hand, we  observe that \eqref{linear2}
implies that $\lambda\bu=\bz\in\mG$. Hence it is easy to check
that $\bT(\bu,\bz)=\mu(\bu,\bz)$ with $\mu=1/\lambda$.
\end{proof}

\setcounter{equation}{0}
\section{Spectral Characterization}
\label{SPEC_CHAR}

The goal of this section is to characterize the spectrum of
the solution operator $\bT$. Since the inclusion
$\H_0(\div;\Omega)\hookrightarrow\L^2(\Omega)^d$ is not compact,
it is easy to check from \eqref{z=f} that $\bT$ is not compact either. However, we will show that the essential spectrum,
has to lie in a small region of the complex plane,
well separated from the isolated eigenvalues which,
according to Lemma~\ref{INVERSOS}, correspond to the
solutions of Problem~\ref{PROBLEM2}. With this aim, we will resort to the
theory described in \cite{KL} to decompose appropriately $\bT$. 
Let $\bT_1, \bT_2: \mG\rightarrow\mG$ be the operators given by
\begin{align}
\ds &\bT_1 \bf=\bu_1\in\mG:\quad
a(\bu_1,\bv)=2\int_{\Omega}\nu\div\bf\div\overline{\bv}\qquad\forall\bv\in\mG,\label{opT1}\\
\ds &\bT_2 \bg=\bu_2\in\mG:\quad
a(\bu_2,\bv)=\int_{\Omega}\rho\bg\cdot\overline{\bv}\qquad\forall\bv\in\mG.\label{opT2}
\end{align}

It is easy to check that these operators  are self-adjoint
with respect to $a(\cdot,\cdot)$. Moreover $\bT_1$ is non-negative
and $\bT_2$ is positive with respect to $a(\cdot,\cdot)$
(namely, $a(\bT_1\bv,\bv)\geq 0$ $\forall\bv\in\mG$
and $a(\bT_2\bv,\bv)>0$ $\forall\bv\in\mG$, $\bv\ne\b0$).
Moreover, we have the following result.
\begin{lmm}\label{T2COMPACTO}
The operator $\bT_2:\mG\to\mG$ is compact.
\end{lmm}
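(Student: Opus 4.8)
The plan is to show that $\bT_2$ factors through a compact embedding. By definition, for $\bg\in\mG$, the element $\bu_2=\bT_2\bg\in\mG$ solves $a(\bu_2,\bv)=\int_\Omega\rho\,\bg\cdot\overline{\bv}$ for all $\bv\in\mG$, which by Lemma~\ref{elipticidad_continua} (the $\mG$-ellipticity of $a$) is well posed and gives $\|\bu_2\|_\mV\le C\|\bg\|_{0,\Omega}$. The key point, however, is the extra regularity: since $\bu_2\in\mG$, Lemma~\ref{REGULARIDAD_DE_U} yields $\bu_2\in\H^s(\Omega)^d$ with $\|\bu_2\|_{s,\Omega}\le C\|\div\bu_2\|_{0,\Omega}\le C\|\bu_2\|_\mV\le C\|\bg\|_{0,\Omega}$. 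Thus $\bT_2$ maps $\mG$ boundedly into $\H^s(\Omega)^d$ for some fixed $s>0$.

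First I would record that the composition $\mG\hookrightarrow\mH$ (bounded, since $\|\cdot\|_\mH\sim\|\cdot\|_{0,\Omega}$) followed by $\bT_2:\mH\to\H^s(\Omega)^d$ (bounded, by the estimate above — note the right-hand side only involves the $\L^2$-norm of $\bg$) followed by the embedding $\H^s(\Omega)^d\hookrightarrow\L^2(\Omega)^d\sim\mH$ equals the map $\bg\mapsto\bT_2\bg$ viewed into $\mH$. Since $\Omega$ is a bounded Lipschitz domain and $s>0$, the Rellich–Kondrachov theorem makes $\H^s(\Omega)^d\hookrightarrow\L^2(\Omega)^d$ compact, so $\bT_2:\mG\to\mH$ is compact.

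It then remains to upgrade compactness from the $\mH$-topology to the $\mV$-topology on the target, i.e. to see that $\bT_2:\mG\to\mG$ is compact. For this I would take a bounded sequence $(\bg_n)\subset\mG$; passing to a subsequence, $\bT_2\bg_n\to\bu$ in $\mH$ for some $\bu$, and also (reflexivity) $\bg_n\rightharpoonup\bg$ weakly in $\mG$ with $\bT_2\bg_n\rightharpoonup\bT_2\bg$. Writing $\bw_n:=\bT_2\bg_n-\bT_2\bg_m\in\mG$ and using $\mG$-ellipticity together with the defining identity, $\alpha\|\bw_n\|_\mV^2\le a(\bw_n,\bw_n)=\int_\Omega\rho(\bg_n-\bg_m)\cdot\overline{\bw_n}$; bounding this by $C\|\bg_n-\bg_m\|_{0,\Omega}\|\bw_n\|_{0,\Omega}$ is not quite enough by itself, so instead I would use $a(\bw_n,\bw_n)=\int_\Omega\rho(\bg_n-\bg_m)\cdot\overline{(\bT_2\bg_n-\bT_2\bg_m)}$ and observe that the right-hand side tends to $0$ because $\bT_2\bg_n$ is Cauchy in $\mH$ while $(\bg_n)$ is bounded in $\mH$. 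Hence $(\bT_2\bg_n)$ is Cauchy in $\mV$, proving compactness.

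The main obstacle is precisely this last step: the natural regularity estimate only controls $\bT_2\bg$ in a fractional Sobolev norm that sits between $\mH$ and $\mV$, so compactness into $\mH$ is immediate but compactness into $\mV$ requires the ellipticity-plus-Cauchy argument above to recover strong $\H(\div)$-convergence. Everything else is a routine application of Lemma~\ref{REGULARIDAD_DE_U}, Lemma~\ref{elipticidad_continua}, and Rellich's theorem.
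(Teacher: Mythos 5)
Your proof is correct, and the first half (well-posedness by ellipticity, the bound $\|\bu_2\|_{s,\Omega}\leq C\|\bg\|_{0,\Omega}$ via Lemma~\ref{REGULARIDAD_DE_U}, and Rellich for the embedding $\H^s(\Omega)^d\hookrightarrow\L^2(\Omega)^d$) coincides with the paper's. Where you genuinely diverge is in upgrading compactness from the $\mH$-topology to the $\mV$-topology on the range. The paper handles the divergence part by a second regularity argument: it observes that the defining identity extends from $\mG$ to all of $\mV$ (both sides vanish on $\mK$), tests with $\mathcal{D}(\Omega)^d$ to get $-\nabla(\rho c^2\div\bu_2)=\rho\bg$, concludes $\div\bu_2|_{\Omega_i}\in\H^1(\Omega_i)$, and then invokes the compactness of the embedding of piecewise $\H^1$ functions into $\L^2(\Omega)$. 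You instead avoid any further regularity and use the $\mG$-ellipticity of $a$ plus a Cauchy-sequence argument: from $\alpha\|\bT_2\bg_n-\bT_2\bg_m\|_{\mV}^2\leq a(\bT_2(\bg_n-\bg_m),\bT_2\bg_n-\bT_2\bg_m)=\int_{\Omega}\rho(\bg_n-\bg_m)\cdot\overline{(\bT_2\bg_n-\bT_2\bg_m)}$, the right-hand side tends to zero since $(\bg_n)$ is bounded in $\L^2$ and $(\bT_2\bg_n)$ is (after extraction) Cauchy in $\L^2$. This is a clean and standard device for solution operators of elliptic problems, and it is arguably more elementary since it needs no information about the coefficients beyond ellipticity. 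What the paper's route buys in exchange is the explicit piecewise $\H^1$ regularity of $\div\bu_2$, which is not just a means to compactness here but is reused quantitatively later (e.g.\ in Lemma~\ref{u1u2} and the error estimates of Lemma~\ref{errores_ui}); your argument yields compactness but no rate. One cosmetic remark: the bound $C\|\bg_n-\bg_m\|_{0,\Omega}\|\bw_{n,m}\|_{0,\Omega}$ that you dismiss as ``not quite enough'' is in fact exactly the estimate you then use, since $\bw_{n,m}=\bT_2\bg_n-\bT_2\bg_m$ is the factor that goes to zero; the step is fine, only the phrasing is misleading.
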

\begin{proof}

Since $a(\cdot,\cdot)$ is $\mG$-elliptic
(cf. Lemma \ref{elipticidad_continua}), applying
Lax-Milgram's Lemma, we know that problem \eqref{opT2}
is well posed and has a unique solution $\bu_2\in\mG$. Moreover, according to
Lemma~\ref{REGULARIDAD_DE_U}, we know that there exists $s>0$
such that $\bu_2\in \H^s(\Omega)^d$. On the other hand, notice that
\eqref{opT2} also holds for $\bv\in\mK$,
since in such a case $a(\bu_2,\bv)=0=\int_{\Omega}\rho\bg\cdot\overline{\bv}$
for  $\bg\in\mG$. Hence, since $\mV=\mG\oplus\mK$, we have that 
\begin{equation*}
\ds a(\bu_2,\bv)=\int_{\Omega}\rho\bg\cdot\overline{\bv}\hspace{0.4cm}\forall\bv\in\mV.
\end{equation*}
Then, by testing this equation with $\bv\in\mathcal{D}(\Omega)^d\subset\mV$,
we have that $-\nabla(\rho c^2\div\bu_2)=\rho\bg$ in $\Omega$,
so that $\rho c^2\div\bu_2\in \H^1(\Omega)$. Therefore, since $\rho$ and
$c$ are positive constants in each subdomain $\Omega_1$ and $\Omega_2$,
we have that $\div\bu_2|_{\Omega_i}\in\H^1(\Omega_i)$,  $i=1,2.$
Since the inclusions $\{v\in\L^2(\Omega): v|_{\Omega_i}\in\H^1(\Omega_i),\,i=1,2.\}\subset\L^2(\Omega)$
and $\H^s(\Omega)^{d}\subset\L^2(\Omega)^d$, are compact, we derive that $\bT_2$ is compact too.
\end{proof}


The operator $\bT$ can be written
in terms of the operators $\bT_1$ and $\bT_2$
given above as follows:
\begin{equation*}
\bT=\begin{pmatrix}
 -\bT_1&-\bT_2\\
 \bId&\b0
\end{pmatrix}.
\end{equation*}
Moreover, by defining as in \cite{KL}
the operators
$$
\bS:=\begin{pmatrix}
 \bId&\b0\\
 \b0&\bT_2^{1/2}
\end{pmatrix}
\qquad\mbox{and}\qquad
\bH:=\begin{pmatrix}
 -\bT_1&-\bT_2^{1/2}\\
 \bT_2^{1/2}&\b0
\end{pmatrix},
$$
we have that $\bS\bT=\bH\bS$. We note that the eigenvalues of $\bT$ and
$\bH$ and their algebraic multiplicities coincide. Moreover the
corresponding Jordan chains have the same length. In fact, let
$\{\bx_k\}_{k=1}^r$ be a Jordan chain associated with the eigenvalue
$\mu$ of $\bT$. Then, using the identities above,  we observe that
$$\bH\bS\bx_k=\bS\bT\bx_k=\bS(\mu\bx_k+\bx_{k-1})=\mu
\bS\bx_k+\bS\bx_{k-1}, \quad k=1,\ldots,r.$$
This shows that  $\{\bS\bx_k\}_{k=1}^r$ is a Jordan
chain of $\bH$ of the same length. Actually, the whole
spectra of $\bT$ and $\bH$ coincide as is
shown in the following result, which has been
proved in Lemma 3.2 of \cite{BDRS}. 

\begin{lmm}
\label{espectros_iguales}
 There holds
 $$\sp(\bT)=\sp(\bH).$$
 Moreover, $\spe(\bT)=\spe(\bH)$, too.
\end{lmm}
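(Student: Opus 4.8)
The plan is to exploit the intertwining relation $\bS\bT=\bH\bS$ together with the fact that $\bS$ is injective with dense range, so that $\bS$ is ``almost'' an isomorphism; from this one deduces that $\bT$ and $\bH$ have the same resolvent set, hence the same spectrum, and then one refines the argument to handle the essential spectrum. First I would record the precise properties of $\bS$: since $\bT_2$ is self-adjoint, positive and compact on $(\mG,a(\cdot,\cdot))$ (Lemma~\ref{T2COMPACTO}), its square root $\bT_2^{1/2}$ is self-adjoint, injective, positive and compact, so $\bS=\mathrm{diag}(\bId,\bT_2^{1/2})$ is a bounded, injective, self-adjoint operator on $\widetilde{\mG}=\mG\times\mG$ (equipped with the product inner product induced by $a$) with dense range; note however that $\bS^{-1}$ is unbounded because $\bT_2^{1/2}$ is compact on an infinite-dimensional space.

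Next I would show $\sp(\bT)\subseteq\sp(\bH)$ and $\sp(\bH)\subseteq\sp(\bT)$. For $\mu\notin\sp(\bH)$, set $\bR:=(\bH-\mu\bId)^{-1}$ and consider the candidate resolvent $\bQ:=\bS^{-1}\bR\bS$ for $\bT-\mu\bId$, which is formally justified by $\bS(\bT-\mu\bId)=(\bH-\mu\bId)\bS$. The issue is that $\bS^{-1}$ is unbounded, so I would instead argue directly that $\bT-\mu\bId$ is injective and surjective: injectivity follows because $\bS(\bT-\mu\bId)\bx=(\bH-\mu\bId)\bS\bx=0$ forces $\bS\bx=0$, hence $\bx=0$; for surjectivity, given $\by$ one wants $\bx$ with $(\bT-\mu\bId)\bx=\by$, equivalently $(\bH-\mu\bId)\bS\bx=\bS\by$, i.e. $\bS\bx=\bR\bS\by$, and here one uses that $\bR$ preserves $\mathrm{ran}(\bS)$: this is precisely the content of the identity $\bR\bS=\bS\bT\bR\bS$—wait, more carefully, $\bR\bS=\bS(\bT-\mu)^{-1}$ would again beg the question, so the cleanest route is to invoke the abstract lemma that two operators linked by a bounded injective intertwiner with dense range that is also injective on both sides have equal spectra, or simply to cite Lemma~3.2 of \cite{BDRS} verbatim since the algebraic structure here is identical. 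Given the excerpt explicitly says ``which has been proved in Lemma 3.2 of \cite{BDRS}'', the honest proof is to verify that the hypotheses of that lemma are met: namely that $\bT$ and $\bH$ have the block forms displayed, that $\bT_2^{1/2}$ is compact and positive, and that $\bS\bT=\bH\bS$ holds—all of which have been established above.

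For the essential spectrum, I would use the characterization $\spe(\bT)=\sp(\bT)\setminus\spd(\bT)$ together with the Jordan chain correspondence already derived in the text: the map $\bx\mapsto\bS\bx$ sends a Jordan chain of $\bT$ at $\mu$ to a Jordan chain of $\bH$ at $\mu$ of the same length, and since $\bS$ is injective it preserves linear independence, so it induces an injection of the generalized eigenspace of $\bT$ at $\mu$ into that of $\bH$ at $\mu$; running the same argument with $\bS^{-1}$ on $\mathrm{ran}(\bS)$ (which contains every finite-dimensional generalized eigenspace of $\bH$ once one checks these lie in $\mathrm{ran}(\bS)$, using that on a finite-dimensional invariant subspace $\bS$ is a bijection onto its image and the generalized eigenvectors of $\bH$ are eventually in the range because $\bT_2^{1/2}$ has dense range) gives the reverse injection, so the algebraic multiplicities agree and a point is an isolated eigenvalue of finite multiplicity for $\bT$ iff it is so for $\bH$. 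Hence $\spd(\bT)=\spd(\bH)$, and combined with $\sp(\bT)=\sp(\bH)$ this yields $\spe(\bT)=\spe(\bH)$.

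The main obstacle is the unboundedness of $\bS^{-1}$: one cannot simply conjugate resolvents, so every step—equality of full spectra, and especially the matching of generalized eigenspaces needed for the essential-spectrum statement—requires a careful density/range argument rather than a one-line similarity transformation. In practice, since the paper states this is Lemma~3.2 of \cite{BDRS}, the cleanest exposition is to note that the block structure, the compactness and positivity of $\bT_2^{1/2}$, and the intertwining $\bS\bT=\bH\bS$ reproduce exactly the setting of that reference, and invoke it; I would then add a sentence reminding the reader of the Jordan-chain argument above as justification for the essential-spectrum equality.
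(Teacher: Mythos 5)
Your proposal ends where the paper does: the paper offers no proof of this lemma beyond the Jordan-chain computation displayed just before it and the citation to Lemma~3.2 of \cite{BDRS}, and your verification that the block structure, the compactness and positivity of $\bT_2^{1/2}$, and the intertwining $\bS\bT=\bH\bS$ reproduce the setting of that reference is exactly what is required. One caution about your supplementary sketches: the claim that the generalized eigenvectors of $\bH$ lie in $\mathrm{ran}(\bS)$ ``because $\bT_2^{1/2}$ has dense range'' is not a valid inference (dense range says nothing about membership of particular vectors); the correct observation is that the eigenvalue equation itself, e.g.\ $\bT_2^{1/2}\bu=\mu\bz$ for $\mu\ne 0$, forces the second component of an eigenvector of $\bH$ into $\mathrm{ran}(\bT_2^{1/2})$. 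Since you explicitly flag this issue and the circularity in the surjectivity argument as obstacles and fall back on the citation, the proof as you would finally write it is sound and coincides with the paper's.
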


The operator $\bH$ can be written as the sum of a self-adjoint
operator $\bB$ and a compact operator $\bC$:
$$\bH=\bB+\bC\quad\mbox{with}\quad\bB:=\begin{pmatrix}
-\bT_1&\b0\\
\b0&\b0
\end{pmatrix}\quad\mbox{and}\quad
\bC:=\begin{pmatrix}
\b0&-\bT_2^{1/2}\\
\bT_2^{1/2}&\b0
\end{pmatrix}.$$
Then, applying the classical Weyl's Theorem (see \cite{RS}),
we have that $\spe(\bH)=\spe(\bB)$ and the rest of the spectrum
$\spd(\bH):=\sp(\bH)\backslash\spe(\bH)$ consists of isolated
eigenvalues with finite algebraic multiplicity.
Moreover, $\spe(\bB)=\spe(-\bT_1)\cup\{0\}$.

Our next goal is to show that the essential spectrum
of $\bT_1$ must lie in a small region of the complex plane.
Actually, we will  localize the whole spectrum
of $\bT_1$. With this aim, we analyze separately
for which values  $\mu\in\C$, the operator $(\mu\bId-\bT_1)$
is not necessarily one-to-one  and for which values it is not necessarily onto.
\begin{itemize}
\item If $(\mu\bId- \bT_1)$ is not one-to-one,
then there exists $\bf\in\mG$, $\bf\neq\b0$, such that
$\bT_1\bf=\mu\bf$, namely,
\begin{equation*}\label{ec_inyect}
\ds\mu\int_{\Omega}\rho c^2\div\bf\div\overline{\bv}=
2\int_{\Omega}\nu\div\bf\div\overline{\bv}\qquad
\forall\bv\in\mG.
\end{equation*}
Then, testing with $\bv=\bf$ and using that in each subdomain the
coefficients $\rho$ and $c$ are positive, we deduce that
\begin{equation*}
\mu=\frac{2\int_{\Omega}\nu\left|\div\bf\right|^2}
{\int_{\Omega}\rho c^2\left|\div\bf\right|^2}
\end{equation*}
(we recall that for $\b0\ne\bf\in\mG$, $\int_{\Omega}|\div\bf|^2>0$ because of Lemma \ref{REGULARIDAD_DE_U}). Hence, 
$$
\mu\in\ds\left[\frac{2\min_{\Omega}\{\nu\}}{\max_{\Omega}\{\rho c^2\}},\frac{2\max_{\Omega}\{\nu\}}{\min_{\Omega}\{\rho c^2\}}\right].
$$

\item On the other hand, $(\mu\bId- \bT_1)$ is onto if and only if 
for any $\bg\in\mG$ there exists $\bf\in\mG$
such that $\bT_1\bf=\mu\bf-\bg$,
which from \eqref{opT1}  reads
\begin{equation*}
\ds\int_{\Omega}\rho c^2\div\bg\div\obv=
\int_{\Omega}(-2\nu+\mu\rho c^2)\div\bf\div\overline{\bv}\qquad\forall\bv\in\mG.
\end{equation*}
By writing $\mu=\a+\beta i$ with $\a,\beta\in\R$, the equation above reads:
\begin{equation*}
\int_{\Omega}(-2\nu+\a\rho c^2+\rho c^2\beta i)\div\bf\div\obv=
\ds\int_{\Omega}\rho c^2\div\bg\div\overline{\bv}\qquad\forall\bv\in\mG.
\end{equation*}
We observe that for all $\beta\ne 0$ the problem above has a  solution and hence
the operator $(\mu\bId-\bT_1)$ is onto. On the other hand,
if $\beta=0$, then $\mu$ has to be real. In such a case, the operator $\bT_1$
will still be onto when $(-2\nu+\mu\rho c^2)$ has the
same sign in the whole domain $\Omega$.
This happens at least in two cases:
\begin{itemize}
\item[(i)] when $\ds\mu>\frac{2\max_{\Omega}\{\nu\}}{\min_{\Omega}\{\rho c^2\}}$,  in which case $-2\nu+\mu\rho c^2>0$,
\item[(ii)] when $\ds\mu<\frac{2\min_{\Omega}\{\nu\}}{\max_{\Omega}\{\rho c^2\}}$, in which case $-2\nu+\mu\rho c^2<0$.
\end{itemize} 
Therefore, if $(\mu\bId-\bT_1)$ is not onto, then
$\mu\in\left[\frac{2\min_{\Omega}\{\nu\}}{\max_{\Omega}\{\rho c^2\}},\frac{2\max_{\Omega}\{\nu\}}{\min_{\Omega}\{\rho c^2\}}\right]$, too.
\end{itemize}


Now we are in position to write the following spectral characterization
of the solution operator $\bT$.
\begin{thrm}
\label{CARACTERIZACION}
The spectrum of $\bT$ consists of 
$$
\spe(\bT)=\sp(\bT_1)\cup\{0\}
$$
with 
$$
\sp(\bT_1)\subset\left[\frac{2\min_{\Omega}\{\nu\}}{\max_{\Omega}\{\rho c^2\}},\frac{2\max_{\Omega}\{\nu\}}{\min_{\Omega}\{\rho c^2\}}\right]
$$
and $\spd(\bT):=\sp(\bT)\setminus\spe(\bT)$, which is a set of isolated eigenvalues of finite algebraic multiplicity.
\end{thrm}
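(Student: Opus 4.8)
The plan is to assemble the statement from the structural results already in place, since almost all of the work has been front-loaded into the preceding lemmas. First I would recall the chain of identities $\bS\bT=\bH\bS$, which together with Lemma~\ref{espectros_iguales} gives $\sp(\bT)=\sp(\bH)$ and $\spe(\bT)=\spe(\bH)$. Then, writing $\bH=\bB+\bC$ with $\bB$ self-adjoint and $\bC$ compact (the compactness of $\bC$ being a consequence of Lemma~\ref{T2COMPACTO}, since $\bT_2^{1/2}$ is compact whenever $\bT_2$ is), Weyl's Theorem yields $\spe(\bH)=\spe(\bB)$ and the complement $\spd(\bH)=\sp(\bH)\setminus\spe(\bH)$ consists of isolated eigenvalues of finite algebraic multiplicity. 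Since $\bB$ is block-diagonal with blocks $-\bT_1$ and $\b0$, its essential spectrum is $\spe(-\bT_1)\cup\{0\}$.

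Next I would upgrade $\spe(-\bT_1)$ to $\sp(\bT_1)$ using the localization carried out just before the theorem: the analysis of the injectivity and surjectivity of $(\mu\bId-\bT_1)$ shows that whenever $\mu\notin[\,2\min_\Omega\{\nu\}/\max_\Omega\{\rho c^2\},\,2\max_\Omega\{\nu\}/\min_\Omega\{\rho c^2\}\,]$, the operator $(\mu\bId-\bT_1)$ is both one-to-one and onto, hence (being bounded on the Hilbert space $\mG$, with bounded inverse by the open mapping theorem) $\mu\in\rho(\bT_1)$. Consequently $\sp(\bT_1)$ is contained in that compact real interval. Because $\bT_1$ is self-adjoint with respect to the inner product $a(\cdot,\cdot)$ (equivalently, $\bS$-self-adjoint on the relevant block), one can note in passing that $\spe(-\bT_1)\subseteq\sp(-\bT_1)$ and, more importantly, that $\spe(\bT_1)$, being contained in the compact set $\sp(\bT_1)$, does not interfere with the rest; but for the statement as written it suffices to use the set inclusion $\spe(\bT)=\spe(-\bT_1)\cup\{0\}\subseteq\sp(\bT_1)\cup\{0\}$ together with the interval bound, and then to observe that $0$ is itself in $\sp(\bT_1)\cup\{0\}$, so we may simply write $\spe(\bT)=\sp(\bT_1)\cup\{0\}$ once we also verify the reverse inclusion.

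For that reverse inclusion I would argue that every $\mu\in\sp(\bT_1)$ belongs to $\spe(\bT)$. Since $\bT_1$ acts on the infinite-dimensional space $\mG$ and the perturbation connecting $-\bT_1$ (as the first block of $\bB$) to $\bH$ is compact, any point of $\sp(-\bT_1)=\sp(\bT_1)$ is in $\spe(\bB)=\spe(\bH)=\spe(\bT)$ — here one uses that $\bT_1$ is self-adjoint, so $\sp(\bT_1)=\spe(\bT_1)$ unless $\mu$ is an isolated eigenvalue of finite multiplicity; in the latter case one checks directly, from the characterization $\mu=2\int_\Omega\nu|\div\bf|^2/\int_\Omega\rho c^2|\div\bf|^2$ of eigenvalues of $\bT_1$, that the associated eigenspace is infinite-dimensional (it contains, e.g., all of $\mG$ restricted to the appropriate level set of the coefficient ratio), so $\mu\in\spe(\bT_1)$ anyway. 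Finally I would note $0\in\spe(\bT)$ because the null space of $\bT$ is $\{\b0\}\times\mG^{\perp_\mH}$, which is infinite-dimensional. Collecting these gives $\spe(\bT)=\sp(\bT_1)\cup\{0\}$, the interval bound on $\sp(\bT_1)$, and — from Weyl's Theorem applied to $\bH$ transported back to $\bT$ via Lemma~\ref{espectros_iguales} — that $\spd(\bT)=\sp(\bT)\setminus\spe(\bT)$ is a set of isolated eigenvalues of finite algebraic multiplicity, which is exactly the claim.

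The main obstacle I anticipate is the bookkeeping around $\spe(-\bT_1)$ versus $\sp(\bT_1)$: one must be careful that the self-adjoint operator $\bT_1$ could in principle have isolated eigenvalues of finite multiplicity that are not in its essential spectrum, in which case the bare equality $\spe(\bT)=\sp(\bT_1)\cup\{0\}$ would need the observation that such eigenvalues of $\bT_1$ in fact have infinite multiplicity (because the eigenvalue equation only constrains $\div\bf$ through a scalar ratio, leaving an infinite-dimensional family of $\bf\in\mG$), so that $\sp(\bT_1)=\spe(\bT_1)$ after all. Everything else is a direct transcription of Lemmas~\ref{T2COMPACTO}--\ref{espectros_iguales}, Weyl's Theorem, and the itemized resolvent analysis preceding the theorem.
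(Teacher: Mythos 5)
Your core argument coincides with the paper's: Lemma~\ref{espectros_iguales} transports the spectrum of $\bT$ to $\bH$, Weyl's Theorem applied to $\bH=\bB+\bC$ (with $\bC$ compact because $\bT_2^{1/2}$ is) gives $\spe(\bH)=\spe(\bB)=\spe(-\bT_1)\cup\{0\}$ and the statement about $\spd(\bT)$, and the interval bound is exactly the itemized injectivity/surjectivity analysis preceding the theorem. Up to that point the proposal is correct and is essentially a transcription of the paper's (very short) proof.

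The extra step you insert to upgrade $\spe(-\bT_1)\cup\{0\}$ to $\sp(\bT_1)\cup\{0\}$ does not work, however. You claim that any isolated eigenvalue of $\bT_1$ has an infinite-dimensional eigenspace because ``the eigenvalue equation only constrains $\div\bf$ through a scalar ratio, leaving an infinite-dimensional family of $\bf\in\mG$''. But $\div$ is \emph{injective} on $\mG$ (if $\bf,\bg\in\mG$ have the same divergence then $\bf-\bg\in\mG\cap\mK=\{\b0\}$), so prescribing $\div\bf$ determines $\bf$ uniquely; there is no infinite-dimensional family. In fact, since $\div$ maps $\mG$ bijectively onto the mean-zero subspace of $\L^2(\Omega)$, the equation $\bT_1\bf=\mu\bf$ is equivalent to $(2\nu-\mu\rho c^2)\div\bf$ being constant on $\Omega$. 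The values $\mu_i=2\nu_i/(\rho_ic_i^2)$ do give infinite-dimensional eigenspaces, but the value $\mu^*=2(|\Omega_1|\nu_2+|\Omega_2|\nu_1)/(|\Omega_1|\rho_2c_2^2+|\Omega_2|\rho_1c_1^2)$ generically yields an isolated eigenvalue with a \emph{one-dimensional} eigenspace (spanned by the unique $\bf\in\mG$ with $\div\bf=c_0/(2\nu-\mu^*\rho c^2)$), so $\sp(\bT_1)$ and $\spe(\bT_1)$ can genuinely differ; moreover $\sp(-\bT_1)=-\sp(\bT_1)$, not $\sp(\bT_1)$. To be fair, the paper's own proof stops at $\spe(\bT)=\spe(-\bT_1)\cup\{0\}$ and silently identifies this with $\sp(\bT_1)\cup\{0\}$, so you noticed a real loose end; the discrepancy is harmless for everything that follows, because both $\spe(-\bT_1)$ and $\sp(\bT_1)$ lie in (a reflection of) the stated small interval, which is all that is used later. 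But your proposed justification of the literal equality is based on a false claim and should be dropped or replaced by the weaker, correct assertion $\spe(\bT)=\spe(-\bT_1)\cup\{0\}\subset-\bigl[\frac{2\min_{\Omega}\{\nu\}}{\max_{\Omega}\{\rho c^2\}},\frac{2\max_{\Omega}\{\nu\}}{\min_{\Omega}\{\rho c^2\}}\bigr]\cup\{0\}$.
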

\begin{proof}
As a consequence of  the classical Weyl's Theorem (see \cite{RS}) and 
 Lemma~\ref{espectros_iguales},
$$
\spe(\bT)=\spe(\bH)=\spe(\bB)=\spe(-\bT_1)\cup\{0\},
$$
whereas the inclusion follows from the above analyis.
\end{proof}

In what follows, we will show that for $\nu$ small enough
some of the eigenvalues of $\bT$ are well separated from
its essential spectrum.  With this end, given $\bf\in\mG$,
by testing \eqref{opT1} with $\bv=\bu_1\in\mG$ and using
\eqref{elipticidad1}, we have that 
\begin{align*}
\ds\min_{\Omega}\{\rho c^2\}\|\bu_1\|_{\div,\Omega}^2\leq a(\bu_1,\bu_1)&\leq 2\|\nu\|_{\infty,\Omega}\|\div\bf\|_{0,\Omega}\|\bu_1\|_{\div,\Omega}.
\end{align*}

Therefore $\|\bT_1\|_{\mathcal{L}(\mG\times\mG)}\rightarrow 0$
as $\|\nu\|_{\infty,\Omega}$ goes to zero. Consequently,
$\bH$ converges in norm to the operator 
$$\bH_0:=\begin{pmatrix}
 \b0&-\bT_2^{1/2}\\
 \bT_2^{1/2}&\b0
\end{pmatrix}
$$
as $\|\nu\|_{\infty,\Omega}$ goes to zero. Thus, from the
classical spectral approximation theory (see \cite{KATO}), the isolated eigenvalues
of $\bH$ converge to those of $\bH_0$. 

Since the isolated eigenvalues of $\bH$ and $\bT$ coincide
(cf. Lemma~\ref{espectros_iguales}), in order to localize
those of $\bT$, we begin by  characterizing those of
$\bH_0$. Let $\mu$ be an isolated eigenvalue of
$\bH_0$ and $(\bu,\bz)\in\mG\times\mG$ the corresponding eigenfunction.
It is easy to check that 
\begin{equation}\label{-mu2}
\bH_0\begin{pmatrix}
\bu\\
\bz
\end{pmatrix}=\mu \begin{pmatrix}
\bu\\
\bz
\end{pmatrix}\quad \Longleftrightarrow\quad \bT_2\bu=-\mu^2\bu \quad\text{and}\quad\bT_2^{1/2}\bu=\mu\bz.
\end{equation}

Since $\bT_2$ is compact, self-adjoint, and positive, its spectrum
consists of a sequence of positive eigenvalues that converge to zero
and $0$ itself. Notice that the spectrum of $\bT_2$ is related with the solution of the eigenvalue  problem \eqref{PROB_LIMITE}. In fact,
this problem has $0$ as an eigenvalue with corresponding eigenspace $\mK$. The rest of the eigenvalues $\omega^2$ are strictly
positive and the corresponding  eigenfunctions $\bu\in\mK^{\bot_{\mV}}=:\mG$, so that they are also solutions of the following problem:
Find $\omega>0$ and $\bu\in\mG$ such that
\begin{equation*}
\ds a(\bu,\bv)=\omega^2\int_{\Omega}\rho\bu\cdot\overline{\bv}\qquad\forall\bv\in\mG.
\end{equation*}

Clearly $(\omega^2,\bu)$ is an eigenpair of the above problem with $\omega>0$ if and only if $\bT_2\bu=\frac{1}{\omega^2}\bu$. Thus, by virtue of \eqref{-mu2},
we have that the eigenvalues of $\bH_0$ are given
by $\pm i/\omega$  and hence they are purely imaginary. 

Now we are in a position to establish the following result.
\begin{thrm}
For each isolated eigenvalue $\pm i/\omega$ of $\bT_2$ of algebraic multiplicity
$m$, let $r>0$ be such that the disc
$D_r:=\{ z\in\mathbb{C}:\hspace{0.2cm} |z\mp i/\omega |< r\}$
intersects $\sp(\bT_2)$ only in $\pm i/\omega$. Then, there exists
$\delta >0$ such that if $\|\nu\|_{\infty,\Omega}<\delta$,
there exist $m$ eigenvalues of $\bT$, $\mu_1,\ldots, \mu_m$, (repeated according to their respective algebraic multiplicities)
lying in the disc $D_r$. Moreover, $\mu_1,\ldots,\mu_m\rightarrow\frac{i}{\omega}$
as $\|\nu\|_{\infty,\Omega}$ goes to zero.
\end{thrm}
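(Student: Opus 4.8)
The plan is to read this as a norm-resolvent perturbation of the \emph{fixed} operator $\bH_0$, since the whole dependence of $\bH$ on the viscosity is carried by the block $-\bT_1$. First I would pass from $\bT$ to $\bH$ using Lemma~\ref{espectros_iguales}: the two operators share spectrum and essential spectrum, and the identity $\bH\bS\bx_k=\bS\bT\bx_k$ recorded above shows that their isolated eigenvalues have equal algebraic multiplicities. I then collect the two facts already at hand: $\|\bH-\bH_0\|_{\mathcal{L}(\mG\times\mG)}=\|\bT_1\|_{\mathcal{L}(\mG\times\mG)}\to 0$ as $\|\nu\|_{\infty,\Omega}\to 0$ (because $a(\cdot,\cdot)$ and $\bT_2$, hence $\bH_0$, do not involve $\nu$); and, by \eqref{-mu2}, that $i/\omega$ is an isolated eigenvalue of the skew-adjoint operator $\bH_0$ of algebraic (= geometric) multiplicity $m$, namely the multiplicity of $1/\omega^2$ for the compact self-adjoint positive operator $\bT_2$. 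Shrinking $r$ if necessary --- which only strengthens the conclusion --- I also arrange $0\notin\overline{D_r}$, so that the circle $\partial D_r$ avoids $\sp(\bH_0)$ altogether.

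Next comes the classical spectral-approximation argument of \cite{KATO}. Since $\partial D_r$ lies in the resolvent set of $\bH_0$, $(z\bId-\bH_0)^{-1}$ is uniformly bounded there; a Neumann-series estimate then gives, for $\|\nu\|_{\infty,\Omega}$ below some threshold, that $\partial D_r$ lies in the resolvent set of $\bH$ and that $(z\bId-\bH)^{-1}\to(z\bId-\bH_0)^{-1}$ uniformly on $\partial D_r$. Consequently the Riesz spectral projection of $\bH$ over $\partial D_r$ converges in norm to that of $\bH_0$, which has rank $m$; once these two projections lie within distance $1$ of each other they have the same rank, so the portion of $\sp(\bH)$ enclosed by $\partial D_r$ has total algebraic multiplicity $m$.

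The step I expect to require the most care is to certify that this enclosed spectrum consists of genuine isolated eigenvalues of finite multiplicity rather than essential spectrum. Here I would invoke the localization of Theorem~\ref{CARACTERIZACION}: $\spe(\bH)=\spe(-\bT_1)\cup\{0\}\subset[-2\|\nu\|_{\infty,\Omega}/\min_{\Omega}\{\rho c^2\},\,0]$, a segment on the real axis that collapses to $\{0\}$ as $\nu\to 0$. Having already arranged $0\notin\overline{D_r}$, I pick $\delta$ small enough that this segment is disjoint from $\overline{D_r}$; then for $\|\nu\|_{\infty,\Omega}<\delta$ the spectrum inside $D_r$ belongs to $\spd(\bH)$, so it is a finite set of isolated eigenvalues of finite algebraic multiplicity summing to $m$, which by Lemma~\ref{espectros_iguales} are also eigenvalues of $\bT$; call them $\mu_1,\dots,\mu_m$.

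For the convergence $\mu_i\to i/\omega$ I would repeat the projection argument with $D_\varepsilon$ replacing $D_r$ for an arbitrary $\varepsilon\in(0,r)$: this produces a threshold below which $\sp(\bH)\cap D_\varepsilon$ again has total multiplicity $m$, and since $D_\varepsilon\subset D_r$ all of $\mu_1,\dots,\mu_m$ must then lie in $D_\varepsilon$; letting $\varepsilon\to 0$ yields the claim. (Equivalently, one invokes upper semicontinuity of the spectrum under norm-resolvent convergence.) Beyond this bookkeeping and the essential-spectrum exclusion, every step is routine.
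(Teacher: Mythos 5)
Your proposal is correct and follows essentially the same route the paper intends: the paper offers no written proof, relying precisely on the preceding discussion that $\|\bT_1\|\to 0$ forces $\bH\to\bH_0$ in norm, that the isolated eigenvalues of $\bH_0$ are $\pm i/\omega$ via \eqref{-mu2}, and that Kato's perturbation theory together with the localization of $\spe(\bT)$ on the real axis (Theorem~\ref{CARACTERIZACION}) yields the claim. Your Riesz-projection elaboration, including the observation that the closed disc avoids the (real) essential spectrum once $r<1/\omega$, is exactly the detail the paper leaves implicit.
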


As claimed above, the eigenvalues of $\bT$ that are relevant
in the applications, are those which are close to $\pm i/\omega$
for the smallest positive vibration frequencies $\omega$ of \eqref{PROB_LIMITE}.
According to the above theorem, these eigenvalues are well separated
from the real  axis and, hence, from the
essential spectrum of $\bT$ (cf. Theorem~\ref{CARACTERIZACION}).

\setcounter{equation}{0}
\section{Spectral Approximation}
\label{SPEC_APP}

In this section, we propose and analyze a finite element method
to approximate the solutions of Problem~\ref{PROBLEM1}.
With this end, we introduce appropriate discrete spaces.
Let $\{\mathcal{T}_h(\Omega)\}_{h>0}$ be a family of regular partitions
of $\Omega$ such that $\mathcal{T}_h(\Omega_i):=\{T\in\mathcal{T}_h: T\subset\overline{\Omega}_i\}$
are partitions of $\Omega_i$, $i=1,2$. We introduce the lowest-order Raviart-Thomas
finite element space:
\begin{equation*}
\mV_h:=\{\bv\in\mV: \bv|_T(\boldsymbol{x})=\ba+b\boldsymbol{x},\ \ba\in\mathbb{R}^d,\  b\in\mathbb{R},\ \boldsymbol{x}\in T\}.
\end{equation*}
The discretization of Problem~\ref{PROBLEM1} reads as follows: 
\begin{problem}\label{problem3}
Find $\lambda_h\in\mathbb{C}$ and $\b0\neq\bu_h\in \mV_h$ such that
\begin{equation*}
 \ds\lambda_h^2\int_{\Omega}\rho\bu_h\cdot\overline{\bv}_h+
 2\lambda_h\int_{\Omega}\nu\div\bu_h\div\overline{\bv}_h+\int_{\Omega}\rho c^2\div\bu_h\div\overline{\bv}_h=0\qquad\forall \bv_h\in \mV_h.
\end{equation*}
\end{problem}

We proceed as we did in the continuous case and 
introduce a new discrete variable $\bz_h:=\lambda_h\bu_h$
to rewrite the problem above in the following equivalent form:
\begin{problem}\label{PROBLEM4}
 Find $\lambda_h\in\mathbb{C}$ and $\b0\ne(\bu_h, \bz_h)\in \mV_h\times \mV_h$ such that
\begin{align*}
\ds&\int_{\Omega}\rho c^2\div\bu_h\div \overline{\bv}_h=\lambda_h\left(-2\int_{\Omega}\nu\div\bu_h
\div\overline{\bv}_h-\int_{\Omega}\rho\bz_h\cdot\overline{\bv}_h\right)\qquad\forall \bv_h\in\mV_h, \\
\ds&\int_{\Omega}\rho\bz_h\cdot\overline{\widehat{\bv}}_h=
\lambda_h\int_{\Omega}\rho\bu_h\cdot\overline{\widehat{\bv}}_h\qquad
\forall\widehat{\bv}_h\in \mV_h.
\end{align*}
\end{problem}
We observe that $\lambda_h=0$ is an eigenvalue of this problem
and its associated eigenspace is $\widetilde{\mK}_h:=\mK_h\times\{0\}$
with $\mK_h:=\mK\cap\mV_h$ being the eigenspace of $\lambda_h=0$ in Problem~\ref{problem3}.
At the beginning of Section \ref{NUMERICOS}, we will show that Problem~\ref{PROBLEM4}
is  well posed, in the sense that it is equivalent to a generalized matrix eigenvalue
problem with a symmetric positive definite right-hand side matrix.

We introduce the well known Raviart-Thomas interpolation operator,
$\Pi_h: \mV\cap \H^r(\Omega)^d\rightarrow\mV_h$, $r\in(0,1]$ (see \cite{MONK}), for which there holds the approximation result
\begin{equation}\label{error_inter_RT}
\ds\|\bv-\Pi_h\bv\|_{0,\Omega}\leq Ch^r(\|\bv\|_{r,\Omega}+\|\div\bv\|_{0,\Omega})
\end{equation}
and  the commuting diagram property
\begin{equation}\label{conmutativo}
\div(\Pi_h\bv)=\P_h(\div\bv),
\end{equation}
where 
$$\P_h: \L^2(\Omega)\rightarrow\mU_h:=\{v_h\in\L^2(\Omega):\hspace{0.1cm}v_h|_T\in\P_0(T)\hspace{0.2cm}\forall T\in\mathcal{T}_h\}$$
is the standard $\L^2$-orthogonal projector. Then, for any $r\in (0,1]$ we have that
\begin{equation}\label{errorL2}
\ds\|q-\P_hq\|_{0,\Omega}\leq Ch^r\|q\|_{r,\Omega}\qquad\forall q\in\H^r(\Omega).
\end{equation}

Let $\mG_h$ be the orthogonal
complement of $\mK_h$ in $\mV_h$,
and $\widetilde{\mG}_h:=\mG_h\times\mG_h\subset\widetilde{\mV}=\mV\times\mH$
endowed with the corresponding product norm.
Note that $\mG_h\nsubseteq\mG$ and hence $\widetilde{\mG}_h\nsubseteq\widetilde{\mG}$. 

The following result provides estimates for the terms in the Helmholtz decomposition of functions in $\mG_h$.
\begin{lmm}\label{helmoltzdisc}
For any $\bv_h\in\mG_h$,
\begin{equation*}
\bv_h=\frac{1}{\rho}\nabla\xi+\chi
\end{equation*}
with $\frac{1}{\rho}\nabla\xi\in\H^{s}(\Omega)^d$ and $\chi\in\mK$ satisfying
\begin{equation*}
\left\|\frac{1}{\rho}\nabla\xi\right\|_{s,\Omega}\leq
C\|\div\bv_h\|_{0,\Omega}\qquad\mbox{and}\qquad\|\chi\|_{0,\Omega}
\leq Ch^{s}\|\div\bv_h\|_{0,\Omega}. 
 \end{equation*}
\end{lmm}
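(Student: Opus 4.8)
The plan is to construct the Helmholtz-type splitting of $\bv_h\in\mG_h$ by solving an auxiliary Neumann problem driven by $\div\bv_h$, and then to estimate the two pieces separately, using the characterization of $\mG$ from Lemma~\ref{G}, the regularity estimate of Lemma~\ref{REGULARIDAD_DE_U}, and the commuting diagram property \eqref{conmutativo} together with the interpolation bound \eqref{error_inter_RT}. Concretely, I would let $\xi\in\H^1(\Omega)/\mathbb{C}$ be the unique solution of
\begin{equation*}
\div\Bigl(\tfrac{1}{\rho}\nabla\xi\Bigr)=\div\bv_h\quad\text{in }\Omega,\qquad
\tfrac{1}{\rho}\tfrac{\partial\xi}{\partial\bn}=0\quad\text{on }\partial\Omega,
\end{equation*}
so that $\tfrac{1}{\rho}\nabla\xi\in\mG$ by Lemma~\ref{G}, and set $\chi:=\bv_h-\tfrac{1}{\rho}\nabla\xi$. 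By construction $\div\chi=0$ and $\chi\cdot\bn=0$ on $\partial\Omega$, i.e.\ $\chi\in\mK$; this gives the decomposition.

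For the gradient part, Lemma~\ref{REGULARIDAD_DE_U} applied to $\tfrac{1}{\rho}\nabla\xi\in\mG$ yields directly $\tfrac{1}{\rho}\nabla\xi\in\H^s(\Omega)^d$ with
\begin{equation*}
\Bigl\|\tfrac{1}{\rho}\nabla\xi\Bigr\|_{s,\Omega}\leq C\Bigl\|\div\bigl(\tfrac{1}{\rho}\nabla\xi\bigr)\Bigr\|_{0,\Omega}=C\|\div\bv_h\|_{0,\Omega},
\end{equation*}
which is the first asserted bound. The crux is the $\L^2$ estimate on $\chi$. Here I would exploit that $\bv_h\in\mV_h$ while the gradient part is not discrete: I would test the $\mG_h$-orthogonality of $\bv_h$ to $\mK_h$, but more efficiently, estimate $\|\chi\|_{0,\Omega}$ by comparing $\bv_h$ with $\Pi_h\bigl(\tfrac{1}{\rho}\nabla\xi\bigr)$. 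Since $\div\bigl(\tfrac{1}{\rho}\nabla\xi\bigr)=\div\bv_h$ and $\div\bv_h$ is piecewise constant (as $\bv_h\in\mV_h$), the commuting diagram \eqref{conmutativo} gives $\div\Pi_h\bigl(\tfrac{1}{\rho}\nabla\xi\bigr)=\P_h\div\bv_h=\div\bv_h=\div\bv_h$, hence $\bv_h-\Pi_h\bigl(\tfrac{1}{\rho}\nabla\xi\bigr)\in\mK_h$. Because $\chi\in\mK$ is $\mH$-orthogonal to $\mG$ and $\bv_h-\tfrac{1}{\rho}\nabla\xi=\chi$, writing
\begin{equation*}
\chi=\Bigl(\bv_h-\Pi_h\bigl(\tfrac{1}{\rho}\nabla\xi\bigr)\Bigr)+\Bigl(\Pi_h\bigl(\tfrac{1}{\rho}\nabla\xi\bigr)-\tfrac{1}{\rho}\nabla\xi\Bigr)
\end{equation*}
exhibits $\chi$ as a sum of an element of $\mK_h$ and the interpolation error. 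I would then use that $\chi\perp_{\mH}\mG_h$ is false in general, so instead I bound $\|\chi\|_{0,\Omega}$ by testing: since $\bv_h\perp_{\mV}\mK_h$, one has $(\bv_h,\bv_h-\Pi_h(\tfrac1\rho\nabla\xi))_{\mV}$-type identities; the cleanest route is to note $\chi\in\mK$ and $\bv_h-\Pi_h(\tfrac1\rho\nabla\xi)\in\mK_h\subset\mK$, so $\|\chi\|_{0,\Omega}^2=(\chi,\chi)_{\mH}$ and, expanding, the $\mK_h$-component is controlled by the $\mH$-orthogonality of $\bv_h$ to $\mK_h$ after subtracting the mean, leaving
\begin{equation*}
\|\chi\|_{0,\Omega}\leq C\Bigl\|\Pi_h\bigl(\tfrac{1}{\rho}\nabla\xi\bigr)-\tfrac{1}{\rho}\nabla\xi\Bigr\|_{0,\Omega}\leq Ch^s\Bigl(\Bigl\|\tfrac{1}{\rho}\nabla\xi\Bigr\|_{s,\Omega}+\|\div\bv_h\|_{0,\Omega}\Bigr)\leq Ch^s\|\div\bv_h\|_{0,\Omega},
\end{equation*}
using \eqref{error_inter_RT} and the gradient estimate already obtained.

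The step I expect to be the main obstacle is the reduction of $\|\chi\|_{0,\Omega}$ to the interpolation error, i.e.\ controlling the $\mK_h$-component $\bv_h-\Pi_h(\tfrac1\rho\nabla\xi)$ of $\chi$. The point is that $\bv_h$ is $\mV$-orthogonal (equivalently, since $\div$ vanishes on $\mK_h$, also $\mH$-orthogonal) to all of $\mK_h$; writing $\chi_h:=\bv_h-\Pi_h(\tfrac1\rho\nabla\xi)\in\mK_h$ and $e_h:=\Pi_h(\tfrac1\rho\nabla\xi)-\tfrac1\rho\nabla\xi$, we get $\chi=\chi_h+e_h$ and
\begin{equation*}
\|\chi_h\|_{\mH}^2=(\chi_h,\chi)_{\mH}-(\chi_h,e_h)_{\mH}=(\chi_h,\bv_h)_{\mH}-(\chi_h,\tfrac1\rho\nabla\xi)_{\mH}-(\chi_h,e_h)_{\mH}.
\end{equation*}
The first term vanishes because $\bv_h\perp_{\mH}\mK_h$; the second vanishes because $\tfrac1\rho\nabla\xi\in\mG$ is $\mH$-orthogonal to $\mK\supset\mK_h$; hence $\|\chi_h\|_{\mH}^2=-(\chi_h,e_h)_{\mH}\leq\|\chi_h\|_{\mH}\|e_h\|_{0,\Omega}$ (up to the $\rho$-weight, absorbed into $C$), giving $\|\chi_h\|_{0,\Omega}\leq C\|e_h\|_{0,\Omega}$ and therefore $\|\chi\|_{0,\Omega}\leq C\|e_h\|_{0,\Omega}$. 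Combining with \eqref{error_inter_RT} and the $\H^s$-bound on $\tfrac1\rho\nabla\xi$ completes the proof. The only delicate points are keeping track of the equivalence of the weighted and unweighted $\L^2$-norms (harmless, by the remark after the definition of $\mH$ and $\mV$) and verifying that $\div\bv_h$ being piecewise constant makes $\P_h\div\bv_h=\div\bv_h$, which is immediate from the definition of $\mU_h$.
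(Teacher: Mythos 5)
Your proof is correct and follows essentially the same route as the paper: the same auxiliary Neumann problem defining $\xi$, the same use of Lemmas~\ref{G} and~\ref{REGULARIDAD_DE_U} for the gradient part, and the same key observation that the commuting diagram property \eqref{conmutativo} places $\bv_h-\Pi_h\bigl(\tfrac{1}{\rho}\nabla\xi\bigr)$ in $\mK_h$, so that the $\mH$-orthogonality of $\bv_h$ to $\mK_h$ and of $\mG$ to $\mK$ reduce $\|\chi\|_{0,\Omega}$ to the Raviart--Thomas interpolation error. The only difference is cosmetic (you bound the $\mK_h$-component separately and use the triangle inequality, while the paper expands $\|\chi\|_{\mH}^2$ directly), and the hesitation in your middle paragraph is resolved correctly by your final argument.
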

\begin{proof}
 Let $\xi\in\H^1(\Omega)/\mathbb{C}$ be a solution
of the following well-posed Neumann problem:
\begin{align*}
\ds\div\left(\frac{1}{\rho}\nabla\xi\right)&=\div\bv_h\quad\mbox{in}\ \Omega,\
\\
\ds\frac{1}{\rho}\frac{\partial\xi}{\partial\bn}&=0\hspace{0.4cm}\qquad\text{on}\ \partial\Omega.
\end{align*}

Thanks to Lax-Milgram's Lemma, there exists a unique solution
$\xi\in\H^1(\Omega)/\mathbb{C}$ of this problem.
Moreover, according to Lemmas~\ref{G} and \ref{REGULARIDAD_DE_U},
$\frac{1}{\rho}\nabla\xi\in\H^s(\Omega)^d$ and
$\|\frac{1}{\rho}\nabla\xi\|_{s,\Omega}\leq C\|\div\bv_h\|_{0,\Omega}$.
Now, let $\chi:=\bv_h-\frac{1}{\rho}\nabla\xi$.
Clearly $\div\chi=0$
and $\chi\cdot\boldsymbol{n}=0$, so
that $\chi\in\mK.$ On the other hand
\begin{equation*}
\ds\|\chi\|_{\mH}^2=\int_{\Omega}\rho\chi\cdot\left(\bv_h-\frac{1}{\rho}\nabla\xi\right).
\end{equation*}
Since $\frac{1}{\rho}\nabla\xi\in\mV\cap\H^s(\Omega)^d$,
we have that $\Pi_h(\frac{1}{\rho}\nabla\xi)$ is well defined.
Hence,
\begin{equation*}
\ds\|\chi\|_{\mH}^2=\underbrace{\int_{\Omega}\rho\chi\cdot\left[\bv_h-\Pi_h
\left(\frac{1}{\rho}\nabla\xi\right)\right]}_{(I)}
+\underbrace{\int_{\Omega}\rho\chi\cdot\left[\Pi_h\left(\frac{1}{\rho}\nabla\xi\right)
-\frac{1}{\rho}\nabla\xi\right]}_{(II)}.
\end{equation*}
For $(I)$, thanks to
\eqref{conmutativo}, $\div(\bv_h-\Pi_h(\frac{1}{\rho}\nabla\xi))
=\div\bv_h-\P_h(\div(\frac{1}{\rho}\nabla\xi))=0$. Therefore, $(\bv_h-\Pi_h(\frac{1}{\rho}\nabla\xi))\in\mK_h\subset\mK$.
Since $\frac{1}{\rho}\nabla\xi\in\mG$ and $\bv_h\in\mG_h$, we obtain
\begin{equation}
\ds(I)=\int_{\Omega}\rho\bv_h\cdot\left[\bv_h-\Pi_h\left(\frac{1}{\rho}\nabla\xi\right)\right]-\int_{\Omega}\rho\left(\frac{1}{\rho}\nabla\xi\right)\cdot\left[\bv_h-\Pi_h\left(\frac{1}{\rho}\nabla\xi\right)\right]=0.
\end{equation}

For $(II)$,
since we have already proved that $\|\frac{1}{\rho}\nabla\xi\|_{s,\Omega}\leq C\|\div\bv_h\|_{0,\Omega}$
and $\div(\frac{1}{\rho}\nabla\xi)=\div\bv_h$, from \eqref{error_inter_RT} we obtain
\begin{equation*}
\ds (II)\leq \|\chi\|_{0,\Omega}\left\|\Pi_h\left(\frac{1}{\rho}\nabla\xi\right)
-\frac{1}{\rho}\nabla\xi\right\|_{0,\Omega}
\leq Ch^s\|\chi\|_{0,\Omega}\|\div\bv_h\|_{0,\Omega},
\end{equation*}
which allows us to complete the proof.
\end{proof}

Now, we will prove that $a(\cdot,\cdot)$ and $\widetilde{a}(\cdot,\cdot)$
are elliptic in $\mG_h$ and $\widetilde{\mG}_h$, respectively.
\begin{lmm}
The sesquilinear form $a:\mG_h\times\mG_h\rightarrow\mathbb{C}$
is $\mG_h$-elliptic, with ellipticity constant not depending on $h$. Consequently,
$\widetilde{a}:\widetilde{\mG}_h\times\widetilde{\mG}_h\rightarrow\mathbb{C}$
is $\widetilde{\mG}_h$-elliptic
uniformly in $h$.
\end{lmm}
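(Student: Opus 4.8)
The plan is to transport the argument of Lemma~\ref{elipticidad_continua} to the discrete spaces; the only new ingredient needed is an $h$-uniform bound of the full $\H(\div;\Omega)$-norm of a function in $\mG_h$ by the norm of its divergence, and this is exactly what Lemma~\ref{helmoltzdisc} provides.

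First I would note that, exactly as in \eqref{elipticidad1}, for every $\bv_h\in\mG_h$,
\[
a(\bv_h,\bv_h)=\int_{\Omega}\rho c^2|\div\bv_h|^2\ \geq\ \min_{\Omega}\{\rho c^2\}\,\|\div\bv_h\|_{0,\Omega}^2 ,
\]
so it suffices to prove $\|\bv_h\|_{0,\Omega}\leq C\|\div\bv_h\|_{0,\Omega}$ with $C$ independent of $h$. For this I invoke Lemma~\ref{helmoltzdisc}: writing $\bv_h=\frac{1}{\rho}\nabla\xi+\chi$ as there and using the triangle inequality together with $\|\cdot\|_{0,\Omega}\leq\|\cdot\|_{s,\Omega}$,
\[
\|\bv_h\|_{0,\Omega}\leq\Big\|\tfrac{1}{\rho}\nabla\xi\Big\|_{s,\Omega}+\|\chi\|_{0,\Omega}\leq C\|\div\bv_h\|_{0,\Omega}+Ch^{s}\|\div\bv_h\|_{0,\Omega}.
\]
Since the family $\{\mathcal{T}_h(\Omega)\}_{h>0}$ is regular, $h$ is bounded above by a fixed constant, so $h^{s}\leq C$ and $\|\bv_h\|_{0,\Omega}\leq C\|\div\bv_h\|_{0,\Omega}$ with $C$ independent of $h$. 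Combining the two displays,
\[
\|\bv_h\|_{\div,\Omega}^2=\|\bv_h\|_{0,\Omega}^2+\|\div\bv_h\|_{0,\Omega}^2\leq C\|\div\bv_h\|_{0,\Omega}^2\leq\frac{C}{\min_{\Omega}\{\rho c^2\}}\,a(\bv_h,\bv_h),
\]
and, since $\|\cdot\|_{\mV}$ is equivalent to $\|\cdot\|_{\div,\Omega}$, this is the asserted $\mG_h$-ellipticity of $a(\cdot,\cdot)$ with an $h$-independent constant, say $\alpha$.

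The claim for $\widetilde a(\cdot,\cdot)$ then follows at once: for $(\bv_h,\wbv_h)\in\widetilde{\mG}_h=\mG_h\times\mG_h$,
\[
\widetilde a\big((\bv_h,\wbv_h),(\bv_h,\wbv_h)\big)=a(\bv_h,\bv_h)+\|\wbv_h\|_{\mH}^2\geq\alpha\|\bv_h\|_{\mV}^2+\|\wbv_h\|_{\mH}^2\geq\min\{\alpha,1\}\big\|(\bv_h,\wbv_h)\big\|_{\widetilde{\mV}}^2 ,
\]
which is the $\widetilde{\mG}_h$-ellipticity of $\widetilde a(\cdot,\cdot)$, uniform in $h$.

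I expect the genuine work to be already contained in Lemma~\ref{helmoltzdisc}: since $\mG_h\nsubseteq\mG$, a naive Helmholtz splitting of $\bv_h$ would leave a divergence-free remainder with no $\L^2$-control; it is the commuting-diagram property \eqref{conmutativo} that forces this remainder into $\mK_h$, hence $\mH$-orthogonal to $\bv_h\in\mG_h$, and therefore of order $h^{s}$. With that lemma in hand, the proof above is just the bookkeeping sketched here.
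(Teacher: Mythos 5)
Your proof is correct and follows essentially the same route as the paper: the lower bound $a(\bv_h,\bv_h)\geq\min_{\Omega}\{\rho c^2\}\|\div\bv_h\|_{0,\Omega}^2$ combined with the discrete Helmholtz decomposition of Lemma~\ref{helmoltzdisc} to get $\|\bv_h\|_{0,\Omega}\leq C\|\div\bv_h\|_{0,\Omega}$ uniformly in $h$, and then the product-space structure for $\widetilde a(\cdot,\cdot)$. Your closing remark correctly identifies that the substantive content lives in Lemma~\ref{helmoltzdisc}, which is exactly how the paper organizes the argument.
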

\begin{proof}
Let $\bv_h\in\mG_h$. We have that
\begin{equation*}\label{ELIPTICIDAD-DISCRETA}
\ds a(\bv_h,\bv_h)=\int_{\Omega}\rho c^2\div\bv_h\div\overline{\bv}_h\geq\min_{\Omega}\{\rho c^2\}\|\div\bv_h\|
_{0,\Omega}^2.
\end{equation*}
Now, from Lemma \ref{helmoltzdisc} we write $\bv_h=\frac{1}{\rho}\nabla\xi+\chi$ with $\frac{1}{\rho}\nabla\xi\in\H^{s}(\Omega)^d$ and $\chi\in\mK$.
Then, using  Lemma \ref{helmoltzdisc} again we obtain
\begin{equation*}
\|\bv_h\|_{0,\Omega}\leq \left\|\frac{1}{\rho}\nabla\xi\right\|_{0,\Omega}+\|\chi\|_{0,\Omega}\leq C\|\div\bv_h\|_{0,\Omega},
\end{equation*}
which together with the previous inequality allow us to conclude that $a(\cdot,\cdot)$ is $\mG_h -$ elliptic.
The $\widetilde{\mG}_h$-ellipticity
of $\widetilde{a}(\cdot,\cdot)$ is a direct consequence
of the $\mG_h$-ellipticity of $a(\cdot,\cdot)$. 
\end{proof}

Now, we are in position to introduce the discrete version of the operator $\bT$. Let $\bT_{h}:\widetilde{\mV}\rightarrow\widetilde{\mV}$
be defined by
$\bT_{h}(\bf,\bg):=(\bu_h,\bz_h)$
with $(\bu_h,\bz_h)\in\widetilde{\mG}_h$ being the solution of 
\begin{equation*}
\widetilde{a}((\bu_h,\bz_h),(\bv_h,\widehat{\bv}_h))=\widetilde{b}((\bf,\bg),(\bv_h,\widehat{\bv}_h))\qquad\forall(\bv_h,\widehat{\bv}_h)\in\widetilde{\mG}_h.
\end{equation*}

It is easy to check that $(\bu_h,\bz_h)=\bT_{h}(\bf,\bg)$ if and only if 
\begin{equation}\label{zh_proyeccion}
\bz_h=\P_{\mG_h}\bf,
\end{equation}
where $\P_{\mG_h}$ is the $\mH$-orthogonal projection
onto $\mG_h$, and $\bu_h\in\mG_h$ solves
\begin{equation}\label{sol_discreta}
\ds\int_{\Omega}\rho c^{2}\div\bu_h\div\bv_h=-2\int_{\Omega}\nu\div \bf\div\overline{\bv}_h-\int_{\Omega}\rho\bg\cdot\overline{\bv}_h\qquad\forall\bv_h\in\mathcal{G}_h.
\end{equation}

Since $\bT_h(\widetilde{\mV})\subset\widetilde{\mG}_h$, there holds $\sp(\bT_h)=\sp(\bT_h|_{\widetilde{\mG}_h})\cup\{0\}$ (cf. \cite[Lemma 4.1]{BDMRS}). Thus, we will restrict  our attention to $\bT_h|_{\widetilde{\mG}_h}$. 

As claimed above,  at the beginning of Section~\ref{NUMERICOS}, Problem
\ref{PROBLEM4} will be shown to be  equivalent to a well posed
generalized matrix eigenvalue problem. This problem has $\lambda_h=0$ as an
eigenvalue with corresponding  eigenspace $\widetilde{\mK}_h$. The rest
of the eigenvalues are related with the spectrum of
$\bT_h|_{\widetilde{\mG}_h}$ according to the following lemma.
\begin{lmm}
There holds that $(\mu_h,(\bu_h,\bz_h))$ is an eigenpair of $\bT_{h}|_{\widetilde{\mG}_h}$ with $\mu_h\ne 0$
if and only if  $(\lambda_h,(\bu_h,\bz_h))$
is a solution of Problem~\ref{PROBLEM4} with $\lambda_h=1/\mu_h$.
\end{lmm}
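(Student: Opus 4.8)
The plan is to reproduce, almost verbatim, the argument used for Lemma~\ref{INVERSOS} in the continuous setting, the only new ingredients being that the identity $\bz=\bf$ of \eqref{z=f} is replaced by $\bz_h=\P_{\mG_h}\bf$ in \eqref{zh_proyeccion}, and that $\mG_h$ is orthogonal to $\mK_h$ not only in the $\mV$-inner product (by definition) but also in the $\mH$-inner product, since $\div\bw_h=\b0$ for every $\bw_h\in\mK_h$. In particular $\mV_h=\mG_h\oplus\mK_h$ is an orthogonal direct sum with respect to either inner product.

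For the direct implication I would start from an eigenpair $(\mu_h,(\bu_h,\bz_h))$ of $\bT_h|_{\widetilde{\mG}_h}$ with $\mu_h\neq0$, so that $(\bu_h,\bz_h)\in\mG_h\times\mG_h$ and $\bT_h(\bu_h,\bz_h)=\mu_h(\bu_h,\bz_h)$. Applying \eqref{zh_proyeccion} with $\bf=\bu_h$ and using $\bu_h\in\mG_h$ gives $\mu_h\bz_h=\P_{\mG_h}\bu_h=\bu_h$, i.e.\ $\bz_h=\lambda_h\bu_h$ with $\lambda_h:=1/\mu_h$; this is exactly the second equation of Problem~\ref{PROBLEM4}, and it holds for every $\widehat{\bv}_h\in\mV_h$. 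For the first equation I would take $\bf=\bu_h$, $\bg=\bz_h$ in \eqref{sol_discreta}, whose solution is the first component $\mu_h\bu_h$ of $\bT_h(\bu_h,\bz_h)$, and divide by $\mu_h$; this yields the first identity of Problem~\ref{PROBLEM4} for test functions in $\mG_h$. It then extends to all of $\mV_h=\mG_h\oplus\mK_h$ because both sides vanish on $\mK_h$: the left-hand side because $\div\bv_h=\b0$ there, and the right-hand side because, in addition, $\bz_h\in\mG_h$ is $\mH$-orthogonal to $\mK_h$.

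For the converse I would take a solution $(\lambda_h,(\bu_h,\bz_h))$ of Problem~\ref{PROBLEM4} with $\lambda_h\neq0$ and set $\mu_h:=1/\lambda_h$. Testing the first equation with $\bv_h\in\mK_h$ forces $\int_\Omega\rho\bz_h\cdot\overline{\bv}_h=0$, so $\bz_h\in\mG_h$; testing the second equation with an arbitrary $\widehat{\bv}_h\in\mV_h$ gives $\bz_h=\lambda_h\bu_h$ (since $\bz_h-\lambda_h\bu_h\in\mV_h$ is $\mH$-orthogonal to every element of $\mV_h$), whence $\bu_h\in\mG_h$ as well and $(\bu_h,\bz_h)\in\widetilde{\mG}_h$. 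It then remains to check that $\bT_h(\bu_h,\bz_h)=\mu_h(\bu_h,\bz_h)$, that is, that $(\mu_h\bu_h,\mu_h\bz_h)$ satisfies \eqref{zh_proyeccion}--\eqref{sol_discreta} with data $\bf=\bu_h$, $\bg=\bz_h$: identity \eqref{zh_proyeccion} reads $\mu_h\bz_h=\P_{\mG_h}\bu_h=\bu_h$, which is just $\bz_h=\lambda_h\bu_h$ combined with $\bu_h\in\mG_h$, while \eqref{sol_discreta} follows by multiplying the first equation of Problem~\ref{PROBLEM4} by $\mu_h$ and restricting the test functions to $\mG_h$.

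I do not anticipate any genuine difficulty: the proof is a transcription of Lemma~\ref{INVERSOS}. The only step needing a moment's care is the passage through the $\mH$-orthogonal projection $\P_{\mG_h}$ in \eqref{zh_proyeccion} (in place of the identity \eqref{z=f} available in the continuous case), together with the companion remark that $\mV$- and $\mH$-orthogonality to $\mK_h$ coincide, which is exactly what legitimizes both the extension of the first identity of Problem~\ref{PROBLEM4} from $\mG_h$ to $\mV_h$ and the simplification $\P_{\mG_h}\bu_h=\bu_h$.
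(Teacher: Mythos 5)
Your proposal is correct and follows exactly the route the paper takes: its proof of this lemma simply states that one argues as in Lemma~\ref{INVERSOS}, using the decomposition $\mV_h\times\mV_h=\widetilde{\mG}_h\oplus(\mK_h\times\mK_h)$, which is precisely what you carry out in detail. The two points you single out for care --- replacing $\bz=\bf$ by $\bz_h=\P_{\mG_h}\bf$ and noting that $\mV$- and $\mH$-orthogonality to $\mK_h$ coincide because $\div$ vanishes on $\mK_h$ --- are exactly the adaptations the paper's one-line proof implicitly relies on.
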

\begin{proof}
The proof follows essentially as that of Lemma~\ref{INVERSOS},
by using the fact that $\mV_h\times\mV_h=\widetilde{\mG}_h\oplus(\mK_h\times\mK_h).$
\end{proof}

Our next goal is to show that any isolated eigenvalue
of $\bT$ with algebraic multiplicity $m$ is approximated
by exactly $m$ eigenvalues of $\bT_{h}$ (repeated according to
their respective algebraic multiplicities) and that spurious
eigenvalues do not arise. With this end, we will adapt to our problem the
theory from \cite{BDRS}, which in turn use  arguments introduced in \cite{DNR1, DNR2} to deal with non compact operators. {}From now on,
let $\mu\in\spd(\bT)$, $\mu\ne 0$, be a fixed isolated eigenvalue of finite
algebraic multiplicity $m$. Let $\mathcal{E}$ be the invariant subspace of $\bT$
corresponding to $\mu$. Our analysis will
be based on proving the following two properties:
\begin{align*}
&\text{P1.}\quad \displaystyle\|\bT-\bT_{h}\|_h:=\sup_{\b0\ne(\bf_h,\bg_h)\in\widetilde{\mG}_h}
\frac{\|(\bT-\bT_{h})(\bf_h,\bg_h)\|_{\widetilde{\mV}}}{\Vert(\bf_h,\bg_h)\Vert_{\widetilde{\mV}}}\rightarrow 0\hspace{0.3cm}
\textrm{as} \hspace{0.15cm}h\rightarrow 0;\\
\newline
&\text{P2.}\quad \forall (\bv,\widehat{\bv})\in \mathcal{E},
\displaystyle\inf_{(\bv_h,\widehat{\bv}_h)\in\widetilde{\mG}_h}\|(\bv,\widehat{\bv})
-(\bv_h,\widehat{\bv}_h)\|_{\widetilde{\mV}}\rightarrow 0
\hspace{0.3cm} \textrm{as} \hspace{0.15cm} h\rightarrow 0.
\end{align*}

Let $(\bf_h,\bg_h)\in\widetilde{\mG}_h$ and $(\bu,\bz):=\bT(\bf_h,\bg_h)$.
{}From \eqref{sol_ortogonal},  we can write $\bu=\bu_1+\bu_2$
with $\bu_1,\bu_2\in\mG$ satisfying
\begin{equation}\label{problemu1}
\bu_1\in\mG:\hspace{0.3cm}\ds\int_{\Omega}\rho c^2\div\bu_1\div\overline{\bv}=-2\int_{\Omega}\nu\div\bf_h\div\overline{\bv}\qquad\forall\bv\in\mG,
\end{equation}
and 
\begin{equation}\label{problem_u2}
\bu_2\in\mG:\hspace{0.3cm}\ds\int_{\Omega}\rho c^2\div\bu_2\div\overline{\bv}=-\int_{\Omega}\rho\bg_h\cdot\overline{\bv}\qquad\forall\bv\in\mG.
\end{equation}

The following result states some properties
of the solutions of the problems above.
\begin{lmm}\label{u1u2}
For $(\bf_h,\bg_h)\in\widetilde{\mG}_h$, let $(\bu,\bz):=\bT(\bf_h,\bg_h)$
and consider the decomposition $\bu=\bu_1+\bu_2$ as above.
Hence, $\bu_1,\bu_2\in\H^s(\Omega)^d$, $\div\bu_1\in\mU_h$, $\div\bu_2|_{\Omega_i}\in\H^{1+s}(\Omega_i)$, $i=1,2$,  and the following estimates hold
\begin{equation}\label{regu1}
\|\bu_1\|_{s,\Omega}\leq C\|(\bf_h,\bg_h)\|_{\widetilde{\mV}},
\end{equation}
\begin{equation}\label{regu2}
\|\bu_2\|_{s,\Omega}+\|\div\bu_2\|_{1+s,\Omega_1}+\|\div\bu_2\|_{1+s,\Omega_2}
\leq C\|(\bf_h,\bg_h)\|_{\widetilde{\mV}}.
\end{equation}
\end{lmm}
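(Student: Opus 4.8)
The plan is to establish the regularity and the estimates for $\bu_1$ and $\bu_2$ separately, treating \eqref{problemu1} and \eqref{problem_u2} as two independent Neumann-type problems and invoking the results already available in the excerpt, chiefly Lemma~\ref{REGULARIDAD_DE_U} and the commuting diagram property \eqref{conmutativo}.

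First I would handle $\bu_1$. Since $\bu_1\in\mG$ solves \eqref{problemu1}, Lemma~\ref{REGULARIDAD_DE_U} immediately gives $\bu_1\in\H^s(\Omega)^d$ together with $\|\bu_1\|_{s,\Omega}\le C\|\div\bu_1\|_{0,\Omega}$. To bound the right-hand side, test \eqref{problemu1} with $\bv=\bu_1$ and use the ellipticity estimate \eqref{elipticidad1}: this yields $\min_\Omega\{\rho c^2\}\|\div\bu_1\|_{0,\Omega}^2\le 2\|\nu\|_{\infty,\Omega}\|\div\bf_h\|_{0,\Omega}\|\div\bu_1\|_{0,\Omega}$, hence $\|\div\bu_1\|_{0,\Omega}\le C\|\div\bf_h\|_{0,\Omega}\le C\|\bf_h\|_{\div,\Omega}\le C\|(\bf_h,\bg_h)\|_{\widetilde\mV}$, which gives \eqref{regu1}. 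For the claim $\div\bu_1\in\mU_h$: since $\bf_h\in\mG_h\subset\mV_h$, we have $\div\bf_h\in\mU_h$ (it is piecewise constant), and because \eqref{problemu1} also holds for test functions in $\mK$ (both sides vanish there), it holds for all $\bv\in\mV$; testing with $\bv\in\mathcal D(\Omega_i)^d$ shows $-\nabla(\rho c^2\div\bu_1)=-2\nabla(\nu\div\bf_h)$ in $\Omega_i$, so $\rho c^2\div\bu_1=2\nu\div\bf_h+\text{const}$ on each subdomain; since $\rho,c,\nu$ are constant on $\Omega_i$ and $\div\bf_h$ is piecewise constant on $\mathcal T_h(\Omega_i)$, it follows that $\div\bu_1$ is piecewise constant, i.e. $\div\bu_1\in\mU_h$. (One must check the interface/boundary conditions are compatible with this, but $\bu_1\in\mV$ already encodes the needed normal continuity.)

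Next I would treat $\bu_2$, which is where the extra regularity $\div\bu_2|_{\Omega_i}\in\H^{1+s}(\Omega_i)$ comes from. Exactly as in the proof of Lemma~\ref{T2COMPACTO}, \eqref{problem_u2} holds for all $\bv\in\mV$, so testing with $\bv\in\mathcal D(\Omega)^d$ gives $-\nabla(\rho c^2\div\bu_2)=\rho\bg_h$ in $\Omega$; since $\rho,c$ are positive constants on each $\Omega_i$ this yields $\div\bu_2|_{\Omega_i}\in\H^1(\Omega_i)$ with $\|\div\bu_2\|_{1,\Omega_i}\le C\|\bg_h\|_{0,\Omega_i}$. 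To upgrade $\H^1$ to $\H^{1+s}$ I would bootstrap: using $\bg_h\in\mG_h$ and Lemma~\ref{helmoltzdisc}, write $\bg_h=\tfrac1\rho\nabla\xi+\chi$ with $\tfrac1\rho\nabla\xi\in\H^s(\Omega)^d$, so that $\rho\bg_h=\nabla\xi+\rho\chi$; then $\rho c^2\div\bu_2$ solves (on each $\Omega_i$, where $\rho c^2$ is constant) a problem whose data has the regularity of $\nabla\xi\in\H^s$ plus $\rho\chi\in\mathcal{K}$, and elliptic regularity for the Neumann/Poisson problem on the polygon/polyhedron $\Omega_i$ raises the solution by one derivative to $\H^{1+s}$, with the quantitative bound $\|\div\bu_2\|_{1+s,\Omega_i}\le C(\|\xi\|_{s,\Omega}+\|\chi\|_{0,\Omega})\le C\|\bg_h\|_{0,\Omega}\le C\|(\bf_h,\bg_h)\|_{\widetilde\mV}$. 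Finally $\bu_2\in\H^s(\Omega)^d$ and $\|\bu_2\|_{s,\Omega}\le C\|\div\bu_2\|_{0,\Omega}\le C\|(\bf_h,\bg_h)\|_{\widetilde\mV}$ follow again from Lemma~\ref{REGULARIDAD_DE_U} and the energy estimate for \eqref{problem_u2}; collecting the three pieces gives \eqref{regu2}.

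The main obstacle is the $\H^{1+s}$ step for $\div\bu_2$: it requires a genuine elliptic regularity statement on the Lipschitz polygonal/polyhedral subdomains $\Omega_i$ for the interior equation $-\nabla(\rho c^2\div\bu_2)=\rho\bg_h$ — equivalently, that the scalar $p:=\rho c^2\div\bu_2$ satisfies $-\Delta p\in\H^{s}$ with appropriate (natural) boundary behavior on $\partial\Omega_i$, hence $p\in\H^{1+s}(\Omega_i)$ — and one has to be careful that the relevant $s$ is the same as (or can be taken no larger than) the one furnished by Lemma~\ref{REGULARIDAD_DE_U}, and that the boundary data on $\partial\Omega_i$ (part of which is the interface $\Gamma$) does not obstruct the shift. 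I expect this to be handled exactly by the references \cite{PETZ1,PETZ2} already cited for Lemma~\ref{REGULARIDAD_DE_U}, or by the analogous argument in \cite{BDRS}; everything else is the routine combination of Lax–Milgram energy bounds, the commuting diagram \eqref{conmutativo}, Lemma~\ref{helmoltzdisc}, and Lemma~\ref{REGULARIDAD_DE_U}.
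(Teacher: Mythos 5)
Your treatment of $\bu_1$ is essentially the paper's argument and is fine, but the $\bu_2$ part has a genuine gap at its very first step. You claim that, "exactly as in the proof of Lemma~\ref{T2COMPACTO}," equation \eqref{problem_u2} extends from $\mG$ to all of $\mV$, so that testing with $\bv\in\mathcal{D}(\Omega)^d$ yields $-\nabla(\rho c^2\div\bu_2)=\rho\bg_h$ in $\Omega$. That extension works in Lemma~\ref{T2COMPACTO} only because there the datum $\bg$ lies in $\mG$, hence $\int_\Omega\rho\bg\cdot\overline{\bv}=0$ for every $\bv\in\mK$. Here $\bg_h\in\mG_h$, which is orthogonal to $\mK_h$ but \emph{not} to $\mK$ (the paper stresses $\mG_h\nsubseteq\mG$); if $\bg_h$ were $\mH$-orthogonal to all of $\mK$ it would belong to $\mG$. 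So the right-hand side does not vanish on $\mK$, the equation cannot be extended to $\mV$ with $\rho\bg_h$ as data, and the distributional identity you start from is unjustified. The paper's fix is to first replace the datum: writing $\bg_h=\frac{1}{\rho}\nabla\xi+\chi$ via Lemma~\ref{helmoltzdisc}, the term $\int_\Omega\rho\chi\cdot\overline{\bv}$ drops out for $\bv\in\mG$ by $\mH$-orthogonality, leaving $\int_\Omega\rho c^2\div\bu_2\div\overline{\bv}=-\int_\Omega\nabla\xi\cdot\overline{\bv}$, and \emph{this} equation does extend to all of $\mV$ (both sides vanish on $\mK$, the right-hand side after integration by parts). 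Only then does localization give $\nabla(\rho c^2\div\bu_2)=-\nabla\xi$.

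Your second difficulty --- the "main obstacle" of upgrading $\H^1$ to $\H^{1+s}$ by an elliptic shift theorem on $\Omega_i$ --- is a misdiagnosis that disappears once the first point is fixed. The identity $\nabla(\rho c^2\div\bu_2)=-\nabla\xi$ is first order in the scalar $p:=\rho c^2\div\bu_2$: its gradient is explicitly $-\nabla\xi$, which already lies in $\H^s(\Omega_i)^d$ with $\|\nabla\xi\|_{s,\Omega_i}\leq C\|\div\bg_h\|_{0,\Omega}$ by Lemma~\ref{helmoltzdisc}, so $\div\bu_2|_{\Omega_i}\in\H^{1+s}(\Omega_i)$ with the stated bound follows immediately --- no regularity shift for a Neumann problem is needed, and none of the cited references has to be invoked at this point. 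By contrast, the route you propose keeps the term $\rho\chi$ in the data; since $\chi\in\mK$ is only an $\L^2$ function (the smallness $\|\chi\|_{0,\Omega}\leq Ch^s\|\div\bg_h\|_{0,\Omega}$ does not confer Sobolev regularity), $\nabla p=-\nabla\xi-\rho\chi$ would give at best $p\in\H^1(\Omega_i)$, and the conclusion $\H^{1+s}$ would not follow. In short: the decomposition of $\bg_h$ must be used to \emph{eliminate} $\chi$ from the variational equation by orthogonality before localizing, not merely to describe the regularity of the data afterwards.
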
 
\begin{proof}
Since  $\bu_1\in\mG$, due to Lemma~\ref{REGULARIDAD_DE_U}
we have that $\bu_1\in\H^s(\Omega)^d$  and
$\|\bu_1\|_{s,\Omega}\leq C\|(\bf_h,\bg_h)\|_{\widetilde{\mV}}$.
Moreover, note that \eqref{problemu1} also holds
for $\bv\in\mK$ and hence for all $\bv\in\mV$. Then,  we write 
\begin{equation*}\label{problem_u1}
\ds\int_{\Omega}(\rho c^2\div\bu_1+2
\nu\div\bf_h)\div\overline{\bv}=0\qquad\forall\bv\in\mV.
\end{equation*}
Thus, taking test functions in $\mathcal{D}(\Omega)^d\subset\mV$
we have $\nabla(\rho c^2\div\bu_1+2\nu\div\bf_h)=0$.
Since $\rho, c, \nu$ and $\div\bf_h$ are piecewise constant,
we have that $\div\bu_1$ is piecewise constant as well; namely,  $\div\bu_1\in\mU_h$.

On the other hand, since $\bu_2\in\mG$, by applying Lemma~\ref{REGULARIDAD_DE_U} again
we have that $\bu_2\in \H^s(\Omega)^d$ and $\|\bu_2\|_{s,\Omega}\leq C\|(\bf_h,\bg_h)\|_{\widetilde{\mV}}$. To prove additional
regularity for $\div\bu_2$, we use Lemma \ref{helmoltzdisc} to write $\bg_h=\frac{1}{\rho}\nabla\xi+\chi$
with $\chi\in\mK$, $\frac{1}{\rho}\nabla\xi\in\H^s(\Omega)^d$ and $\|\frac{1}{\rho}\nabla\xi\|_{s,\Omega}\leq C\|\div\bg_h\|_{0,\Omega}$.
Moreover, since $\rho$ is constant in each subdomain $\Omega_i$, also $\nabla\xi|_{\Omega_i}\in\H^s(\Omega_i)^d$, $i=1,2$.
Then, from \eqref{problem_u2} we have that
\begin{equation*}
\ds\int_{\Omega}\rho c^2\div\bu_2\div\overline{\bv}=
-\int_{\Omega}\nabla\xi\cdot\overline{\bv}\qquad\forall\bv\in\mG.
\end{equation*}
Since the above equation trivially holds for $\bv\in\mK$ too,
it holds for all $\bv\in\mV$. Then, by testing it with $\bv\in\mathcal{D}(\Omega)^d$
we have that $\nabla(\rho c^2\div\bu_2)=-\nabla\xi\in\Omega$.
Therefore, by restricting to $\Omega_i$, $i=1,2$, we have that
$\nabla(\rho c^2\div\bu_2|_{\Omega_i})=-\nabla(\xi|_{\Omega_i})\in\H^s(\Omega_i)^d$.
Since $\rho$ and $c$ are  piecewise constant, we conclude that
$\div\bu_2|_{\Omega_i}\in\H^{1+s}(\Omega_i)$,  $i=1,2$,
and $$\|\div\bu_2\|_{1+s,\Omega_1}+\|\div\bu_2\|_{1+s,\Omega_2}\leq C\|\nabla\xi\|_{0,\Omega}\leq C\|\div\bg_h\|_{0,\Omega}.$$ 
Hence, we conclude the proof.
\end{proof}

We consider a similar decomposition in the discrete case.
For $(\bf_h,\bg_h)\in\widetilde{\mG}_h$,
let $(\bu_h,\bz_h):=\bT_h(\bf_h,\bg_h)$.
We write $\bu_h=\bu_{1h}+\bu_{2h}$ with $\bu_{1h}$ and $\bu_{2h}$
satisfying 
\begin{equation}\label{problemdisc_u1}
\bu_{1h}\in\mG_h:\hspace{0.3cm}\ds\int_{\Omega}\rho c^2\div\bu_{1h}\div\overline{\bv}_h=-2\int_{\Omega}\nu\div\bf_h\div\overline{\bv}_h\qquad\forall\bv_h\in\mG_h,
\end{equation}
and
\begin{equation}\label{problemdisc_u2}
\bu_{2h}\in\mG_h:\hspace{0.3cm}\ds\int_{\Omega}\rho c^2\div\bu_{2h}\div\overline{\bv}_h=-\int_{\Omega}\rho\bg_h\cdot\overline{\bv}_h\qquad\forall\bv_h\in\mG_h.
\end{equation}

These are the
finite element discretization of problems~\eqref{problemu1}
and \eqref{problem_u2}, respectively, and the following error estimates hold true.
\begin{lmm}\label{errores_ui}
Let $(\bf_h,\bg_h)\in\widetilde{\mG}_h$.
Let $\bu_1,\bu_2$ be the solutions of problems~\eqref{problemu1}
and \eqref{problem_u2}, respectively, and $\bu_{1h},\bu_{2h}$
those of  problems \eqref{problemdisc_u1}
and \eqref{problemdisc_u2}, respectively. Then, the following estimates hold true:
\begin{equation}\label{erroru1}
\|\bu_{1}-\bu_{1h}\|_{\div,\Omega}\leq Ch^s\|(\bf_h,\bg_h)\|_{\widetilde{\mV}},
\end{equation}
\begin{equation}\label{erroru2}
\|\bu_{2}-\bu_{2h}\|_{\div,\Omega}\leq Ch^s\|(\bf_h,\bg_h)\|_{\widetilde{\mV}}.
\end{equation}
\end{lmm}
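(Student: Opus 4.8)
The plan is to prove each estimate by a Céa-type argument, since \eqref{problemdisc_u1} and \eqref{problemdisc_u2} are conforming Galerkin discretizations of \eqref{problemu1} and \eqref{problem_u2} in the \emph{same} space $\mG_h$, and $a(\cdot,\cdot)$ is $\mG_h$-elliptic with a constant independent of $h$ (by the lemma preceding this one). The only subtlety is that $\mG_h\nsubseteq\mG$, so the exact solutions $\bu_1,\bu_2$ and the discrete ones $\bu_{1h},\bu_{2h}$ solve problems posed on different spaces; hence one must use Strang's first lemma rather than plain Céa. Concretely, for any $\bv_h\in\mG_h$ and any $\bw_h\in\mG_h$ one writes, using ellipticity,
$$\min_\Omega\{\rho c^2\}\,\|\bu_{1h}-\bv_h\|_{\div,\Omega}^2\le a(\bu_{1h}-\bv_h,\bu_{1h}-\bv_h),$$
and then replaces $a(\bu_{1h},\cdot)$ using \eqref{problemdisc_u1} and subtracts the analogous identity for $\bu_1$, \eqref{problemu1}; the mismatch of the test spaces produces a consistency term that must be controlled.

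For \eqref{erroru1}, the key observation from Lemma~\ref{u1u2} is that $\div\bu_1\in\mU_h$, i.e.\ $\div\bu_1$ is piecewise constant. This is decisive: it means $\div(\Pi_h\bu_1)=\P_h(\div\bu_1)=\div\bu_1$ by \eqref{conmutativo}, so $\Pi_h\bu_1$ and $\bu_1$ have the same divergence, and the bilinear form $a(\cdot,\cdot)$, which sees only divergences, "does not notice" the interpolation error in $\bu_1$. I would take $\bv_h$ to be the $a$-orthogonal projection (or a suitable quasi-interpolant) of $\bu_1$ onto $\mG_h$, exploit that $a(\bu_{1h}-\bu_1,\bv_h)$ involves only $\div\bu_1-\div\bv_h$, and bound the consistency term $2\int_\Omega\nu\div\bf_h\div\overline{\bv}_h-2\int_\Omega\nu\div\bf_h\div\overline{\bv}$ using that $\div\bf_h$ is piecewise constant as well (since $\bf_h\in\mG_h\subset\mV_h$), combined with the interpolation estimate \eqref{error_inter_RT} applied to $\bu_1\in\H^s(\Omega)^d$ and the regularity bound \eqref{regu1}. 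Each stray term carries a factor $h^s$ times $\|(\bf_h,\bg_h)\|_{\widetilde\mV}$, giving \eqref{erroru1}.

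For \eqref{erroru2}, the situation is the standard one for a mixed/Raviart--Thomas source problem: by Lemma~\ref{u1u2} we have $\bu_2\in\H^s(\Omega)^d$ with $\div\bu_2|_{\Omega_i}\in\H^{1+s}(\Omega_i)$ and the bound \eqref{regu2}. Again by Strang's lemma, $\|\bu_2-\bu_{2h}\|_{\div,\Omega}$ is controlled by the best approximation error $\inf_{\bv_h\in\mG_h}\|\bu_2-\bv_h\|_{\div,\Omega}$ plus a consistency term coming from the right-hand sides $-\int_\Omega\rho\bg_h\cdot\overline{\bv}_h$ versus $-\int_\Omega\rho\bg_h\cdot\overline{\bv}$. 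I would estimate the best approximation term by taking $\bv_h=\Pi_h\bu_2$ (well defined since $\bu_2\in\H^s(\Omega)^d$): \eqref{error_inter_RT} gives $\|\bu_2-\Pi_h\bu_2\|_{0,\Omega}\le Ch^s(\|\bu_2\|_{s,\Omega}+\|\div\bu_2\|_{0,\Omega})$, while \eqref{conmutativo} and \eqref{errorL2} give $\|\div\bu_2-\div\Pi_h\bu_2\|_{0,\Omega}=\|\div\bu_2-\P_h\div\bu_2\|_{0,\Omega}\le Ch\,(\|\div\bu_2\|_{1,\Omega_1}+\|\div\bu_2\|_{1,\Omega_2})$; the extra regularity $\H^{1+s}$ of $\div\bu_2$ is exactly what is needed to make this term $O(h^{1+s})=O(h^s)$, consistent with the claimed rate. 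All factors are then bounded by $\|(\bf_h,\bg_h)\|_{\widetilde\mV}$ via \eqref{regu2}.

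The main obstacle is the handling of the nonconformity $\mG_h\nsubseteq\mG$ and the resulting consistency terms — one must be careful that the test function used in the discrete equation lies in $\mG_h$ while the one in the continuous equation lies in $\mG$, and that the discrete Helmholtz decomposition of Lemma~\ref{helmoltzdisc} (with its $O(h^s)$-small $\mK$-component) is what lets the cross terms be absorbed. Once the bookkeeping of these mismatched test spaces is organized correctly, each offending term is seen to be $O(h^s)\|(\bf_h,\bg_h)\|_{\widetilde\mV}$, and the two estimates follow.
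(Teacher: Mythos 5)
Your proposal follows essentially the same route as the paper: a Strang-type argument in $\mG_h$, the Raviart--Thomas interpolant together with the commuting property \eqref{conmutativo}, the fact that $\div\bu_1\in\mU_h$ to kill the divergence part of the interpolation error for $\bu_1$, the extra regularity $\div\bu_2|_{\Omega_i}\in\H^{1+s}(\Omega_i)$ for the divergence part for $\bu_2$, and the discrete Helmholtz decomposition of Lemma~\ref{helmoltzdisc} to absorb the consistency term. Three small points to tighten: the relevant tool is the \emph{second} (nonconforming) Strang lemma, not the first, since it is the spaces rather than the forms that are perturbed; for $\bu_1$ the consistency term does not merely carry a factor $h^s$ --- it vanishes identically, because \eqref{problemu1} extends to all test functions in $\mV\supset\mG_h$; and since $\Pi_h\bu_2\in\mV_h$ need not lie in $\mG_h$, it cannot be inserted directly into the infimum over $\mG_h$ --- one must first split $\Pi_h\bu_2=\widetilde{\bu}_{2h}+\check{\bu}_h$ along $\mV_h=\mG_h\oplus\mK_h$ and use the $\mV$-orthogonality of $\bu_2-\widetilde{\bu}_{2h}$ to $\check{\bu}_h$ to transfer the interpolation bound to $\widetilde{\bu}_{2h}\in\mG_h$ (and likewise for $\bu_1$).
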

\begin{proof}
Since $\mG_h\nsubseteq\mG$, we will resort to the second Strang
Lemma, which for
problems~\eqref{problemu1} and \eqref{problemdisc_u1}
reads as follows:
\begin{equation}\label{strangu1}
\ds\|\bu_1-\bu_{1h}\|_{\div,\Omega}\leq C\left[\inf_{\bv_{h}\in\mG_h}\|\bu_1-\bv_{h}\|_{\div,\Omega}+\sup_{\b0\neq\bv_h\in\mG_h}\frac{a(\bu_1-\bu_{1h},\bv_h)}{\|\bv_h\|_{\div,\Omega}}\right]. 
\end{equation} 
Because of Lemma~\ref{u1u2}, $\Pi_h\bu_1$ is well defined.  Since $\Pi_{h}\bu_1\in\mV_h=\mG_h\oplus\mK_h$, there exists $\widetilde{\bu}_{1h}\in\mG_h$
and $\breve{\bu}_h\in\mK_h$ such that $\Pi_{h}\bu_1=\widetilde{\bu}_{1h}+\breve{\bu}_h$.
Then, since $\bu_1-\widetilde{\bu}_{1h}$ is orthogonal to $\breve{\bu}_h$, we observe that
\begin{align}
\nonumber\|\bu_1-\widetilde{\bu}_{1h}\|_{\mV}^2 &\leq\|\bu_1-\widetilde{\bu}_{1h}\|_{\mV}^2
+\|\breve{\bu}_h\|_{\mV}^2\\
\nonumber&=\|(\widetilde{\bu}_{1h}-\bu_1)+\breve{\bu}_h\|_{\mV}^2
=\|\bu_1-\Pi_h\bu_1\|_{\mV}^2\\
\nonumber &\leq C\left(\|\bu_1-\Pi_h\bu_1\|_{0,\Omega}^2
+\|\div\bu_1-\div(\Pi_h\bu_1)\|_{0,\Omega}^2\right).
\end{align}
The first term on the right hand side above is bounded as follows:
\begin{equation*}
\|\bu_1-\Pi_h\bu_1\|_{0,\Omega}\leq Ch^s(\|\bu_1\|_{s,\Omega}+\|\div\bu_1\|_{0,\Omega})\leq Ch^s\|(\bf_h,\bg_h)\|_{\widetilde{\mV}},
\end{equation*}
where we have used \eqref{error_inter_RT}, \eqref{regu1},  and the fact that
$\|\div\bu_1\|_{0,\Omega}\leq C\|\div\bf_h\|_{0,\Omega}$, which in turn follows from \eqref{problemu1} by taking $\bv=\bf_h$. 
On the other hand,  the second term vanishes because of \eqref{conmutativo} since  $\div\bu_1\in\mU_h$
(cf. Lemma~\ref{u1u2}). Hence, $\|\bu_1-\widetilde{\bu}_{1h}\|_{\div,\Omega}\leq Ch^s \|(\bf_h,\bg_h)\|_{\widetilde{\mV}}$, which allows us to control the approximation term in \eqref{strangu1}. 

For the consistency term, it is enough to recall
 that \eqref{problemu1}
holds for all $\bv\in\mV$. Then, by using \eqref{problemdisc_u1},
it is easy to check that $a(\bu_1-\bu_{1h},\bv_h)=0$ for all
$\bv_h\in\mG_h\subset\mV$. {}From this, the Strang estimate
for  $\|\bu_1-\bu_{1h}\|_{\div,\Omega}$ reads as follows:
\begin{equation*}\label{cota1}
\ds\|\bu_1-\bu_{1h}\|_{\div,\Omega}\leq C\inf_{\bv_{h}\in\mG_h}\|\bu_1-\bv_{h}\|_{\div,\Omega}\leq Ch^s\|(\bf_h,\bg_h\|_{\widetilde{\mV}}.
\end{equation*}
Thus \eqref{erroru1} holds true.

To prove \eqref{erroru2}, we resort again to the second Strang Lemma: 
\begin{equation}\label{strangu21}
\ds\|\bu_2-\bu_{2h}\|_{\div,\Omega}\leq C\left[\inf_{\bv_{h}\in\mG_h}\|\bu_2-\bv_{h}\|_{\div,\Omega}
+\sup_{\b0\neq\bv_h\in\mG_h}\frac{a(\bu_2-\bu_{2h},\bv_h)}{\|\bv_h\|_{\div,\Omega}}\right]. 
\end{equation} 

Since $\bu_2\in\H^s(\Omega)^d$  (cf. Lemma~\ref{u1u2}),
we have that $\Pi_h\bu_2$ is well defined.
We proceed as above and write $\Pi_{h}\bu_2=\widetilde{\bu}_{2h}+\check{\bu}_h$
with $\widetilde{\bu}_{2h}\in\mG_h$ and $\check{\bu}_h\in\mK_h$ to obtain
\begin{equation}
\|\bu_2-\widetilde{\bu}_{2h}\|_{\div,\Omega}\leq
C\left[\|\bu_2-\Pi_h\bu_2\|_{0,\Omega}+\|\div\bu_2-\div(\Pi_h\bu_2)\|_{0,\Omega}\right].
\end{equation}
For the first term on the right hand side above,  \eqref{error_inter_RT} and  Lemma \ref{u1u2}
yield 
\begin{equation*}
\|\bu_2-\Pi_h\bu_2\|_{0,\Omega}\leq Ch^s(\|\bu_2\|_{s,\Omega}
+\|\div\bu_2\|_{0,\Omega})\leq Ch^s\|(\bf_h,\bg_h)\|_{\widetilde{\mV}}.
\end{equation*}
For the second term, we have from \eqref{errorL2} and from Lemma \ref{u1u2} again
\begin{align*}
\ds\|\div\bu_2-\div\Pi_h\bu_2\|_{0,\Omega}^2&=\|\div\bu_2-\P_h(\div\bu_2)\|_{0,\Omega}^2,\\
&\leq Ch(\|\div\bu_2\|_{1,\Omega_1}+\|\div\bu_2\|_{1,\Omega_2})\leq Ch\|(\bf_h,\bg_h)\|_{\widetilde{\mV}}.
\end{align*}
Hence, $\|\bu_2-\widetilde{\bu}_{2h}\|_{\div,\Omega}\leq Ch^s\|(\bf_h,\bg_h)\|_{\widetilde{\mV}}$,
which allows us to bound the approximation term in \eqref{strangu21}.

For the consistency term, given  $\bv_h\in\mG_h$ we apply Lemma~\ref{helmoltzdisc} to write 
$\bv_h=\frac{1}{\rho}\nabla\xi+\chi$ with $\frac{1}{\rho}\nabla\xi\in\H^{s}(\Omega)^d$,
$\chi\in\mK$, and $\|\chi\|_{0,\Omega}\leq Ch^s\|\div\bv_h\|_{0,\Omega}$. Then, from \eqref{problem_u2} we have
\begin{equation*}
\ds a(\bu_2,\bv_h)=\int_{\Omega}\rho c^2\div\bu_2\div\obv_h=
\int_{\Omega}\rho c^2\div\bu_2\div\left(\frac{1}{\rho}\nabla\overline{\xi}\right)
=\int_{\Omega}\bg_h\cdot\nabla\overline{\xi}.
\end{equation*}
On the other hand, from \eqref{problemdisc_u2}, 
\begin{equation*}
\ds a(\bu_{2h},\bv_h)=\int_{\Omega}\rho c^2\div\bu_{2h}\div\obv_h=\int_{\Omega}\rho\bg_h\cdot\obv_h
=\int_{\Omega}\bg_h\cdot\nabla\overline{\xi}+\int_{\Omega}\rho\bg_h\cdot\overline{\chi}.
\end{equation*}
Therefore,
\begin{equation*}
 a(\bu_2-\bu_{2h},\bv_h)=-\int_{\Omega}\rho\bg_h\cdot\overline{\chi}
\leq Ch^s\|\bg_h\|_{0,\Omega}\|\bv_h\|_{\div,\Omega}
\end{equation*}
and, hence,
\begin{equation*}\label{supremo}
\ds\sup_{\b0\ne\bv_h\in\mG_h}\frac{a(\bu_2-\bu_{2h},\bv_h)}{\|\bv_h\|_{\div,\Omega}}\leq Ch^s\|\bg_h\|_{0,\Omega},
\end{equation*}
which allows us to complete the proof.
\end{proof}

Now, we are in a position to establish the following
result.
\begin{lmm}\label{P1}
Property P1 holds true. Moreover, 
$$\|\bT-\bT_{h}\|_h\leq Ch^s.$$
\end{lmm}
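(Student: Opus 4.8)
The plan is to reduce the bound on $\|\bT-\bT_h\|_h$ to the error estimates already established in Lemma~\ref{errores_ui}. Since that norm only tests against arguments lying in $\widetilde{\mG}_h$, I would fix $(\bf_h,\bg_h)\in\widetilde{\mG}_h$ and set $(\bu,\bz):=\bT(\bf_h,\bg_h)$ and $(\bu_h,\bz_h):=\bT_h(\bf_h,\bg_h)$. The first point to notice is that the two auxiliary components coincide: \eqref{z=f} gives $\bz=\bf_h$, while \eqref{zh_proyeccion} gives $\bz_h=\P_{\mG_h}\bf_h$; since $(\bf_h,\bg_h)\in\widetilde{\mG}_h=\mG_h\times\mG_h$ means $\bf_h\in\mG_h$, the $\mH$-orthogonal projection acts as the identity, so $\bz_h=\bf_h=\bz$. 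Consequently
\begin{equation*}
\|(\bT-\bT_h)(\bf_h,\bg_h)\|_{\widetilde{\mV}}=\|\bu-\bu_h\|_{\mV},
\end{equation*}
and the whole estimate collapses onto the first component.

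It then remains to bound $\|\bu-\bu_h\|_{\mV}$. Here I would use the decompositions $\bu=\bu_1+\bu_2$ and $\bu_h=\bu_{1h}+\bu_{2h}$ introduced in \eqref{problemu1}--\eqref{problemdisc_u2}, the triangle inequality, and the equivalence of $\|\cdot\|_{\mV}$ with $\|\cdot\|_{\div,\Omega}$, to obtain
\begin{equation*}
\|\bu-\bu_h\|_{\mV}\leq C\bigl(\|\bu_1-\bu_{1h}\|_{\div,\Omega}+\|\bu_2-\bu_{2h}\|_{\div,\Omega}\bigr).
\end{equation*}
Invoking \eqref{erroru1} and \eqref{erroru2} of Lemma~\ref{errores_ui} then gives $\|\bu-\bu_h\|_{\mV}\leq Ch^s\|(\bf_h,\bg_h)\|_{\widetilde{\mV}}$. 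Dividing by $\|(\bf_h,\bg_h)\|_{\widetilde{\mV}}$ and taking the supremum over $\b0\ne(\bf_h,\bg_h)\in\widetilde{\mG}_h$ yields $\|\bT-\bT_h\|_h\leq Ch^s$; since $s>0$, Property~P1 follows.

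In this argument the substantive work --- the extra regularity of $\bu_1$ and $\bu_2$ (Lemma~\ref{u1u2}) and the Strang-type estimates for the mixed discretization on $\mG_h\nsubseteq\mG$ (Lemma~\ref{errores_ui}) --- has already been carried out, so the proof of this lemma is essentially assembly and there is no serious obstacle. The one step that genuinely matters is the observation that the discrete auxiliary variable reproduces $\bf_h$ exactly; this is what forces the $\mH$-component of the error to vanish. Without restricting the test arguments to $\widetilde{\mG}_h$ one would instead pick up a term of the form $\|\bf-\P_{\mG_h}\bf\|_{\mH}$, which need not tend to zero --- a manifestation of the noncompactness of $\bT$.
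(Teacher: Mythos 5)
Your proposal is correct and follows essentially the same route as the paper's proof: the observation that $\bz-\bz_h=\bf_h-\P_{\mG_h}\bf_h=\b0$ for arguments in $\widetilde{\mG}_h$, followed by the decomposition $\bu=\bu_1+\bu_2$, $\bu_h=\bu_{1h}+\bu_{2h}$ and an appeal to Lemma~\ref{errores_ui}. Your closing remark about why restricting to $\widetilde{\mG}_h$ is essential is a fair articulation of the point the paper leaves implicit.
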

\begin{proof}
For $(\bf_h,\bg_h)\in\widetilde{\mG}_h$, let
$(\bu,\bz):=\bT(\bf_h,\bg_h)$ and $(\bu_h,\bz_h):=\bT_{h}(\bf_h,\bg_h)$.
{}From  \eqref{z=f}
and \eqref{zh_proyeccion} we have that $\bz-\bz_h=\bf_h-\P_{\mG_h}\bf_h=0$. Hence, by writing $\bu=\bu_{1}+\bu_{2}$ and $\bu_h=\bu_{1h}+\bu_{2h}$ as in  Lemma~\ref{errores_ui}, we have from this lemma

\begin{equation*}
\ds\|\bT-\bT_{h}\|_h
\leq\sup_{\b0\ne(\bg_h,\bf_h)\in\widetilde{\mG}_h}
\frac{\|\bu_1-\bu_{1h}\|_{\div,\Omega}+\|\bu_2-\bu_{2h}\|_{\div,\Omega}}{\|(\bf_h,\bg_h)\|_{\widetilde{\mV}}}
\leq Ch^s.
\end{equation*}
Thus, we conclude the proof.
\end{proof}

Our next goal is to prove property P2.
With this aim, first we will prove the following additional regularity result.

\begin{lmm}\label{ESTIMACIONES_UV}
Let $(\bu,\bz)\in \mathcal{E}$. Then, $\bu,\bz\in\mG\subset\H^s(\Omega)^d$,
$\div\bu,\div\bz\in\H^{1+s}(\Omega_i)$,  $i=1,2$, and 
\begin{align}
\|\bu\|_{s,\Omega}+\|\div\bu\|_{1+s,\Omega_1}+\|\div\bu\|_{1+s,\Omega_2}\leq&
C \|(\bu,\bz)\|_{\widetilde{\mV}},\label{udivu1}\\
\|\bz\|_{s,\Omega}+\|\div\bz\|_{1+s,\Omega_1}+\|\div\bz\|_{1+s,\Omega_2}\leq&
C \|(\bu,\bz)\|_{\widetilde{\mV}}.\label{udivu2}
\end{align}
\end{lmm}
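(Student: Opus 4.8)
The plan is to exploit the fact that any $(\bu,\bz)\in\mathcal{E}$ is a finite linear combination of generalized eigenfunctions (Jordan chains) associated with the nonzero isolated eigenvalue $\mu=1/\lambda$ of $\bT$, and that such pairs solve Problem~\ref{PROBLEM2} up to lower-order terms in the chain. The base case of each Jordan chain is a genuine eigenpair, i.e.\ a solution of Problem~\ref{PROBLEM2} with $\bu,\bz\in\mG$; more generally a length-$k$ chain element $(\bu^{(j)},\bz^{(j)})$ satisfies $\widetilde a((\bu^{(j)},\bz^{(j)}),\cdot)=\frac1\mu\widetilde b((\bu^{(j)},\bz^{(j)}),\cdot)+\frac1\mu\widetilde b((\bu^{(j-1)},\bz^{(j-1)}),\cdot)$ on $\widetilde{\mG}$. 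In all cases the right-hand side is controlled by $\|(\bu,\bz)\|_{\widetilde\mV}$ (and the norms of the earlier chain elements, which are themselves bounded by a constant times $\|(\bu,\bz)\|_{\widetilde\mV}$ since $\mathcal E$ is finite dimensional). Since $\mathcal E\subset\widetilde{\mG}=\mG\times\mH$ and in fact $\bT(\widetilde{\mV})\subset\widetilde{\mG}$ forces $\bz=\mu\lambda^{-1}\cdots$ to lie in $\mG$ as well, we immediately get $\bu,\bz\in\mG\subset\H^s(\Omega)^d$ from Lemma~\ref{REGULARIDAD_DE_U}, with $\|\bu\|_{s,\Omega}+\|\bz\|_{s,\Omega}\le C(\|\div\bu\|_{0,\Omega}+\|\div\bz\|_{0,\Omega})\le C\|(\bu,\bz)\|_{\widetilde\mV}$.

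Next I would derive the divergence regularity by recognizing that the relevant components of the defining equations, restricted to $\mathcal D(\Omega_i)^d$, are precisely of the form already treated in Remark~\ref{distintodecero} and in Lemma~\ref{u1u2}: testing the first equation of Problem~\ref{PROBLEM2} (plus the chain correction) with $\bv\in\mathcal D(\Omega_i)^d$ gives $\nabla(\rho c^2\div\bu)= -\lambda(2\nu\,\nabla\div\bu) - \lambda\rho\bz + (\text{chain term involving }\bz^{(j-1)})$ in $\Omega_i$, hence $\nabla((\rho c^2+2\lambda\nu)\div\bu)$ equals an $\H^s(\Omega_i)^d$ function (here $\bz\in\H^s$, and a lower chain term if present is also in $\H^s$ by induction). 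Since $\rho c^2+2\lambda\nu$ is a nonzero constant on each $\Omega_i$ (the case $\rho c^2+2\lambda\nu=0$ forces $\bu=\b0$ on $\Omega_i$ as in Remark~\ref{distintodecero}), we conclude $\div\bu|_{\Omega_i}\in\H^{1+s}(\Omega_i)$ with the corresponding bound. The same argument applied to the equation satisfied by $\bz$ — which from \eqref{linear2} reads $\bz=\lambda\bu$ on the eigenfunction level (plus a chain correction), so $\div\bz=\lambda\div\bu$ up to a lower-order term — yields $\div\bz|_{\Omega_i}\in\H^{1+s}(\Omega_i)$ and \eqref{udivu2}. An induction on the length of the Jordan chain, bounding each element's norms in terms of $\|(\bu,\bz)\|_{\widetilde\mV}$ using finite-dimensionality of $\mathcal E$ and equivalence of all norms on it, then gives the estimates for an arbitrary element of $\mathcal E$.

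The main obstacle I anticipate is handling the generalized (non-semisimple) eigenfunctions cleanly: one must keep track of the Jordan-chain correction terms through both the regularity bootstrap and the a priori estimates, and verify that the constant $C$ can be taken uniform over $\mathcal E$. This is essentially bookkeeping — the key point is that $\mathcal E$ is finite dimensional, so $\|(\bu^{(j-1)},\bz^{(j-1)})\|_{\widetilde\mV}\le C\|(\bu,\bz)\|_{\widetilde\mV}$ automatically, and each bootstrap step only uses the previously established regularity of the lower chain element together with Lemma~\ref{REGULARIDAD_DE_U} and the piecewise-constancy of $\rho$, $c$, $\nu$. A secondary minor point is the case distinction $\rho c^2+2\lambda\nu=0$ on some $\Omega_i$: there $\bu$ (and then $\bz$) vanishes identically, so the divergence regularity is trivial on that subdomain; this must be mentioned but requires no work beyond citing Remark~\ref{distintodecero}.
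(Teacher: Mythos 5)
Your proposal follows essentially the same route as the paper's proof: induction along Jordan chains, membership in $\mG$ combined with Lemma~\ref{REGULARIDAD_DE_U} for the $\H^s(\Omega)^d$ regularity, the distributional identity $\nabla\left((\mu\rho c^2+2\nu)\div\bu_k\right)=\rho\bz_k-\nabla(\rho c^2\div\bu_{k-1})$ on each subdomain (exploiting piecewise constancy of the coefficients) for the $\H^{1+s}(\Omega_i)$ regularity of the divergences, and the argument of Remark~\ref{distintodecero} for the degenerate case $\mu\rho c^2+2\nu=0$, with only the cosmetic difference that you bound the lower chain elements by norm equivalence on the finite-dimensional space $\mathcal{E}$ while the paper uses boundedness of $\bT$. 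The one imprecision is the intermediate chain $\|\bz\|_{s,\Omega}\leq C\|\div\bz\|_{0,\Omega}\leq C\|(\bu,\bz)\|_{\widetilde{\mV}}$, whose second inequality does not follow from the definition of the product norm on $\widetilde{\mV}=\mV\times\mH$ (which controls $\bz$ only in $\L^2$); it is, however, recovered at once from the relation $\mu\bz_k+\bz_{k-1}=\bu_k$ that you already invoke for the divergence regularity, which is exactly how the paper bounds $\|\bz_k\|_{s,\Omega}$.
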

\begin{proof}
We prove the above inequalities for all the generalized
eigenfunctions of  $\bT$. Let $\{(\bu_k,\bz_k)\}_{k=1}^p$ be a Jordan chain of the operator $\bT$ associated with $\mu$.
Then, $\bT(\bu_k,\bz_k)=\mu(\bu_k,\bz_k)+(\bu_{k-1}, \bz_{k-1})$, $k=1,\ldots,p$, with $(\bu_0,\bz_0)=\b0$.
 We will use an induction argument on $k$.
Assume that $\bu_{k-1}$ and $\bz_{k-1}$ belong to $\mG$ and
satisfy \eqref{udivu1} and \eqref{udivu2}, respectively (which obviously hold for $k=1$).
First note that, because of the boundedness of $\bT$, we have
\begin{equation}\label{bounghyt}
\|(\bu_{k-1},\bz_{k-1})\|_{\widetilde{\mV}}\leq
C\|(\bu_k,\bz_k)\|_{\widetilde{\mV}}.
\end{equation}
On the other hand, by using \eqref{z=f} and \eqref{sol_ortogonal} we have that
\begin{equation}\label{identvkuk}
\mu\bz_k+\bz_{k-1}=\bu_k\hspace{0.2cm}\mbox{ in}\ \Omega
\end{equation}
and that $\mu\bu_k+\bu_{k-1}\in\mG$ satisfies
\begin{equation*}
\ds\int_{\Omega}\rho c^2\div(\mu\bu_k+\bu_{k-1})\div\obv=
-2\int_{\Omega}\nu\div\bu_k\div\obv
-\int_{\Omega}\rho\bz_k\cdot\obv\qquad
\forall\bv\in\mG.
\end{equation*}
Hence, $\bu_k,\bz_k\in\mG$.

We observe that the equation
above also holds for any $\bv\in\mK$. Then, 
\begin{equation*}
\ds\int_{\Omega}\rho c^2\div(\mu\bu_k+\bu_{k-1})\div\obv=
-2\int_{\Omega}\nu\div\bu_k\div\obv
-\int_{\Omega}\rho\bz_k\cdot\obv\qquad
\forall\bv\in\mV.
\end{equation*}
Thus, considering test functions
in $\mathcal{D}(\Omega)^d\subset\mV$ we obtain
\begin{equation}\label{nablas}
\ds\nabla((\mu\rho c^2+2\nu)\div\bu_k)=\rho\bz_k-\nabla(\rho c^2\div\bu_{k-1}).
\end{equation}
Let us assume that $\mu\rho c^2+2\nu\ne 0$ in both  $\Omega_1$ and $\Omega_2$ (we discuss the other case at the end of the proof). 
Hence, since $\rho$, $c$, and $\nu$ are constant in each $\Omega_i$, $\rho_i\bz_k-\nabla(\rho_i c_i^2\div\bu_{k-1})\in \L^2(\Omega_i)^d$,
$\div\bu_k|_{\Omega_i}\in\H^1(\Omega_i)$, and
\begin{equation*}
\|\div\bu_k\|_{1,\Omega_i}\leq C\left(\|\div\bu_{k-1}\|_{1,\Omega_i}
+\|(\bu_k,\bz_k)\|_{\widetilde{\mV}}\right),\qquad i=1,2.
\end{equation*}

Now, since $\bu_k\in\mG$, due to Lemma \ref{REGULARIDAD_DE_U} 
we have that $\bu_k\in\H^s(\Omega)^d$. Then, from \eqref{estima}
and the previous estimate we have 
\begin{equation}\label{cotauk1}
\|\bu_k\|_{s,\Omega}\leq C\left(\|\div\bu_{k-1}\|_{1,\Omega_1}
+\|\div\bu_{k-1}\|_{1,\Omega_2}+\|(\bu_k,\bz_k)\|_{\widetilde{\mV}}\right).
\end{equation}
On the other hand, from \eqref{identvkuk} we obtain
\begin{equation}\label{cotavk1}
\ds\|\bz_k\|_{s,\Omega}\leq \frac{1}{\mu} \left(\|\bz_{k-1}\|_{s,\Omega}+\|\bu_k\|_{s,\Omega}\right)
\end{equation}
and, from \eqref{nablas},
\begin{equation}\label{cotadivuk1}
\|\div\bu_k\|_{1+s,\Omega_i}\leq C(\|\div\bu_{k-1}\|_{1+s,\Omega_i}+\|\bz_k\|_{s,\Omega}),\qquad i=1,2.
\end{equation}
Finally, from \eqref{identvkuk} again,
\begin{equation}\label{cotadiv_vk1}
\ds\|\div\bz_k\|_{1+s,\Omega_i}\leq\frac{1}{\mu}(\|\div\bu_k\|_{1+s,\Omega_i}
+\|\div\bz_{k-1}\|_{1+s,\Omega_i}),\qquad i=1,2.
\end{equation}
Hence, from  inequalities \eqref{cotauk1}--\eqref{cotadiv_vk1}, the inductive assumption, 
and \eqref{bounghyt}, we derive \eqref{udivu1} and \eqref{udivu2} provided $\mu\rho c^2+2\nu\ne 0$ in both $\Omega_1$ and $\Omega_2$.

In case that $\mu\rho c^2+2\nu$ vanishes in $\Omega_i$, $i=1$ or $2$,
arguing as in Remark~\ref{distintodecero} we obtain that
$\bu_1|_{\Omega_i}=\bz_1|_{\Omega_i}=\b0$ and, once again, an
induction argument allow us to conclude that $\bu_k, \bz_k=\b0$ in $\Omega_i$,
$k=1,\ldots,p$. The proof is complete.
\end{proof}

Now, we are in position to establish
property P2.

\begin{lmm}\label{REG_AUT}
Property P2 holds true. Moreover, for any  $(\bu,\bz)\in\mE$, there exists $\widetilde{\bu}_h$,
$\widetilde{\bz}_h\in\mG_h$ such that
\begin{equation*}
\|\bu-\widetilde{\bu}_h\|_{\div,\Omega}+\|\bz-\widetilde{\bz}_h\|_{\div,\Omega}\leq
Ch^s\|(\bu,\bz)\|_{\widetilde{\mV}}.
\end{equation*}
\end{lmm}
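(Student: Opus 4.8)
The plan is to construct the required approximants by Raviart--Thomas interpolation, exactly as in the proof of Lemma~\ref{errores_ui}, now feeding in the regularity of the eigenfunctions furnished by Lemma~\ref{ESTIMACIONES_UV}. Since the announced rate implies property~P2 immediately, I will concentrate on the estimate. Fix $(\bu,\bz)\in\mE$. By Lemma~\ref{ESTIMACIONES_UV} we have $\bu,\bz\in\H^s(\Omega)^d$ with $\div\bu,\div\bz\in\H^{1+s}(\Omega_i)$, $i=1,2$, so $\Pi_h\bu$ and $\Pi_h\bz$ are well defined.

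First I would decompose $\Pi_h\bu=\widetilde{\bu}_h+\breve{\bu}_h$ along the $\mV$-orthogonal splitting $\mV_h=\mG_h\oplus\mK_h$, with $\widetilde{\bu}_h\in\mG_h$, $\breve{\bu}_h\in\mK_h$, and similarly $\Pi_h\bz=\widetilde{\bz}_h+\check{\bz}_h$. Because $\bu\in\mG=\mK^{\bot_{\mV}}$ and $\breve{\bu}_h\in\mK_h\subset\mK$, while $\widetilde{\bu}_h\bot_{\mV}\breve{\bu}_h$ by construction, the error $\bu-\widetilde{\bu}_h$ is $\mV$-orthogonal to $\breve{\bu}_h$, so
$$\|\bu-\widetilde{\bu}_h\|_{\mV}^2\le\|\bu-\widetilde{\bu}_h\|_{\mV}^2+\|\breve{\bu}_h\|_{\mV}^2=\|\bu-\Pi_h\bu\|_{\mV}^2 .$$
It then remains to bound $\|\bu-\Pi_h\bu\|_{\div,\Omega}$. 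The $\L^2$ part follows from \eqref{error_inter_RT} together with \eqref{udivu1}, giving $\|\bu-\Pi_h\bu\|_{0,\Omega}\le Ch^s\|(\bu,\bz)\|_{\widetilde{\mV}}$. For the divergence part, the commuting diagram property \eqref{conmutativo} yields $\div(\bu-\Pi_h\bu)=\div\bu-\P_h(\div\bu)$; applying \eqref{errorL2} on $\Omega_1$ and $\Omega_2$ separately (legitimate since the mesh is compatible with the partition, so $\P_h$ restricts to the analogous projection on each $\mathcal{T}_h(\Omega_i)$) and using $\div\bu|_{\Omega_i}\in\H^{1+s}(\Omega_i)$ gives $\|\div\bu-\P_h(\div\bu)\|_{0,\Omega}\le Ch\,(\|\div\bu\|_{1,\Omega_1}+\|\div\bu\|_{1,\Omega_2})\le Ch^s\|(\bu,\bz)\|_{\widetilde{\mV}}$, where in the last step we used $h\le h^s$ for $h\le1$ and \eqref{udivu1}. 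Hence $\|\bu-\widetilde{\bu}_h\|_{\div,\Omega}\le Ch^s\|(\bu,\bz)\|_{\widetilde{\mV}}$; repeating the argument verbatim for $\bz$, now invoking \eqref{udivu2}, gives $\|\bz-\widetilde{\bz}_h\|_{\div,\Omega}\le Ch^s\|(\bu,\bz)\|_{\widetilde{\mV}}$. Adding the two estimates proves the claim, and since $(\widetilde{\bu}_h,\widetilde{\bz}_h)\in\widetilde{\mG}_h$, property~P2 holds.

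I do not expect a genuine obstacle here: the nontrivial ingredient, namely the piecewise $\H^{1+s}$ regularity of $\div\bu$ and $\div\bz$ together with the $\H^s$ regularity of $\bu$ and $\bz$, has already been established in Lemma~\ref{ESTIMACIONES_UV}. The one point deserving care is that the divergence error must be estimated subdomain by subdomain rather than on all of $\Omega$ at once, since $\div\bu$ need not belong to $\H^1(\Omega)$ globally; this is precisely why the partitions $\{\mathcal{T}_h(\Omega)\}_{h>0}$ were assumed to be compatible with the decomposition $\Omega=(\overline{\Omega}_1\cup\overline{\Omega}_2)^{\circ}$.
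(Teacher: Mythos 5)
Your proposal is correct and follows essentially the same route as the paper: Raviart--Thomas interpolation of $\bu$ and $\bz$, splitting the interpolant along $\mV_h=\mG_h\oplus\mK_h$, the Pythagorean reduction to $\|\bu-\Pi_h\bu\|_{\mV}$, and then the $\L^2$ and divergence estimates via \eqref{error_inter_RT}, \eqref{conmutativo}, \eqref{errorL2} applied subdomainwise, fed by the regularity of Lemma~\ref{ESTIMACIONES_UV}. The paper simply compresses these steps by referring back to the argument that yields \eqref{erroru2} in Lemma~\ref{errores_ui}; you have written out the same argument in full.
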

\begin{proof}
Let $(\bu,\bz)\in\mE$. According to Lemma~\ref{ESTIMACIONES_UV}
$\bu,\bz\in\H^s(\Omega)^d$ and $\div\bu,\div\bz\in\H^{1+s}(\Omega_i)$, $i=1,2$.
Let $\Pi_h\bu\in\mV_h$ be the Raviart-Thomas interpolant of $\bu$. Since
$\mV_h=\mG_h\oplus\mK_h$, we decompose
$\Pi_h\bu=\widetilde{\bu}_h+\breve{\bu}_h$ with $\widetilde{\bu}_h\in\mG_h$
and $\breve{\bu}_h\in\mK_h$. The same arguments from the proof of
Lemma~\ref{errores_ui} that lead to \eqref{erroru2} apply in this case
and combined with Lemma~\ref{ESTIMACIONES_UV} allow us to prove that
$\|\bu-\widetilde{\bu}_h\|_{\div,\Omega} \leq
Ch^s\|(\bu,\bz)\|_{\widetilde{\mV}}$. A similar procedure can be used to
define $\widetilde{\bz}_h$ and to prove that $\|\bz-\widetilde{\bz}_h\|_{\div,\Omega}\leq Ch^s\|(\bu,\bz)\|_{\widetilde{\mV}}$.
\end{proof}
  
We also have the following auxiliary result when the source terms are in $\mE$.
\begin{lmm}\label{CEONVERGENCE_EV}
For $(\bf,\bg)\in \mE$,  let $(\bu,\bz):=\bT(\bf,\bg)$
and $(\bu_h,\bz_h):=\bT_{h}(\bf,\bg)$. Then,
\begin{equation*}
\|\bu-\bu_h\|_{\div,\Omega}+\|\bz-\bz_h\|_{0,\Omega}\leq Ch^s \|(\bf,\bg)\|_{\widetilde{\mV}}.
\end{equation*}
\end{lmm}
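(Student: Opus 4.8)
The plan is to reduce the estimate to the approximation property of the invariant subspace $\mathcal{E}$ obtained in Lemma~\ref{REG_AUT}, exploiting two elementary observations. First, $\bT$ maps $\mathcal{E}$ into itself, so the pair $(\bu,\bz):=\bT(\bf,\bg)$ again belongs to $\mathcal{E}$ (and, via Lemma~\ref{ESTIMACIONES_UV}, carries the regularity $\bu,\bz\in\mG\subset\H^s(\Omega)^d$, $\div\bu\in\H^{1+s}(\Omega_i)$). Second, because $(\bf,\bg)\in\mathcal{E}\subset\mG\times\mG$, the linear functional on the right-hand side of the problem defining $\bu$ annihilates $\mK$, which restores exact Galerkin orthogonality for the $\bu$-component; this is what distinguishes the present situation from Lemma~\ref{P1}, where the source lies only in $\widetilde{\mG}_h$ and a genuine consistency term survives. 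I would then estimate the two components separately.

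For the $\bz$-component, \eqref{z=f} and \eqref{zh_proyeccion} give $\bz=\bf$ and $\bz_h=\mathcal{P}_{\mG_h}\bf$, where $\mathcal{P}_{\mG_h}$ is the $\mH$-orthogonal projection onto $\mG_h$. Applying Lemma~\ref{REG_AUT} to $(\bf,\bg)\in\mathcal{E}$ produces $\widetilde{\bf}_h\in\mG_h$ with $\|\bf-\widetilde{\bf}_h\|_{\div,\Omega}\le Ch^s\|(\bf,\bg)\|_{\widetilde{\mV}}$. Since the $\mH$-norm is equivalent to $\|\cdot\|_{0,\Omega}$ and $\mathcal{P}_{\mG_h}\bf$ is the $\mH$-best approximation of $\bf$ in $\mG_h$, one gets $\|\bz-\bz_h\|_{0,\Omega}=\|\bf-\mathcal{P}_{\mG_h}\bf\|_{0,\Omega}\le C\|\bf-\widetilde{\bf}_h\|_{\mH}\le C\|\bf-\widetilde{\bf}_h\|_{\div,\Omega}\le Ch^s\|(\bf,\bg)\|_{\widetilde{\mV}}$.

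For the $\bu$-component I would argue by C\'ea's lemma. By \eqref{sol_ortogonal}, $\bu\in\mG$ satisfies $a(\bu,\bv)=-2\int_\Omega\nu\div\bf\div\overline{\bv}-\int_\Omega\rho\bg\cdot\overline{\bv}$ for all $\bv\in\mG$, and this identity in fact holds for every $\bv\in\mV$: for $\bv\in\mK$ one has $\div\bv=0$, so $a(\bu,\bv)$ and the viscous term both vanish, while $\int_\Omega\rho\bg\cdot\overline{\bv}=(\bg,\bv)_{\mH}=0$ because $\bg\in\mG$ is $\mH$-orthogonal to $\mK$. Subtracting the discrete equation \eqref{sol_discreta}, which $\bu_h\in\mG_h\subset\mV$ satisfies with exactly the same right-hand side, yields $a(\bu-\bu_h,\bv_h)=0$ for all $\bv_h\in\mG_h$; together with the continuity of $a(\cdot,\cdot)$ and its $\mG_h$-ellipticity uniform in $h$, the standard C\'ea argument gives $\|\bu-\bu_h\|_{\div,\Omega}\le C\inf_{\bv_h\in\mG_h}\|\bu-\bv_h\|_{\div,\Omega}$. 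Finally, since $(\bu,\bz)=\bT(\bf,\bg)\in\mathcal{E}$, Lemma~\ref{REG_AUT} furnishes $\widetilde{\bu}_h\in\mG_h$ with $\|\bu-\widetilde{\bu}_h\|_{\div,\Omega}\le Ch^s\|(\bu,\bz)\|_{\widetilde{\mV}}$, and the boundedness of $\bT$ bounds the last factor by $C\|(\bf,\bg)\|_{\widetilde{\mV}}$. Adding the two contributions gives the claim.

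I do not expect a serious obstacle: once the invariance $\bT\mathcal{E}\subset\mathcal{E}$ is invoked, the result is essentially a corollary of Lemma~\ref{REG_AUT}. The only step that requires a little care is the extension of the variational identity for $\bu$ from $\mG$ to all of $\mV$ — that is, checking that the right-hand side functional vanishes on $\mK$ — which is exactly the place where the hypothesis $(\bf,\bg)\in\mathcal{E}$ (hence $\bg\in\mG$, not merely $\bg\in\mH$) is used.
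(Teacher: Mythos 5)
Your proof is correct and follows essentially the same route as the paper: Lemma~\ref{REG_AUT} (applied to $(\bu,\bz)\in\mE$, using the invariance $\bT\mE\subset\mE$) supplies the approximation term, the best-approximation property of $\P_{\mG_h}$ handles $\bz$, and the decisive point in both arguments is that $\bg\in\mG$ is $\mH$-orthogonal to $\mK$ so that the consistency term vanishes. Your only (harmless) streamlining is to extend the exact variational identity from $\mG$ to all of $\mV$ and invoke C\'ea's lemma directly via full Galerkin orthogonality, whereas the paper stays within the second Strang lemma and verifies $a(\bu-\bu_h,\bv_h)=\int_\Omega\rho\bg\cdot\overline{\chi}=0$ through the discrete Helmholtz decomposition of $\bv_h$.
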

\begin{proof}
Since  $\mG_h\nsubseteq\mG$, we resort once more to the second Strang
Lemma, which applied now to \eqref{sol_ortogonal} and \eqref{sol_discreta} leads to
\begin{equation*}\label{stranginE}
\ds\|\bu-\bu_{h}\|_{\div,\Omega}\leq C\left[\inf_{\bv_{h}\in\mG_h}\|\bu-\bv_{h}\|_{\div,\Omega}
+\sup_{\b0\neq\bv_h\in\mG_h}\frac{a(\bu-\bu_{h},\bv_h)}{\|\bv_h\|_{\div,\Omega}}\right]. 
\end{equation*}
{}From Lemma~\ref{REG_AUT} we know that there exists $\widetilde{\bu}_h\in\mG_h$ such that
\begin{equation*}\label{u-uh}
\|\bu-\widetilde{\bu}_h\|_{\div,\Omega}\leq Ch^s\|(\bu,\bz)\|_{\widetilde{\mV}}\leq Ch^s\|(\bf,\bg)\|_{\widetilde{\mV}}.
\end{equation*}
Moreover, the consistency term above vanishes.
In fact, consider $\bv_h\in\mG_h$ and the decomposition
$\bv_h=\frac{1}{\rho}\nabla\xi+\chi$ as in Lemma \ref{helmoltzdisc}.
Using the same arguments as in the proof of Lemma~\ref{errores_ui},
we prove that 
\begin{equation*}
\ds a(\bu-\bu_h,\bv_h)=\int_{\Omega}\rho\bg\cdot\overline{\chi}=0,
\end{equation*}
where the last equality holds because $\bg\in\mG$ and $\chi\in\mK$.
 
On the other hand, we know from \eqref{z=f} and \eqref{zh_proyeccion}
that $\bz=\bf$  and   $\bz_h=\P_{\mG_h}\bf$, respectively.
Then, since $\P_{\mG_h}$ is the $\mH$-orthogonal projection onto $\mG_h$,
we have that $\|\bz-\bz_h\|_{\mH}\leq\|\bz-\widetilde{\bz}_h\|_{\mH}$,
with $\widetilde{\bz}_h\in\mG_h$ as in Lemma~\ref{REG_AUT}.
Hence, we obtain
$$\|\bz-\bz_h\|_{0,\Omega}\leq Ch^s\|(\bu,\bz)\|_{\widetilde{\mV}}\le
Ch^s\|(\bf,\bg)\|_{\widetilde{\mV}}.$$
The proof is complete.
\end{proof}

The above lemmas are the ingredients to prove spectral convergence and to obtain error estimates. Our first result is the following theorem which has been proved in \cite{DNR1} as a consequence of property P1 (cf. Lemma~\ref{P1}) and which shows that
the proposed method is free of spurious modes.
\begin{thrm}
Let $K\subset\mathbb{C}$ be a compact set such that $K\cap\sp(\bT)=\emptyset$. Then,
there exists $h_0>0$ such that, for all $h\leq h_0$,  $K\cap\sp(\bT_{h})=\emptyset.$ 
\end{thrm}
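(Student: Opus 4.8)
The plan is to argue by contradiction, using as the sole analytic input the norm convergence of Property~P1 (Lemma~\ref{P1}). Suppose the statement were false. Then there would exist a sequence $h_n\to 0$ and scalars $\mu_n\in K\cap\sp(\bT_{h_n})$. Since $0\in\sp(\bT)$ (cf. Theorem~\ref{CARACTERIZACION}) while $K\cap\sp(\bT)=\emptyset$, necessarily $0\notin K$, so each $\mu_n\neq 0$. Because $\bT_{h_n}(\widetilde{\mV})\subset\widetilde{\mG}_{h_n}$ we have $\sp(\bT_{h_n})=\sp(\bT_{h_n}|_{\widetilde{\mG}_{h_n}})\cup\{0\}$, and $\widetilde{\mG}_{h_n}=\mG_{h_n}\times\mG_{h_n}$ is finite dimensional; hence $\mu_n$ is a genuine eigenvalue of $\bT_{h_n}|_{\widetilde{\mG}_{h_n}}$ with an eigenfunction $(\bu_n,\bz_n)=\mu_n^{-1}\bT_{h_n}(\bu_n,\bz_n)\in\widetilde{\mG}_{h_n}$, which I normalize so that $\|(\bu_n,\bz_n)\|_{\widetilde{\mV}}=1$. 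By compactness of $K$, after passing to a subsequence I may assume $\mu_n\to\mu\in K$.

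Next I would pass to the limit in the eigenvalue relation. Since $(\bu_n,\bz_n)\in\widetilde{\mG}_{h_n}$, the very definition of $\|\cdot\|_{h_n}$ gives
\[
\|(\mu_n\bId-\bT)(\bu_n,\bz_n)\|_{\widetilde{\mV}}=\|(\bT_{h_n}-\bT)(\bu_n,\bz_n)\|_{\widetilde{\mV}}\le\|\bT-\bT_{h_n}\|_{h_n}\,\|(\bu_n,\bz_n)\|_{\widetilde{\mV}}=\|\bT-\bT_{h_n}\|_{h_n}\xrightarrow[n\to\infty]{}0,
\]
by Lemma~\ref{P1}. Since $\mu_n\to\mu$ and $\bT$ is bounded on $\widetilde{\mV}$, it follows that $(\mu\bId-\bT)(\bu_n,\bz_n)\to\b0$ in $\widetilde{\mV}$.

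Finally I would close the argument. If $\mu\notin\sp(\bT)$, then $\mu\bId-\bT$ is boundedly invertible on $\widetilde{\mV}$, and therefore
\[
(\bu_n,\bz_n)=(\mu\bId-\bT)^{-1}\big[(\mu\bId-\bT)(\bu_n,\bz_n)\big]\xrightarrow[n\to\infty]{}\b0,
\]
which contradicts $\|(\bu_n,\bz_n)\|_{\widetilde{\mV}}=1$. Hence $\mu\in\sp(\bT)\cap K$, contradicting the hypothesis $K\cap\sp(\bT)=\emptyset$. This establishes the existence of the desired $h_0$ (obtained by negating the conclusion), which is precisely the assertion of the theorem.

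The only genuinely delicate point is that Property~P1 furnishes convergence of $\bT_h$ towards $\bT$ only over the nonconforming discrete spaces $\widetilde{\mG}_h$ and only in the gap-type norm $\|\cdot\|_h$, not on all of $\widetilde{\mV}$; the argument above is arranged so that this restricted information suffices, which works because every nonzero discrete eigenvalue has its eigenspace inside $\widetilde{\mG}_h$. Everything else is the standard Descloux--Nassif--Rappaz spectral-convergence machinery of \cite{DNR1}, so this result is essentially a direct invocation of that theory once Lemma~\ref{P1} is in hand.
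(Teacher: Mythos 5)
Your proof is correct. The paper does not write out an argument at all---it simply states that the theorem "has been proved in \cite{DNR1} as a consequence of property P1"---and your contradiction/subsequence argument is a faithful, self-contained reconstruction of that proof: it uses exactly the inputs the paper points to (Lemma~\ref{P1}, the fact that every nonzero eigenvalue of $\bT_h$ has its eigenvector in $\widetilde{\mG}_h$ so that the $\|\cdot\|_h$ bound applies, and bounded invertibility of $\mu\bId-\bT$ off $\sp(\bT)$), the only cosmetic difference being that \cite{DNR1} phrases the same estimate quantitatively, as a uniform lower bound for $\|(z\bId-\bT_h)x_h\|$ over $z\in K$ via $\sup_{z\in K}\|(z\bId-\bT)^{-1}\|<\infty$, rather than by extracting a convergent subsequence.
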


Let $D\subset\mathbb{C}$ be a closed disk centered at $\mu$, such
that $D\cap\sp(\bT)=\{\mu\}$. Let $\mu_{1h},\ldots,\mu_{m(h)h}$
be the eigenvalues of $\bT_h$ contained in $D$
(repeated according to their algebraic multiplicities).
Under assumptions P1 and P2, it is proved in \cite{DNR1}
that $m(h)=m$ for $h$ small enough and that $\lim_{h\rightarrow 0}\mu_{kh}=\mu$
for $k=1,\ldots, m$. 

On the other hand the arguments used in Section 5 of \cite{BDRS}
can be readily adapted to our problem, to obtain error estimates. We recall the definition of the gap between two closed subspaces
$\mathcal{W}$ and $\mathcal{Y}$ of $\widetilde{\mV}$:
\begin{equation*}
 \widehat{\delta}(\mathcal{W},\mathcal{Y}):=\max\{\delta(\mathcal{W},\mathcal{Y}),
 \delta(\mathcal{Y},\mathcal{W})\},
\end{equation*}
with
\begin{equation*}
 \ds\delta(\mathcal{W},\mathcal{Y}):=\sup_{\stackrel{\scriptstyle (\bphi,\bpsi)\in\mathcal{W}}{\|(\bphi,\bpsi)\|_{\widetilde{\mV}}=1}}\left[\inf_{(\widehat{\bphi},\widehat{\bpsi})\in\mathcal{Y}}\|(\bphi,\bpsi)-(\widehat{\bphi},\widehat{\bpsi})\|_{\widetilde{\mV}}\right].
\end{equation*}

Let $\mathcal{E}_h$ be the invariant subspace of $\bT_h$ relative
to the eigenvalues $\mu_{1h},\ldots,\mu_{mh}$ converging to $\mu$.
{}From Lemmas \ref{P1}--\ref{CEONVERGENCE_EV}, we derive the following results for which we do not include proofs
to avoid repeating step by step those of \cite[Section~5]{BDRS}.
\begin{thrm}\label{gap-eigenspaces}
There exist constants $h_0>0$ and $C>0$ such that, for all $h\leq h_0$,   
\begin{equation*}
\widehat{\delta}\left(\mathcal{E}_h,\mathcal{E}\right)\leq Ch^s.
\end{equation*}
\end{thrm}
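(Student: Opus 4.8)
The plan is to invoke the abstract spectral approximation theory for non-compact operators, exactly in the form used in \cite[Section~5]{BDRS}, and verify that its hypotheses are met by the pair $(\bT,\bT_h)$ restricted to the spaces $\widetilde{\mG}$ and $\widetilde{\mG}_h$. The two structural hypotheses of that theory are precisely properties P1 and P2, which have already been established: P1 in Lemma~\ref{P1} (with the quantitative bound $\|\bT-\bT_h\|_h\le Ch^s$) and P2 in Lemma~\ref{REG_AUT}. These two facts alone guarantee, via the results of \cite{DNR1,DNR2}, that the eigenvalue $\mu$ is approximated by exactly $m$ discrete eigenvalues and that $\widehat\delta(\mE_h,\mE)\to 0$; the point of the theorem is to upgrade this qualitative convergence to the rate $O(h^s)$.

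First I would recall the standard gap estimate from the theory: for $h$ small enough one has
\begin{equation*}
\widehat\delta(\mE_h,\mE)\le C\left(\delta(\mE,\widetilde{\mG}_h)+\sup_{\stackrel{\scriptstyle(\bu,\bz)\in\mE}{\|(\bu,\bz)\|_{\widetilde{\mV}}=1}}\|(\bT-\bT_h)(\bu,\bz)\|_{\widetilde{\mV}}\right),
\end{equation*}
where the first term measures how well the exact invariant subspace $\mE$ is approximated by the discrete space $\widetilde{\mG}_h$, and the second term is the consistency of $\bT_h$ as an approximation of $\bT$ \emph{on the invariant subspace itself} (not the full operator norm $\|\bT-\bT_h\|_h$, which would only give information about eigenvalues, not eigenspaces). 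Then I would bound the two terms separately. For the first term, Lemma~\ref{REG_AUT} gives, for every $(\bu,\bz)\in\mE$, elements $\widetilde{\bu}_h,\widetilde{\bz}_h\in\mG_h$ with $\|\bu-\widetilde{\bu}_h\|_{\div,\Omega}+\|\bz-\widetilde{\bz}_h\|_{\div,\Omega}\le Ch^s\|(\bu,\bz)\|_{\widetilde{\mV}}$, and since the $\widetilde{\mV}$-norm on $\widetilde{\mG}_h=\mG_h\times\mG_h$ is controlled by the product $\div$-norm, this yields $\delta(\mE,\widetilde{\mG}_h)\le Ch^s$. For the second term, Lemma~\ref{CEONVERGENCE_EV} gives exactly $\|(\bT-\bT_h)(\bf,\bg)\|_{\widetilde{\mV}}\le Ch^s\|(\bf,\bg)\|_{\widetilde{\mV}}$ for source data $(\bf,\bg)\in\mE$. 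Combining these two bounds in the displayed estimate gives $\widehat\delta(\mE_h,\mE)\le Ch^s$, which is the assertion.

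The main obstacle, and the reason the proof is deferred to \cite[Section~5]{BDRS} rather than written out, is purely bookkeeping: one must check that the abstract framework of \cite{DNR1,DNR2,BDRS} applies \emph{verbatim} here, i.e.\ that $\bT$ and $\bT_h$ genuinely map into $\widetilde{\mG}$ and $\widetilde{\mG}_h$ respectively, that the spurious-mode-free property (already stated as the theorem preceding the definition of $D$) supplies the separation of $\mu$ from the rest of $\sp(\bT_h)$ uniformly in $h$, and that the decomposition $\widetilde{\mV}=\widetilde{\mG}\oplus\widetilde{\mK}$ (resp.\ its discrete analogue $\mV_h\times\mV_h=\widetilde{\mG}_h\oplus(\mK_h\times\mK_h)$) is compatible with the invariant-subspace machinery. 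Since the operators, the forms, and the Helmholtz-type decompositions here are structurally identical to those in \cite{BDRS} — the only genuine novelty being the two-fluid transmission setting, which has already been absorbed into Lemmas~\ref{u1u2}, \ref{errores_ui}, \ref{ESTIMACIONES_UV} — the remaining arguments are a line-by-line transcription, and I would simply cite \cite[Section~5]{BDRS} for the details once P1, P2, and the refined estimates of Lemmas~\ref{REG_AUT} and \ref{CEONVERGENCE_EV} are in hand.
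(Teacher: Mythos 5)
Your proposal is correct and follows essentially the same route as the paper, which deliberately omits the proof because it is a step-by-step repetition of \cite[Section~5]{BDRS}: the gap estimate is reduced, via the spectral-projector machinery under P1 and P2, to the approximation of $\mathcal{E}$ by $\widetilde{\mG}_h$ (Lemma~\ref{REG_AUT}) and the consistency of $\bT_h$ on $\mathcal{E}$ (Lemma~\ref{CEONVERGENCE_EV}), each of order $O(h^s)$. Your write-up simply makes explicit the intermediate inequality that \cite{BDRS} supplies, so no further comment is needed.
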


\begin{thrm}\label{Eigen_Aprox}
There exist constants $h_0>0$ and $C>0$ such that, for all $h\leq h_0$,
\begin{equation*}
\ds\left|\mu-\frac{1}{m}\sum_{k=1}^m\mu_{kh}\right|\leq Ch^{2s},
\end{equation*}
\begin{equation*}
\ds\left|\frac{1}{\mu}-\frac{1}{m}\sum_{k=1}^m\frac{1}{\mu_{kh}}\right|\leq Ch^{2s},
\end{equation*}
\begin{equation*}
\ds\max_{k=1,\ldots, m}|\mu-\mu_{kh}|\leq Ch^{2s/p},
\end{equation*}
where $p$ is the ascent of the eigenvalue $\mu$ of $\bT$.
\end{thrm}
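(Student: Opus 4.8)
The plan is to reproduce, with the obvious adjustments, the spectral error analysis of \cite[Section~5]{BDRS} --- that is, the Babu\v{s}ka--Osborn theory for nonselfadjoint operators combined with the noncompact framework of \cite{DNR1,DNR2}. Let $D$ be the disk of Theorem~\ref{gap-eigenspaces}, let $\mathcal{E}$ be the invariant subspace of $\bT$ associated with $\mu$ and $\mathcal{E}_h$ that of $\bT_h$ associated with $\mu_{1h},\dots,\mu_{mh}$, and let $E$, $E_h$ be the corresponding spectral projectors (Dunford--Taylor integrals of the resolvents along $\partial D$). First I would record that, by Property~P1 (Lemma~\ref{P1}) and $\partial D\cap\sp(\bT)=\emptyset$, the discrete resolvent is uniformly bounded on $\partial D$ for $h$ small, so that $m(h)=m$ and, using the resolvent identity together with Lemma~\ref{CEONVERGENCE_EV}, $\|(E-E_h)|_{\mathcal{E}}\|_{\widetilde{\mV}}\le Ch^{s}$. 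I would also bring in the adjoint operator $\bT^{*}$ (with respect to the $\widetilde{\mV}$ inner product), its invariant subspace $\mathcal{E}^{*}$ associated with $\overline{\mu}$, and observe that, by arguments entirely analogous to those of Lemmas~\ref{ESTIMACIONES_UV} and \ref{REG_AUT}, the elements of $\mathcal{E}^{*}$ enjoy the same additional regularity and are approximable from $\widetilde{\mG}_h$ with order $h^{s}$; hence $\|(\bT^{*}-\bT_h^{*})|_{\mathcal{E}^{*}}\|_{\widetilde{\mV}}\le Ch^{s}$.

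For the two averaged estimates I would invoke the classical identity for the arithmetic mean $\widehat{\mu}_h:=\tfrac1m\sum_{k=1}^m\mu_{kh}=\tfrac1m\operatorname{tr}(\bT_h|_{\mathcal{E}_h})$: choosing dual bases $\{(\bphi_j,\bpsi_j)\}_{j=1}^m$ of $\mathcal{E}$ and $\{(\bphi_j^{*},\bpsi_j^{*})\}_{j=1}^m$ of $\mathcal{E}^{*}$ one has
\begin{equation*}
\mu-\widehat{\mu}_h=\frac1m\sum_{j=1}^m\big\langle(\bT-\bT_h)(\bphi_j,\bpsi_j),(\bphi_j^{*},\bpsi_j^{*})\big\rangle_{\widetilde{\mV}}+\mathcal{R}_h,\qquad |\mathcal{R}_h|\le C\,\|(\bT-\bT_h)|_{\mathcal{E}}\|\,\|(\bT^{*}-\bT_h^{*})|_{\mathcal{E}^{*}}\|,
\end{equation*}
so that $|\mathcal{R}_h|\le Ch^{2s}$ by Lemma~\ref{CEONVERGENCE_EV} and its adjoint counterpart. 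Each term of the leading sum is then shown to be $O(h^{2s})$ by the duality argument of \cite[Section~5]{BDRS}: one inserts the $\widetilde{\mG}_h$ quasi-interpolant $(\bphi_{j,h}^{*},\bpsi_{j,h}^{*})$ of $(\bphi_j^{*},\bpsi_j^{*})$ (available with order $h^{s}$ by the analogue of Lemma~\ref{REG_AUT}); the part paired against the conforming function is controlled by the consistency/Galerkin relations behind the definitions of $\bT$ and $\bT_h$ (as in the proofs of Lemmas~\ref{errores_ui} and \ref{CEONVERGENCE_EV}) and is of order $h^{2s}$, while the part paired against $(\bphi_j^{*},\bpsi_j^{*})-(\bphi_{j,h}^{*},\bpsi_{j,h}^{*})$ is bounded by $\|(\bT-\bT_h)(\bphi_j,\bpsi_j)\|_{\widetilde{\mV}}\,\|(\bphi_j^{*},\bpsi_j^{*})-(\bphi_{j,h}^{*},\bpsi_{j,h}^{*})\|_{\widetilde{\mV}}\le Ch^{s}\cdot Ch^{s}$ by Lemmas~\ref{CEONVERGENCE_EV} and \ref{REG_AUT}. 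This gives $|\mu-\widehat{\mu}_h|\le Ch^{2s}$. The bound on $\big|\tfrac1\mu-\tfrac1m\sum_{k}\tfrac1{\mu_{kh}}\big|$ follows from exactly the same computation applied to $\tfrac1m\sum_k\tfrac1{\mu_{kh}}=\tfrac1m\operatorname{tr}\big((\bT_h|_{\mathcal{E}_h})^{-1}\big)$, i.e.\ to the eigenvalues $\lambda=1/\mu$ and $\lambda_h=1/\mu_{kh}$ of the pencils of Problems~\ref{PROBLEM2} and \ref{PROBLEM4}, for which the roles of $\widetilde a$ and $\widetilde b$ are interchanged while the same regularity, approximation and consistency estimates remain available.

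Finally, for the individual eigenvalues I would argue as Babu\v{s}ka--Osborn do in the defective case. Since the ascent of $\mu$ for $\bT$ is $p$, we have $(\bT-\mu\bId)^{p}|_{\mathcal{E}}=\b0$; transporting this identity to $\mathcal{E}_h$ through $E$, $E_h$ and the bounds above yields $\big\|(\bT_h|_{\mathcal{E}_h}-\mu\bId)^{p}\big\|\le C\,\eta_h$, where $\eta_h$ is precisely the $O(h^{2s})$ quantity (leading duality term plus product remainder) that controlled the averaged error. As each $(\mu_{kh}-\mu)^{p}$ is an eigenvalue of the finite-dimensional operator $(\bT_h|_{\mathcal{E}_h}-\mu\bId)^{p}$, we obtain $|\mu_{kh}-\mu|^{p}\le C\eta_h\le Ch^{2s}$, i.e.\ $\max_{k}|\mu-\mu_{kh}|\le Ch^{2s/p}$. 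I expect the main obstacle to be securing the \emph{double} order $h^{2s}$ instead of the plain order $h^{s}$: this hinges on (i) establishing the additional regularity and the $\widetilde{\mG}_h$-approximability of the adjoint invariant subspace $\mathcal{E}^{*}$ (the analogues of Lemmas~\ref{ESTIMACIONES_UV} and \ref{REG_AUT}) and (ii) verifying that the Galerkin/consistency orthogonality survives the nonconformity $\mG_h\nsubseteq\mG$ tightly enough --- via Strang-type arguments as in Lemma~\ref{errores_ui} --- so that the $O(h^{s})$ primal and dual rates genuinely multiply; everything else is a routine transcription of \cite[Section~5]{BDRS}.
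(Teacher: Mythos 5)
Your proposal is correct and follows essentially the same route as the paper, which in fact omits the proof entirely and states that it repeats step by step the arguments of \cite[Section~5]{BDRS} using Lemmas~\ref{P1}--\ref{CEONVERGENCE_EV}; your outline (spectral projectors, the adjoint invariant subspace with its analogous regularity and approximation properties, the duality identity for the arithmetic mean, and the ascent argument for the individual eigenvalues) is precisely the content of that reference. The only point worth stressing is the one you already flag: the double order hinges on the consistency term vanishing exactly for source terms in $\mE$ (as shown in Lemma~\ref{CEONVERGENCE_EV}) and on establishing the mirror statements of Lemmas~\ref{ESTIMACIONES_UV} and \ref{REG_AUT} for the adjoint eigenfunctions, both of which go through as you describe.
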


\setcounter{equation}{0}
\section{Numerical Results}
\label{NUMERICOS}
We report in this section the results of some numerical tests, in order to assess the
performance of the proposed method. With this end,
first we introduce a convenient matrix form of
the discrete problem which allows us to use standard eigensolvers. As a by-product, this matrix form also allows us to prove that Problems \ref{problem3} and \ref{PROBLEM4} are well posed.

Let $\{\phi_j\}_{j=1}^N$
be a nodal basis of $\mV_h$. We define the matrices
$\BK_1:=(\BK^{(1)}_{ij})$, $\BK_2:=(\BK^{(2)}_{ij})$ and $\BM:=(\BM_{ij})$ as follows:
\begin{equation*}
\ds\BK^{(1)}_{ij}:=2\int_{\Omega}\nu\div\phi_i\div\phi_j,\ \ 
\BK^{(2)}_{ij}:=\int_{\Omega}\rho c^2\div\phi_i\div\phi_j,\ \ 
\hbox{and}\ \ \BM_{ij}:=\int_{\Omega}\rho \phi_i\cdot\phi_j.
\end{equation*}

The matrix form of Problem \ref{problem3} reads 
\begin{equation}\label{matrixform1}
(\lambda^2_h\BM+\lambda_h\BK_1+\BK_2)\vec{\bu}_h=\b0,
\end{equation}
where we denote by $\vec{\bu}_h$ the vector
of components of $\bu_h$ in the nodal basis of $\mV_h$.

Analogously, the matrix form of Problem~\ref{PROBLEM4} reads
$$
\begin{pmatrix}
\BK_2&\b0\\
\b0&\BM
\end{pmatrix}
\begin{pmatrix}
\vec{\bu}_h\\
\vec{\bz}_h
\end{pmatrix}
=\lambda_h\begin{pmatrix}
-\BK_1&-\BM\\
\BM&\b0
\end{pmatrix}
\begin{pmatrix}
\vec{\bu}_h\\
\vec{\bz}_h
\end{pmatrix},
$$
with $\vec{\bz}_h$ being the vector of components of $\bz_h$. However, this problems is not suitable to be solved with standard eigensolvers, since neither the right-hand side nor the left-hand side matrix are Hermitian and positive definite. 

Alternatively, for $\lambda_h\neq 0$, let $\mu_h:=\frac{1}{\lambda_h}$. Then,
problem \eqref{matrixform1} is equivalent to
\begin{equation*}\label{matrixform2}
(\BM+2\mu_h\BK_1+\mu_h^2\BK_2)\vec{\bu}_h=\b0.
\end{equation*}
Introducing $\vec{\bw}_h:=\mu_h\vec{\bu}_h$, the problem above is equivalent to

\begin{equation*}
\begin{pmatrix}
\BM&\b0\\
\b0&\BM
\end{pmatrix}
\begin{pmatrix}
\vec{\bu}_h\\
\vec{\bw}_h
\end{pmatrix}
=\mu_h
\begin{pmatrix}
-\BK_1&-\BK_2\\
\BM&\b0
\end{pmatrix}
\begin{pmatrix}
\vec{\bu}_h\\
\vec{\bw}_h
\end{pmatrix},
\end{equation*}
which in turn is equivalent to
\begin{equation*}\label{matrixproblem}
\begin{pmatrix}
-\BK_1&-\BK_2\\
\BM&\b0
\end{pmatrix}
\begin{pmatrix}
\vec{\bu}_h\\
\vec{\bw}_h
\end{pmatrix}
=\lambda_h
\begin{pmatrix}
\BM&\b0\\
\b0&\BM
\end{pmatrix}
\begin{pmatrix}
\vec{\bu}_h\\
\vec{\bw}_h
\end{pmatrix}.
\end{equation*}
Thus, the last problem is equivalent to Problem \ref{problem3}
except for $\lambda_h=0$ and the matrix in its right-hand side
is Hermitian and positive definite. Hence, it is well posed and
can be safely solved by standard eigensolvers.

We implemented the proposed method in a MATLAB code.
We applied it to a 2D rectangular rigid cavity filled
with two fluids with different physical parameters
as shown in Figure~\ref{FIG:cavity}.
The domain occupied by the fluids are $\Omega_1:=(0,A)\times (0,H)$
and $\Omega_2:=(0,A)\times (H,B)$. For such a simple geometry,
it is possible to calculate an analytical solution
which will be used to validate our method.

\begin{figure}[H]
\begin{center}
\includegraphics[height=5.7cm, width=4cm, angle=0]{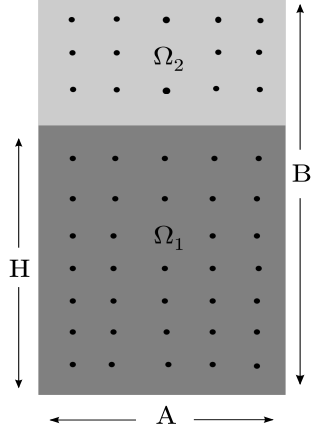}
\caption{Two fluids in a rectangular rigid cavity.}
\label{FIG:cavity}
\end{center}
\end{figure} 

Let $\bu\in\H_0(\div,\Omega)$ be a solution of Problem~\ref{PROBLEM1}.
Testing it with $\bv\in\mathcal{D}(\Omega)^2$ we have
$\nabla((2\lambda\nu+\rho c^2)\div\bu)=-\lambda^2\rho\bu\in\L^2(\Omega)^2$.
Then, $\hp:=-(2\nu\lambda+\rho c^2)\div\bu\in\H^1(\Omega)$.
Hence, $\hp_1|_{\Gamma}=\hp_2|_{\Gamma}$. Moreover,
$\ds\bu=-\frac{1}{\lambda^2\rho}\nabla\hp$, which implies that
$\ds\frac{1}{\rho_1}\frac{\partial\hp_1}{\partial\nu}
=\frac{1}{\rho_2}\frac{\partial\hp_2}{\partial\nu}$
on $\Gamma$. 
Then, we write problem~\eqref{CONST1}--\eqref{boundary2},
in terms of $\hp_i$ as follows:
\begin{align*}
\Delta\hp_i&=\frac{\lambda^2\rho_i}{\rho_i c_i^2+2\nu_i\lambda}\hp_i\hspace{0.6cm}\mbox{in}\ \Omega_i,\hspace{0.2cm}i=1,2,\\
\frac{\partial\hp_i}{\partial\bn_i}&=0\hspace{2.5cm}\mbox{on}\ \Gamma_i,\hspace{0.2cm}i=1,2,\\
\hp_1&=\hp_2\hspace{2.35cm}\mbox{on}\ \Gamma,\\
\frac{1}{\rho_1}\frac{\partial\hp_1}{\partial\bn}&=\frac{1}{\rho_2}\frac{\partial\hp_2}{\partial\bn}\hspace{1.65cm}\mbox{on}\ \Gamma.
\end{align*}

We proceed by separation of variables. Assuming that $\hp_i(x,y)=X_i(x)Y_i(y)$, we are left with the following problem:
\begin{align}
\ds \frac{X_i''(x)}{X_i(x)}+\frac{Y_i''(y)}{Y_i(y)}&= \frac{\lambda^2\rho_i}{\rho_i c_i^2+2\nu_i\lambda}\quad\mbox{in}\ \Omega_i,\label{Xi}\\
X_i'(0)=X_i'(A)&=0,\hspace{1.1cm} \qquad i=1,2,\label{contxi}\\
Y_1'(0)=Y_2'(B)&=0,\label{contyi}\\
\frac{1}{\rho_1}X_1(x)Y_1'(H)&=\frac{1}{\rho_2}X_2(x)Y_2'(H),\hspace{0.2cm}0<x<A,\label{contrhoi}\\
X_1(x)Y_1(H)&=X_2(x)Y_2(H),\quad 0<x<A\label{contidevi}.
\end{align}
{}From \eqref{Xi} we have that $X_i(x)''/X_i(x)$ and $Y_i(y)''/Y_i(y)$ are constant.
Moreover, from \eqref{contrhoi} and \eqref{contidevi},
it is easy to check that $Y_i(H)$ and $Y'_i(H)$ cannot vanish simultaneously
and  $X_1(x)=X_2(x)$ (actually, it is  derived that $X_1(x)=CX_2(x)$,
but the constant $C$ can be chosen equal to one without loss of generality).

{}From the fact that $X_i(x)''/X_i(x)$ is constant and \eqref{contxi}, we have that
\begin{equation*}
X_1(x)=X_2(x)=\cos\left(\frac{m\pi x}{A}\right),\qquad m=0,1,2,\ldots.
\end{equation*}
On the other hand, from the fact that $Y_i(y)''/Y_i(y)$ is also constant and 
\eqref{contyi} we derive 
\begin{equation}\label{Y1Y2}
Y_1(y)=C_1\cosh(r_m^{(1)}(\lambda)y)\qquad\text{and}\qquad Y_2(y)=C_2\cosh(r_m^{(2)}(\lambda)(y-B)),
\end{equation}
where $C_1$ and $C_2$ are constants and 
\begin{equation*}
r_m^{(i)}:=\sqrt{\frac{\lambda^2\rho_i}{\rho_i c_i^2+2\nu_i\lambda}+\frac{m^2\pi^2}{A^2}}, \qquad m=0,1,2,\ldots, \qquad i=1,2.
\end{equation*}

 Since $Y_i(H)$ and $Y_i'(H)$ cannot vanish simultaneously, \eqref{contrhoi}
 and  \eqref{contidevi} lead to 
\begin{equation*}\label{INT}
\frac{1}{\rho_1}Y_1'(H)=\frac{1}{\rho_{2}}Y_2'(H)\qquad\mbox{and}\qquad Y_1(H)=Y_2(H),
\end{equation*}
respectively. Thus, substituting \eqref{Y1Y2} into these equation yields
the following linear system  for the coefficients $C_1$ and $C_2$:
\begin{align*}
\ds C_1\cosh(r_{m}^{(1)}(\lambda)H)&=C_2\cosh(r_{m}^{(2)}(\lambda)(H-B)),\\
\ds\frac{C_1r_{m}^{(1)}(\lambda)}{\rho_1}\sinh(r_{m}^{(1)}(\lambda)H)&=\frac{C_2r_{m}^{(2)}(\lambda)}{\rho_2}\sinh(r_{m}^{(2)}(\lambda)(H-B)).
\end{align*}
For this system to have non trivial solutions, its determinant must vanish,
which yields the following non linear equation in $\lambda$ for $m=0,1,2,\ldots$,
whose roots are the eigenvalues of Problem~\ref{PROBLEM1}:
\begin{multline*}
f_m(\lambda):=\frac{r_{m}^{(1)}(\lambda)}{\rho_1}\sinh(r_{m}^{(1)}(\lambda)H)\cosh(r_{m}^{(2)}(\lambda)(H-B))\\
-\frac{r_{m}^{(2)}(\lambda)}{\rho_2}\sinh(r_{m}^{(2)}(\lambda)(H-B))\cosh(r_{m}^{(1)}(\lambda)H)=0.
\end{multline*}

We have computed some roots of the above equation and
used these roots as exact eigenvalues to compare those
obtained with the method proposed in this paper. 
For the geometrical parameters, we have taken $A=1\, \mbox{m}$,
$B=2\,\mbox{m}$ and $H=1.25\,\mbox{m}$. 

We have used physical parameters of water and air for the density and acoustic speed of the fluids
in $\Omega_1$ and $\Omega_2$, respectively: $c_1=1430 \hspace{0.1cm}\mbox{m/s}$,
$\rho_1=1000\hspace{0.1cm}\mbox{ kg/m}^3$, $c_2=340\hspace{0.1cm}\mbox{m/s}$
and $\rho_2=1\hspace{0.1cm}\mbox{kg/m}^3$. We have used uniform meshes
as those shown in Figure~\ref{FIG:MESH}.
The refinement parameter $N$ refers to the number of elements per width of the rectangle.

\begin{figure}[H]
\begin{center}
\begin{minipage}{6.0cm}
\centerline{\includegraphics[height=5.5cm, angle=-90]{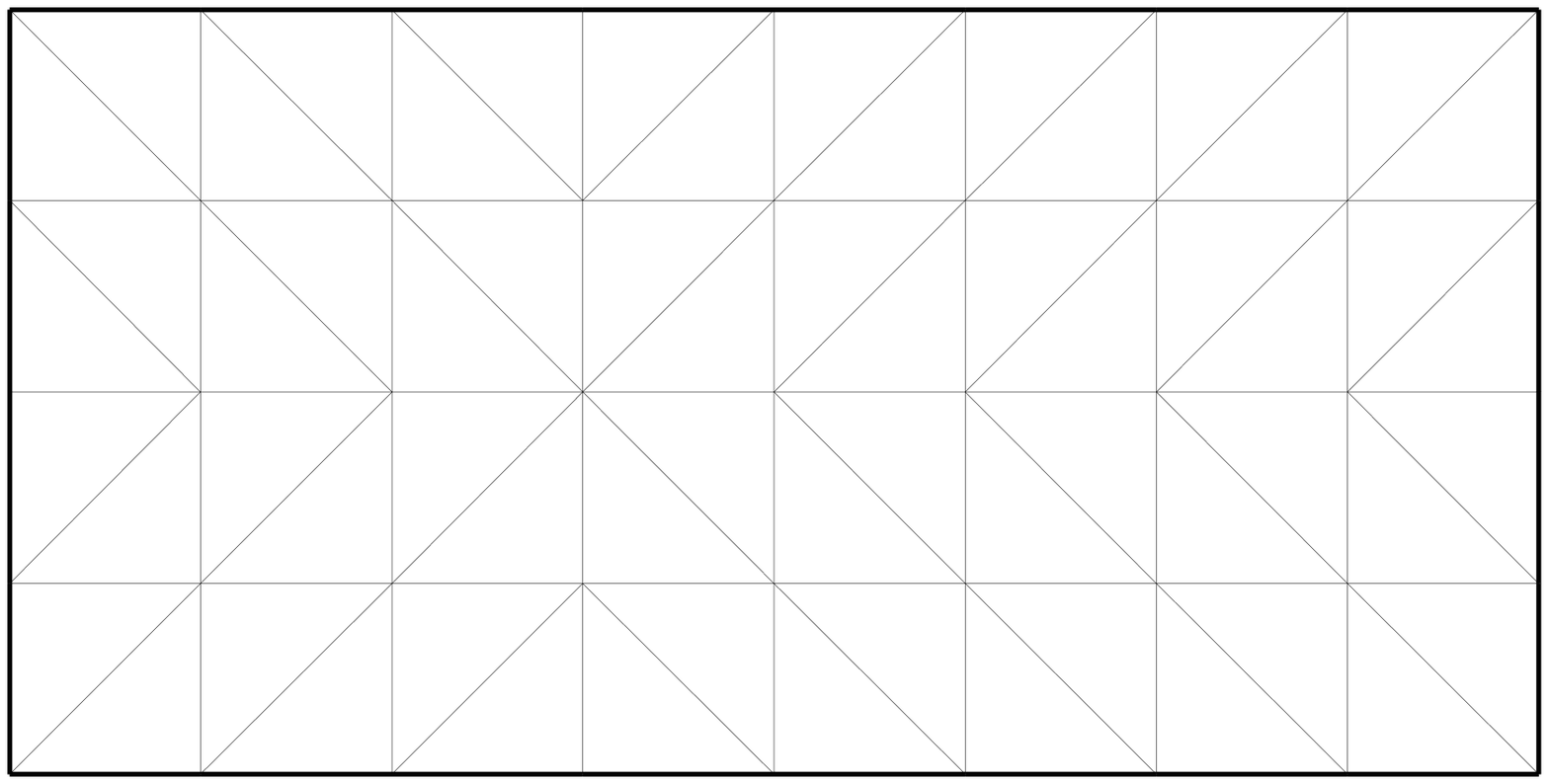}}
\end{minipage}
\begin{minipage}{6.0cm}
\centerline{\includegraphics[height=6.0cm, angle=-90]{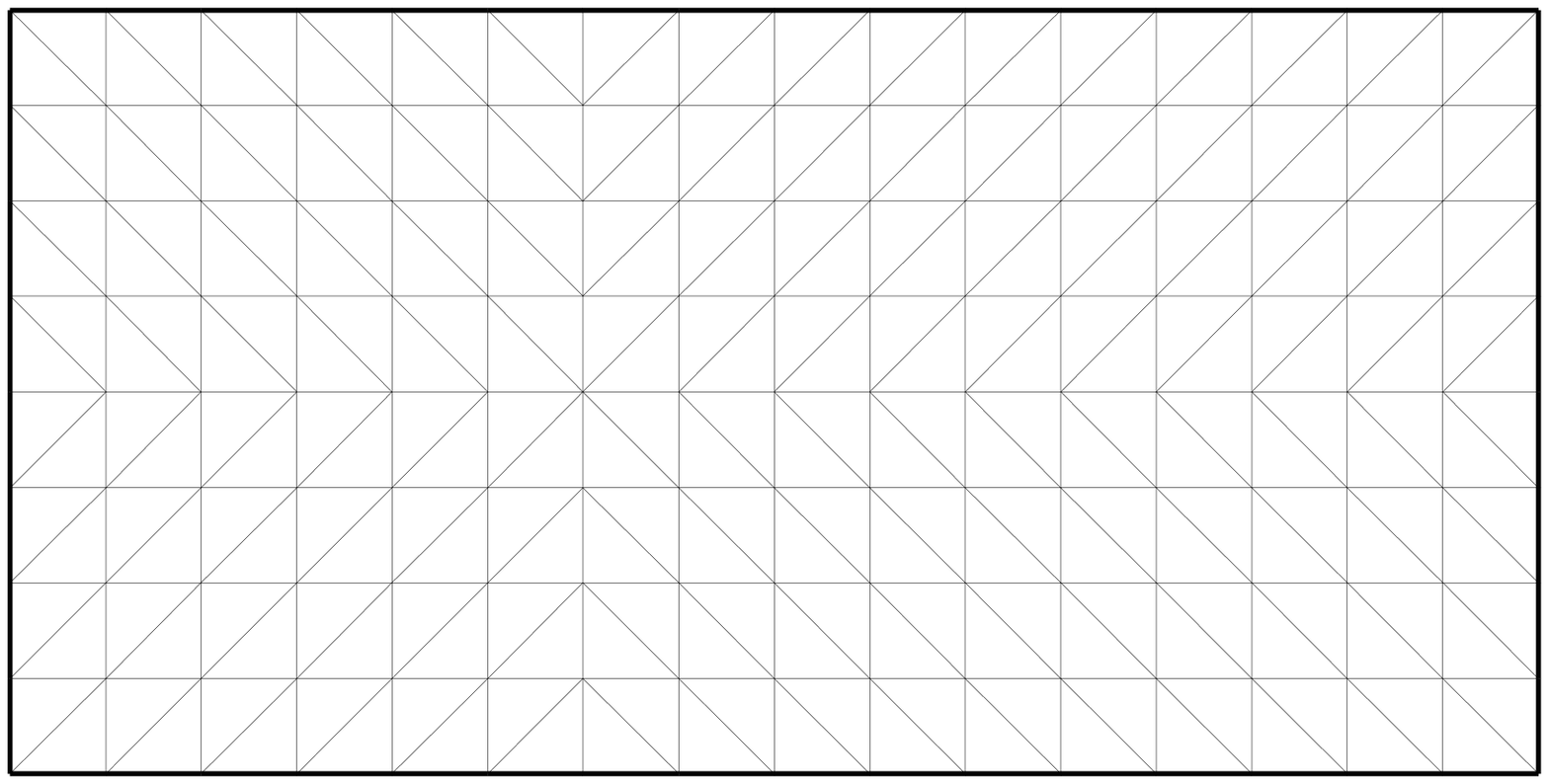}}
\end{minipage}
\end{center}
\caption{Meshes for $N=4$ (left) and $N=8$ (right).}
\label{FIG:MESH}
\end{figure}

In presence of dissipation ($\nu\ne 0$), the eigenvalues $\lambda$
 are complex numbers $\lambda=\eta+i\omega$,
with $\eta<0$ being the decay rate and $\omega$
the vibration frequency. In absence of dissipation
($\nu=0$), the eigenvalues $\lambda$ are purely imaginary ($\eta=0$).
The same holds for the computed eigenvalues $\lambda_h$.

In our first test, we neglected the viscosity damping effects
by taking $\nu_1=\nu_2=0$. In this case, the eigenvalues $\lambda$
are actually purely imaginary as can be seen from Figures~\ref{FIG:CONTOUR1}
and \ref{FIG:CONTOUR2}, which shows contour plots of $\log(|f_m(\lambda)|)$
for the smallest values of $m$ ($0\leq m\leq 3$).
Accurate values of the zeros of $f_m(\lambda)$ have been obtained
with the MATLAB command $\tt{fminsearch}$ applied to $|f_m(\lambda)|$.

\begin{figure}[H]
\begin{center}
\begin{minipage}{5.5cm}
\centerline{\includegraphics[height=5.5cm, angle=0]{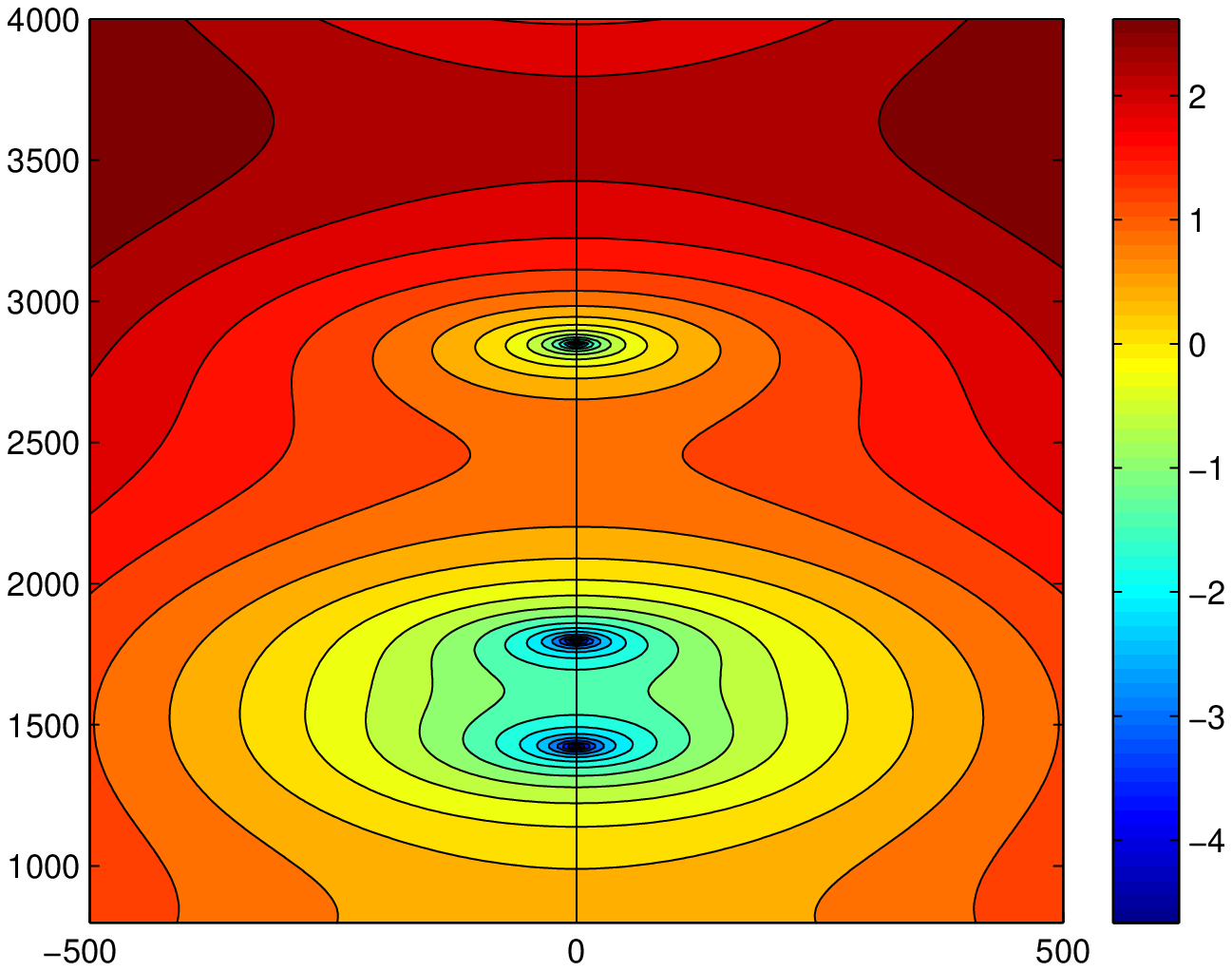}}
\centerline{$m=0$.}
\end{minipage}
\begin{minipage}{6.0cm}
\centerline{\includegraphics[height=5.5cm, angle=0]{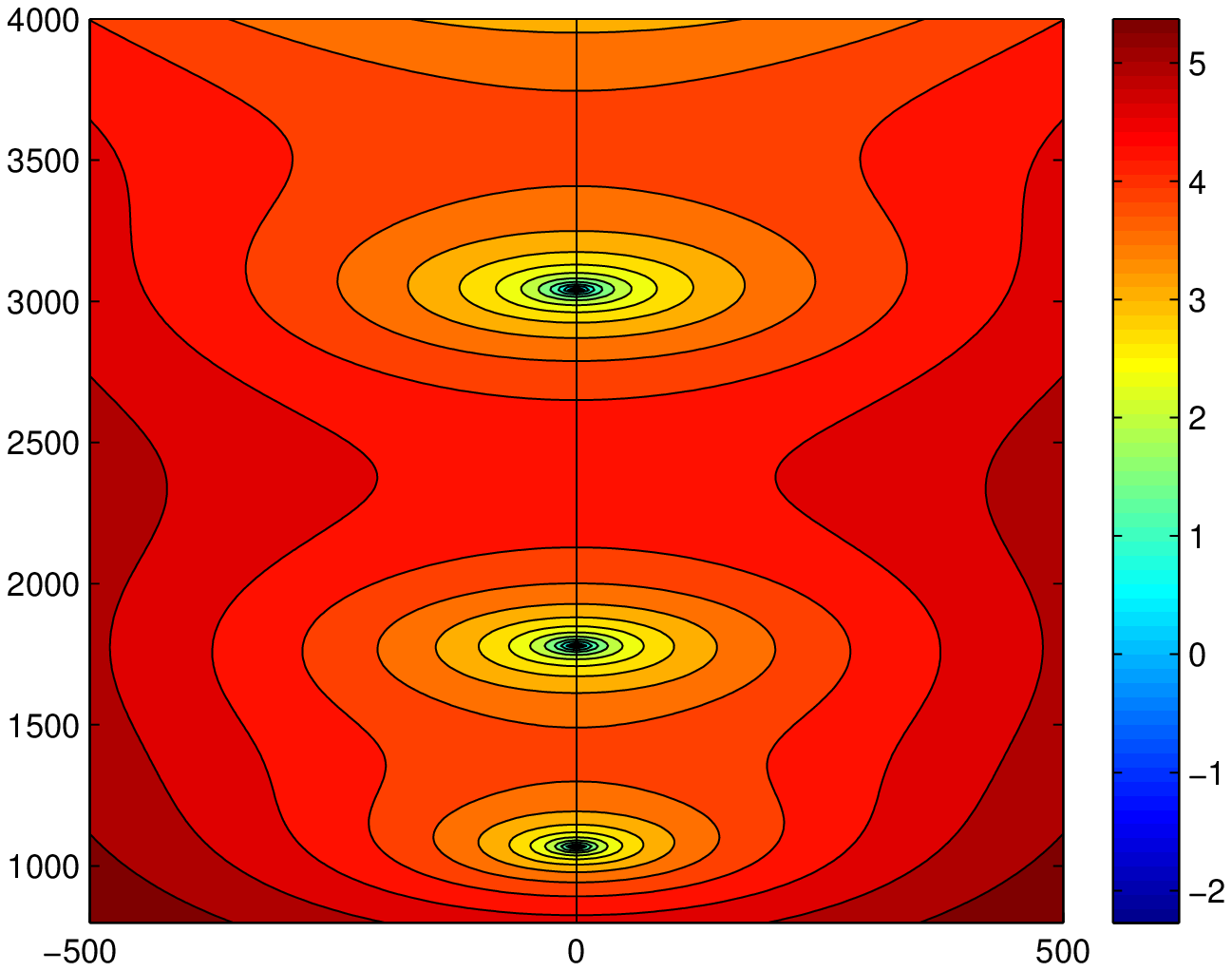}}
\centerline{$m=1$.}
\end{minipage}
\end{center}
\caption{Contour plots of $\log(|f_m(\lambda)|)$ for $m=0$ and $m=1$ with vanishing viscosity ($\nu=0$).}
\label{FIG:CONTOUR1}
\end{figure}
\hspace{-0.0cm}
\begin{figure}[H]
\begin{center}
\begin{minipage}{5.5cm}
\centerline{\includegraphics[height=5.5cm, angle=0]{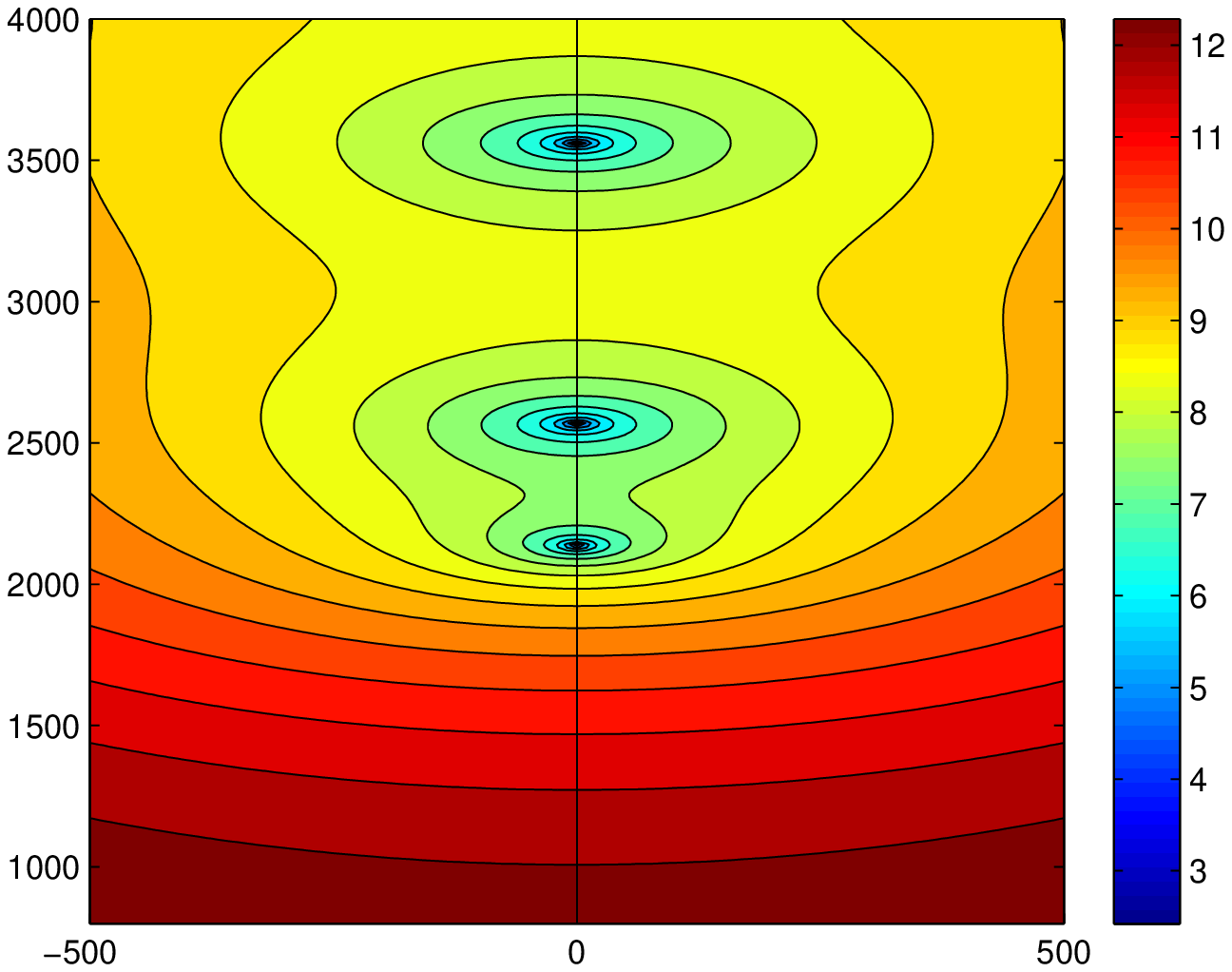}}
\centerline{$m=2$.}
\end{minipage}
\begin{minipage}{6.0cm}
\centerline{\includegraphics[height=5.5cm, angle=0]{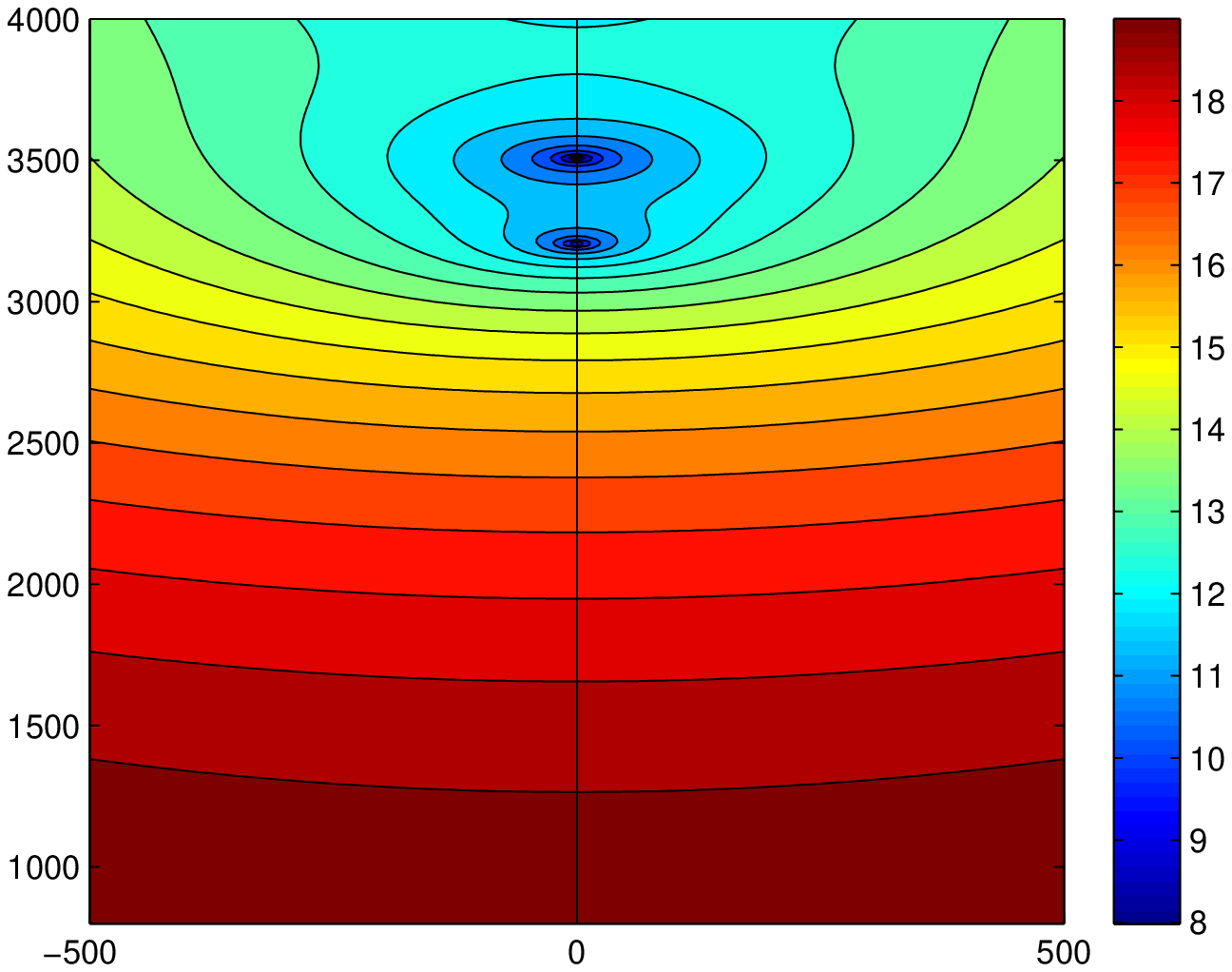}}
\centerline{$m=3$.}
\end{minipage}
\end{center}
\caption{Contour plots of $\log(|f_m(\lambda)|)$ for $m=2$ and $m=3$ with vanishing viscosity ($\nu=0$).}
\label{FIG:CONTOUR2}
\end{figure}
\hspace{-0.45cm}

Table \ref{TAB:TABLA1} shows the eigenvalues computed with the proposed
method on successively refined meshes that approximate those shown in
Figures~\ref{FIG:CONTOUR1} and \ref{FIG:CONTOUR2}. Accurate values of
the latter obtained with the MATLAB command $\tt{fminsearch}$ applied to
$|f_m(\lambda)|$ are also reported on the last line of the table as `exact'
eigenvalues. 

\begin{table}[H]
{\footnotesize
\begin{center}
\begin{tabular}{|c|c|c|c|c|c|c|}
\hline
 $m$     & $1$         & $0$        &  $1$       &$0$        &$2$         &$2$   \\ \hline
 $N=8$   & $1066.07\,i$&$1418.42\,i$&$1784.37\,i$&$1796.61\,i$&$2118.35\,i$&$2573.86\,i$ \\ 
 $N=16$  & $1067.78\,i$&$1422.52\,i$&$1781.49\,i$&$1797.09\,i$&$2131.94\,i$&$2569.90\,i$ \\
 $N=32$  & $1068.21\,i$&$1423.54\,i$&$1780.73\,i$&$1797.21\,i$&$2135.36\,i$&$2568.40\,i$ \\ 
 $N=64$  & $1068.33\,i$&$1423.79\,i$&$1780.55\,i$&$1797.23\,i$&$2136.22\,i$&$2568.09\,i$ \\ 
 Order & $2.00$      & $2.00$     &$1.99$      &$2.00$      &$1.99$      &$1.94$       \\         
 Exact & $1068.36\,i$&$1423.87\,i$&$1780.49\,i$ &$1797.24\,i$   &$2136.50\,i$&$2568.54\,i$ \\ \hline
\end{tabular}
\begin{tabular}{|c|c|c|c|c|c|}
\hline  
 $m$     & $0$         & $1$           &  $3$       &  $3$         &$2$           \\ \hline
 $N=8$   & $2807.28\,i$&$3021.26\,i$ &$3142.54\,i$ &$3492.47\,i$ &$3582.49\,i$ \\ 
 $N=16$  & $2837.76\,i$&$3037.38\,i$ &$3189.22\,i$ &$3503.56\,i$ &$3568.16\,i$ \\
 $N=32$  & $2845.75\,i$&$3041.02\,i$ &$3200.79\,i$ &$3506.19\,i$ &$3562.70\,i$ \\ 
 $N=64$  & $2848.88\,i$&$3041.89\,i$ &$3204.78\,i$ &$3506.83\,i$ &$3561.22\,i$ \\ 
 Order & $1.99$         & $2.02$      &$1.99$     &  $1.99$     &$1.94$           \\         
 Exact & $2849.56\,i$ &$3042.18\,i$&$3205.74\,i$&$3507.16\,i$   &$3560.72\,i$ \\ \hline
\end{tabular}
\caption{Computed and exact eigenvalues for dissipative fluids in a rigid cavity.}
\label{TAB:TABLA1}
\end{center}}
\end{table}

As predicted by the theory, these eigenvalues 
 are purely imaginary. The high accuracy of the
computed eigenvalues can be  observed from Table~\ref{TAB:TABLA1}
even for the coarsest mesh. We have used a least squares fitting
to estimate the convergence rate for each eigenvalue, which are also reported in Table~\ref{TAB:TABLA1}. 
A clear order $\mathcal{O}(h^2)$ can be seen in  all cases.

For the second test we have used the same physical parameters as above 
for both fluids, but considering now non vanishing viscosities.
In order to make the dissipation effects more visible, we have
used unrealistically large viscosity values:
$\nu_1=9\,\mbox{N/m$s^2$}$ and $\nu_2=1\,\mbox{N/m$s^2$}$.
We have repeated the scheme used  for the first test.
Figures \ref{FIG_CONTOUR3} and \ref{FIG_CONTOUR4} show the
localization of all the exact eigenvalues $\lambda$. Notice that
now  all $\lambda$ have  negative real parts (the decay rate) as predicted by the theory.

\begin{figure}[H]
\begin{center}
\begin{minipage}{5.5cm}
\centerline{\includegraphics[height=5.5cm, angle=0]{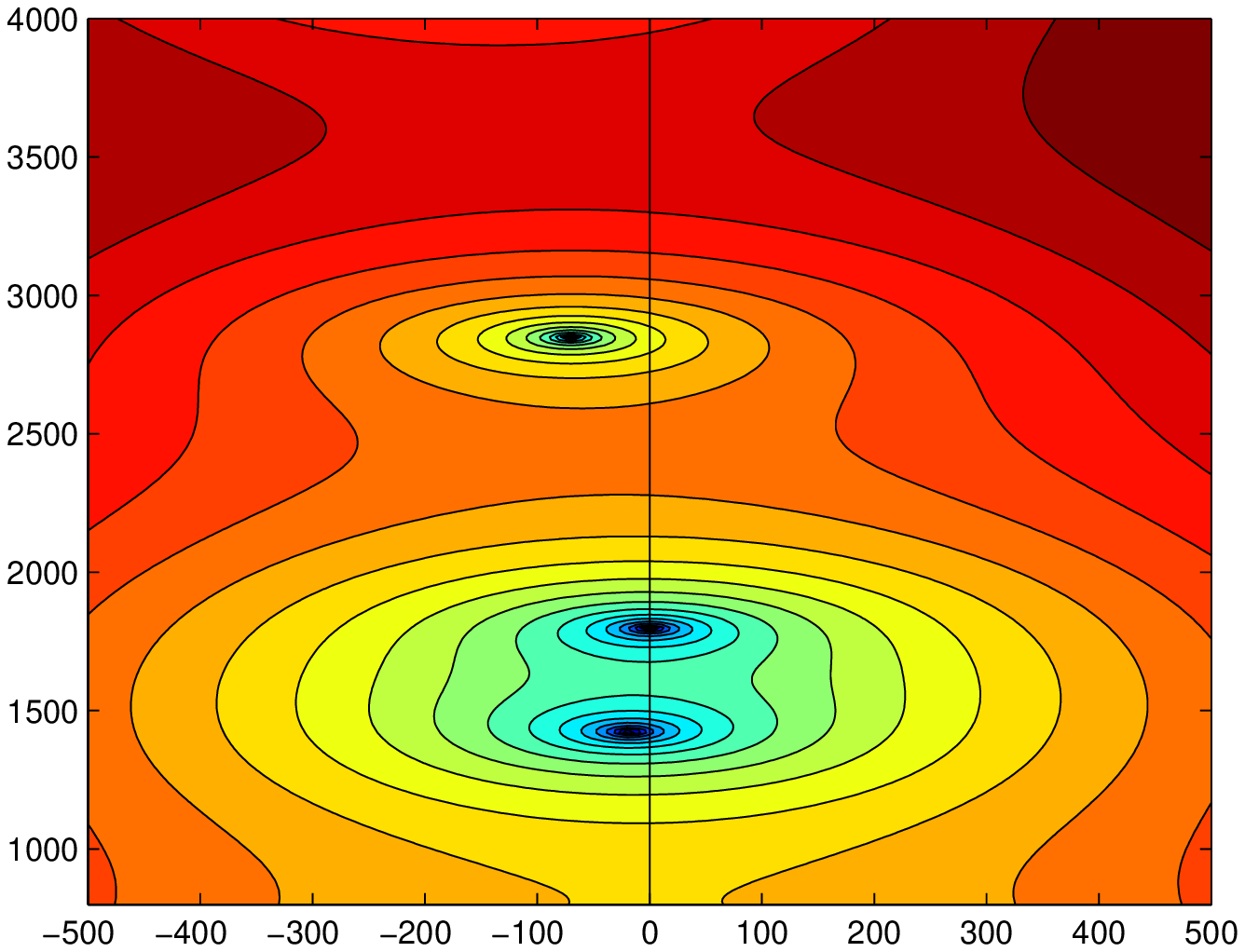}}
\centerline{$m=0$.}
\end{minipage}
\begin{minipage}{6.0cm}
\centerline{\includegraphics[height=5.5cm, angle=0]{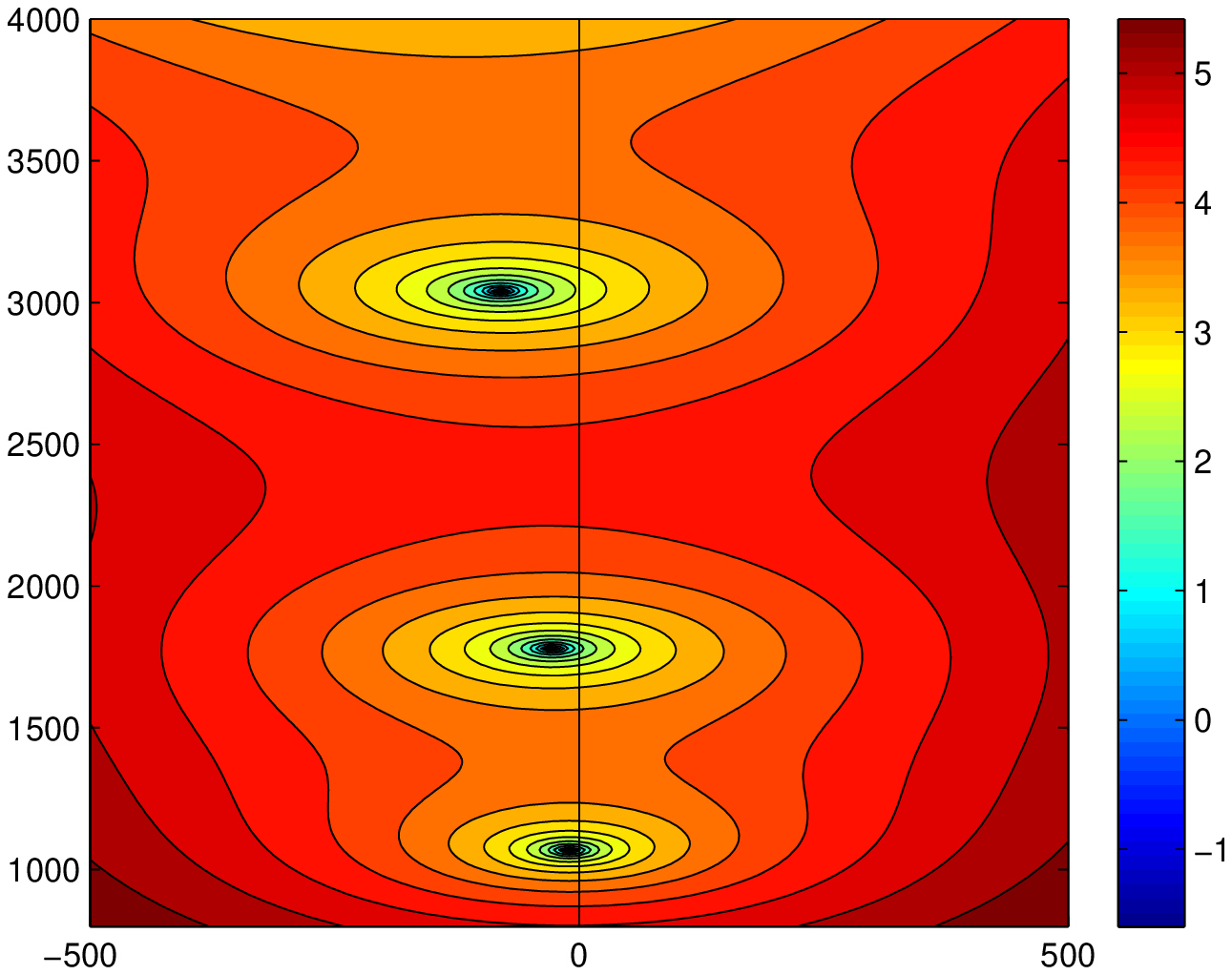}}
\centerline{$m=1$.}
\end{minipage}
\end{center}
\caption{Contour plots of $\log(|f_m(\lambda)|)$ for $m=0$ and $m=1$ with non-vanishing viscosity ($\nu\ne 0$).}
\label{FIG_CONTOUR3}
\end{figure}

\begin{figure}[H]
\begin{center}
\begin{minipage}{5.5cm}
\centerline{\includegraphics[height=5.5cm, angle=0]{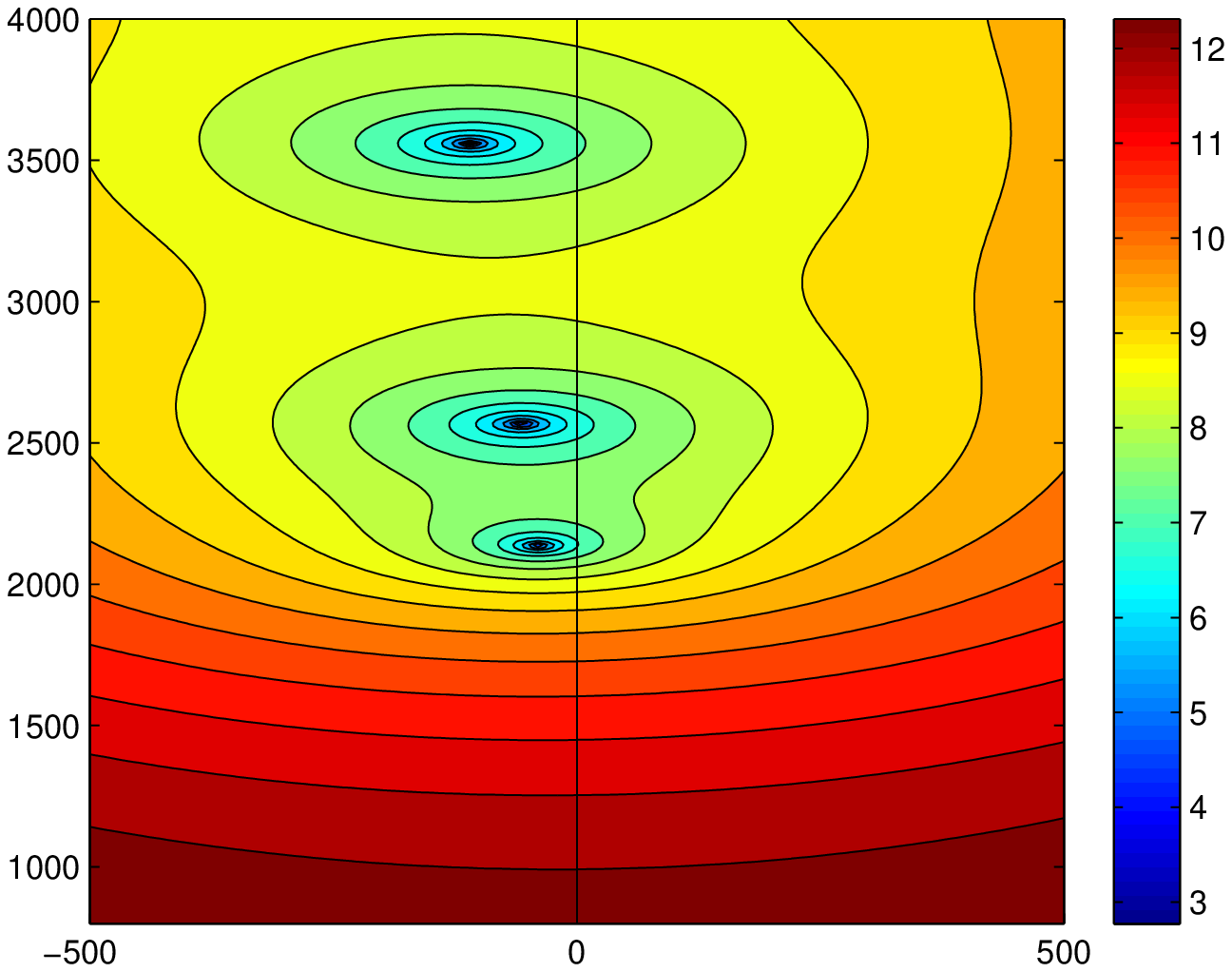}}
\centerline{$m=2$.}
\end{minipage}
\begin{minipage}{6.0cm}
\centerline{\includegraphics[height=5.5cm, angle=0]{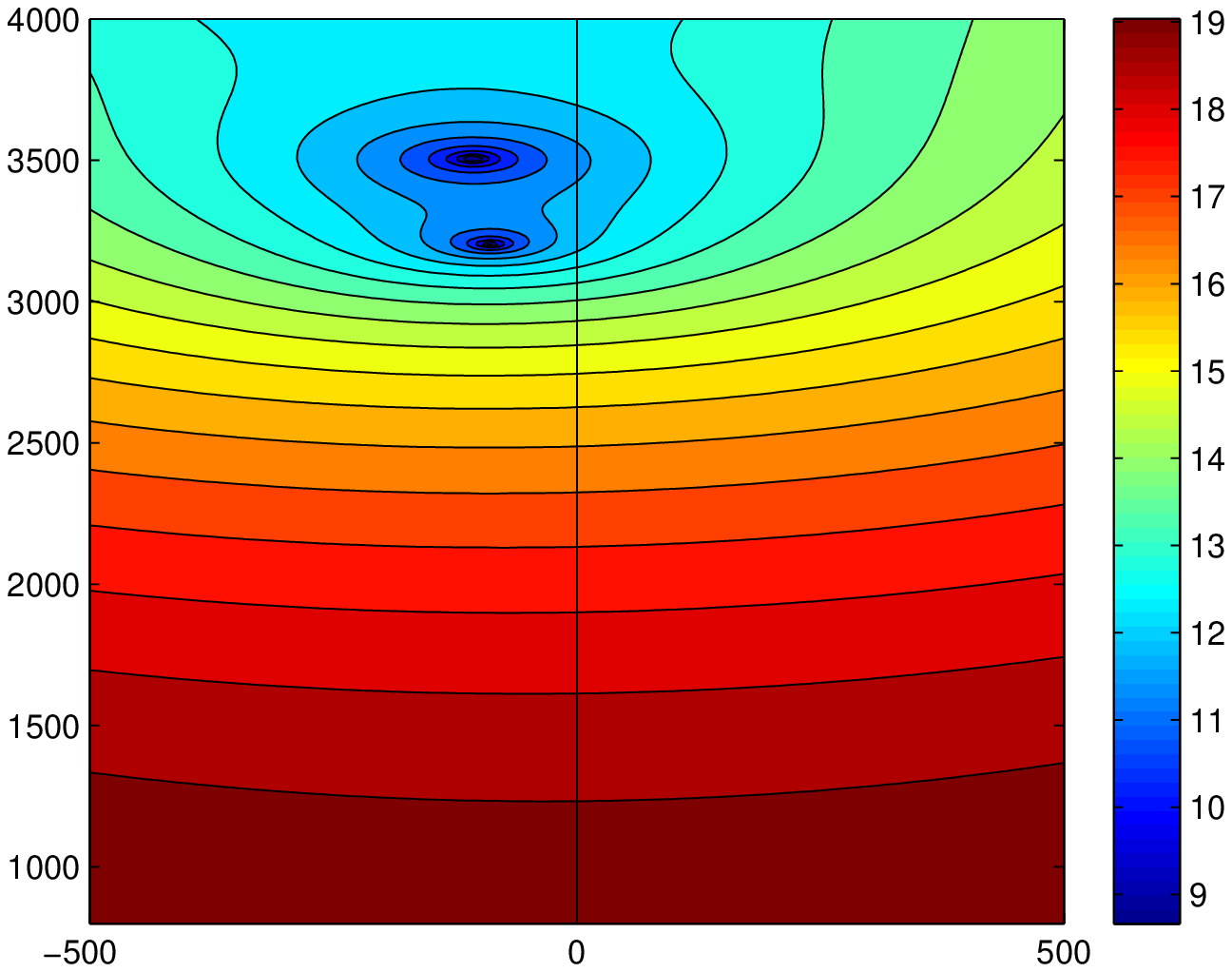}}
\centerline{$m=3$.}
\end{minipage}
\end{center}
\caption{Contour plots of $\log(|f_m(\lambda)|)$ for $m=2$ and $m=3$ with non-vanishing viscosity ($\nu\ne 0$).}
\label{FIG_CONTOUR4}
\end{figure} 
\vspace{-0.5cm}
We report in Table~\ref{TABLA2} the computed and `exact'
eigenvalues and the estimated convergence rates,
which are in accordance with the theory once again.
\vspace{-0.5cm}

\begin{table}[H]
{\footnotesize
\begin{center}
\begin{tabular}{|c|c|c|c|c|}
\hline
 $m$  &$1$                    & $0$                    & $1$                  &$0$    \\ \hline
 $N=8$ &$-9.83 + 1066.03\,i$  &$-17.39 + 1418.31\,i$   &$-27.54 + 1784.16\,i$ &$-0.05 + 1796.61\,i$       \\ 
 $N=16$&$-9.86 + 1067.74\,i$  &$-17.49 + 1422.41\,i$   &$-27.45 + 1781.27\,i$ &$-0.05 + 1797.08\,i$       \\ 
 $N=32$&$-9.87 + 1068.17\,i$  &$-17.51 + 1423.43\,i$   &$-27.43 + 1780.53\,i$ &$-0.05 + 1797.20\,i$       \\ 
 $N=64$&$-9.87 + 1068.38\,i$  &$-17.52 + 1423.78\,i$   &$-27.42 + 1780.34\,i$ &$-0.05 + 1797.23\,i$         \\ 
 Order&     $2.00$          &      $2.00$            &      $1.99$          &     $2.00$              \\ 
 Exact&$-9.87 + 1068.32\,i$ &$-17.52 + 1423.76\,i$   &$-27.42 + 1780.27\,i$ &$-0.05 + 1797.24\,i$       \\ \hline         
\end{tabular}
\begin{tabular}{|c|c|c|c|c|}
\hline
 $m$ &$2$ & $2$ & $0$ &$1$    \\ \hline
 $N=8$&$-38.82 + 2118.00\,i$   &$-57.31 + 2573.22\,i$   &$-68.17 + 2806.45\,i$ &$-78.96 + 3020.24\,i$     \\ 
 $N=16$&$-39.32 + 2131.58\,i$  &$-57.13 + 2569.26\,i$   &$-69.65 + 2836.91\,i$ &$-79.80 + 3036.34\,i$       \\ 
 $N=32$&$-39.44 + 2135.00\,i$ &$-57.06 + 2567.76\,i$   &$-70.05 + 2844.09\,i$  &$-80.00 + 3039.96\,i$      \\ 
 $N=64$&$-39.58 + 2135.95\,i$  &$-57.04 + 2567.36\,i$   &$-70.15 + 2846.92\,i$ &$-80.04 + 3040.84\,i$       \\ 
 Order &$1.99$                 &$1.94$                  &$1.99$                &       $2.02$       \\ 
 Exact&$-39.49 + 2136.14\,i$  &$-57.04 + 2567.22\,i$   &$-70.18 + 2847.60\,i$ &$-80.06 + 3041.13\,i$      \\ \hline        
\end{tabular}
\begin{tabular}{|c|c|c|c|}
\hline
 $m$     &$3$                     & $3$ & $2$      \\ \hline
 $N=8$   &$-85.43 + 3141.39\,i$   &$-105.51 + 3490.88\,i$    & $-111.02 + 3580.77\,i$       \\ 
 $N=16$  &$-87.99 + 3188.01\,i$   &$-106.18 + 3501.95\,i$    & $-110.14 + 3566.47\,i$      \\ 
 $N=32$  &$-88.62 + 3199.56\,i$   &$-106.34 + 3504.57\,i$    & $-109.80 + 3561.01\,i$     \\ 
 $N=64$  &$-88.88 +3202.45\,i$    &$-106.38 + 3505.22\,i$    & $-109.70 + 3559.53\,i$         \\ 
 Order &   $1.99$               & $1.99$                  &$1.94$       \\ 
 Exact &$-88.84 + 3203.41\,i$   &$-106.40 + 3505.44\,i$    &$ -109.68 + 3559.03\,i$     \\ \hline       
\end{tabular}
\caption{Computed and exact eigenvalues for dissipative fluids in a rigid cavity.}
\label{TABLA2}
\end{center}}
\end{table}

\end{document}